\numberwithin{equation}{section}
\newcommand{\tabcaption}{\def\@captype{table}\caption}
\newtheorem{lem}{Lemma}[section]
\newtheorem{thm}{Theorem}[section]
\newtheorem{rem}{Remark}[section]
\newtheorem{defi}{Definition}[section]
\newtheorem{exmp}{Example}[section]
\begin{document}
	\title{Robust globally divergence-free weak Galerkin finite element methods for  natural convection problems}
		\author[Han Y H et.~al]{Yihui Han, Xiaoping Xie\corrauth}
	\address{School of Mathematics, Sichuan University, Chengdu 610064, China}
	 \emails{{\tt 1335751459@qq.com} (Y. ~Han), {\tt xpxie@scu.edu.cn} (X. ~Xie)}

 \begin{abstract}
	This paper  proposes and analyzes   a class of  weak Galerkin (WG) finite element methods for  stationary  natural convection problems in  two  and three dimensions. We use piecewise polynomials of degrees $k, k-1,$ and $k$   $(k\geq 1)$ for the  velocity, pressure, and temperature approximations in the interior of elements, respectively,  and piecewise  polynomials of degrees $l, k, l $ $(l = k-1, k)$ for the numerical traces of velocity, pressure and temperature on the interfaces of elements. The methods yield globally divergence-free velocity solutions.  Well-posedness  of the discrete scheme is established,  optimal a priori error estimates are derived, and an unconditionally convergent iteration algorithm is presented. Numerical experiments  confirm the theoretical results and show   the robustness of the methods with respect to Rayleigh number.
\end{abstract}
\ams{52B10, 65D18, 68U05, 68U07}
\keywords{natural convection, Weak Galerkin method, Globally divergence-free, error estimate, Rayleigh number. }

\maketitle

\section{Introduction}
Let  $\mathbb{R}^d (d = 2, 3)$ be a polygonal or polyhedral domain with a polygonal or polyhedral subdomain $\Omega_f \subset \Omega$ and $\Omega_s := \Omega\setminus \Omega_f$, we consider the following  stationary  natural convection (or conduction-convection) problem: seek the velocity $\bm{u}=(u_1,u_2,\cdots, u_d)^T$, the pressure $p$, and the temperature $T$ such that
\begin{align}
\label{pb1}
\left \{
\begin{array}{rl}
-\Pr   \Delta \bm{u}+\nabla \cdot(\bm{u}\otimes \bm{u})+\nabla p-\Pr   Ra\bm{j}T = \bm{f}&   in \quad\Omega_f,\\
\nabla \cdot \bm{u}=0&  in\quad \Omega_f,\\
-\kappa \Delta T+\nabla \cdot(\bm{u} T) = g&  in\quad \Omega,\\
\bm{u}\equiv\bm{0} & in\quad \Omega_s\bigcup \partial\Omega_f,\\
T = 0& on\quad \partial\Omega ,
\end{array}
\right.
\end{align}
where  $\otimes$ is defined by  $\bm{u}\otimes\bm{v}=(u_iv_j)_{d\times d}$ for $\bm v=(v_1,v_2, \cdots,v_d)^T$, $\bm{j}$ is the vector of gravitational acceleration with $\bm{j}=(0,1)^T$ when $d=2$  and   $\bm{j}=(0,0,1)^T$ when $d=3$,  $\bm{f}\in[L^2(\Omega_f)]^d$, $g\in L^2(\Omega)$ are the forcing functions, and $ \Pr $, $Ra$ denote the Prandtl and Rayleigh numbers, respectively,. 

The  model problem \eqref{pb1}, arising both  in nature and in  engineering applications,  is  a coupled system of fluid flow, governed by the incompressible Navier-Stokes equations, and heat transfer, governed by the  energy equation.   Due to   its  practical significance, the development of efficient numerical methods  for   natural convection  has 
 attracted a great many of research efforts; see, e.g. 
%
%
\cite{Ben2011A},\cite{Boland1990An},\cite{boland1990error},\cite{c2001new},\cite{Mayne2000h-adaptive},\cite{de1983natural},
\cite{H1994An},\cite{JOHANNES2012A},\cite{Luo2003A},\cite{Luo2010An},\cite{manzari1999explicit},\cite{massarotti1998characteristic},\cite{Shi2009Nonconforming},\cite{Shi2010A},
\cite{Si2011A},
\cite{Zhang2014Error},\cite{Zhang2014A}. 
%
 In \cite{Boland1990An, boland1990error},    error estimates for  some finite element methods were derived in approximating 
stationary and non-stationary  natural convection problems. 
  \cite{Luo2003A, Luo2010An} applied   Petrov-Galerkin least squares mixed finite element  methods to discretize the problems. \cite{Shi2009Nonconforming,Shi2010A} developed a nonconforming mixed  element method and a  Petrov-Galerkin least squares nonconforming mixed  element method for the stationary problems. In \cite{zhang2015decoupled},   three kinds of decoupled two level finite element methods were presented. \cite{Zhang2014Error, Zhang2014A} applied  the variational multiscale method to solve the stationary and non-stationary  problems. 

In this paper, we consider a weak Galerkin (WG) finite element discretization of the  model problem \eqref{pb1}.
The WG method was first proposed and analyzed to solve second-order elliptic problems \cite{wang2013weak,Wang2014weakmixed}. It is designed by using a weakly defined gradient operator over functions with discontinuity, and then allows the use of totally discontinuous functions in the finite element procedure. 
Similar to the hybridized discontinuous Galerkin (HDG) method  \cite{Cockburn2009Unified}, the WG  method is of the property of local elimination of   unknowns defined in the interior of elements. 
 We note that in some special cases the WG method and the HDG method are equivalent  (cf. \cite{chen2016robust,chen-feng-xie2017, chen-xie2016}).  %
In \cite{chen2016robust},  a class of robust globally divergence-free weak Galerkin    methods for Stokes equations were developed, and then  were extended in \cite{Zheng2017A}  to solve   incompressible quasi-Newtonian Stokes equations.
We also refer to \cite{Chen-W-W-Y2015,Li-X2015,Deka2018wginterface,Li-X2016,Mu-W-Y-Z2015, Wang-W-Z-Z2016, Wang-Y2016, Zheng-X2017,Chen-C-X2018,zhaiqilong2018hyperbolic,wangruishu2018interface,zhangjiachuan2018WGNS} for some other developments and applications of   the WG method.

This paper aims to propose   a class of WG methods for the natural convection problems. The methods include as unknowns the  velocity, pressure, and temperature variables both  in
the interior of elements and  on the interfaces of elements. In the interior of elements,  we use piecewise polynomials of degrees $k, k-1,$ and $k$   $(k\geq 1)$ for the  velocity, pressure, and temperature approximations, respectively. On the interfaces of elements, we use  piecewise  polynomials of degrees $l, k, l $ $(l = k-1, k)$ for the numerical traces of velocity, pressure and temperature. The methods are shown to  yield globally divergence-free velocity approximations.

%

The rest of the paper is organized as follows. Section 2 introduces    the WG finite element scheme. Section 3 shows the  existence and  uniqueness of the discrete solution. Section 4 derives a priori error estimates. Section 5 discusses  the local elimination property  and the convergence of an iteration method for the WG   scheme. Finally,   Section 6 provides numerical examples to verify the  theoretical results. 

Throughout this paper, we use $a\apprle b$ $(a\apprge b)$ to denote $a\leq Cb$ $(a\geq Cb)$, where the constant C is positive independent of mesh size $h, h_K, h_e$ and the $\Pr$, $\kappa$ and Rayleigh number.

\section{WG finite element scheme}
\subsection{Notation}
For any bounded domain $D\in \mathbb{R}^s (s= d, d-1)$, let $H^m(D)$ and $H_0^m(D)$ denote the usual $m^{th}$-order Sobolev spaces on D, and  $\lVert \cdot\rVert_{m,D}, \lvert\cdot\rvert_{m,D}$ denote the norm and semi-norm on these spaces. We use  $(\cdot,\cdot)_{m,D}$ to denote the inner product of $H^m(D)$, with $(\cdot,\cdot)_D := (\cdot,\cdot)_{0,D}$. When $D = \Omega$, we set $\lVert\cdot\rVert_m := \lVert\cdot\rVert_{m,\Omega},\lvert\cdot\rvert_m :=\lvert\cdot\rvert_{m,\Omega}, $ and $(\cdot,\cdot) := (\cdot,\cdot)_\Omega
$. In particular, when $D \subset R^{d-1}$, we use $\langle\cdot,\cdot\rangle_D$ to replace $(\cdot,\cdot)_D$. For integer $k\geqslant0$,  $P_k(D)$ denotes the set of all polynomials on D with degree no more than $k$. We also need the following spaces:
\begin{center}
	$L_0^2(\Omega) :=\{q\in L^2(\Omega) :(q,1)=0\}$,\\
	$\bm{H}(div,D) :=\{\bm{v}\in \bm{L}^2(D) :\nabla\cdot\bm{v}\in L^2(D)\}.$
\end{center}

Let $\mathcal{T}_h^s$ and $\mathcal{T}_h^f$ be shape-regular simplicial decompositions of the subdomains $\Omega_s$ and $\Omega_f$, respectively. Then   $\mathcal{T}_h := \mathcal{T}_h^s\cup \mathcal{T}_h^f=\bigcup \{K\}$ is  a shape-regular simplicial decomposition of   $\Omega$. 
Let  $\varepsilon_h^s$ and  $\varepsilon_h^f$ be the sets of all edges (faces) of all elements in $\mathcal{T}_h^s$ and $\mathcal{T}_h^f$, respectively, and set $\varepsilon_h: = \varepsilon_h^s\cup \varepsilon_h^f=\bigcup\{e\}$.
 For any $K\in \mathcal{T}_h$, $e\in \varepsilon_h$,  we denote by $h_K$ and $h_e$ the diameters of $K$ and $e$, respectively, and set $h: = \max\limits_{K\in \mathcal{T}_h}h_K$. Let $\bm{n}_K$ and $\bm{n}_e$ be the outward unit normal vectors along the boundary $\partial K$ and $e$.
We denote by $\nabla_h$ and $\nabla_h\cdot$ the piecewise-defined gradient and divergence with respect to   $\mathcal{T}_h$. We also introduce the mesh-dependent inner products and mesh-dependent norms:
\begin{align*}
\langle u,v\rangle_{\partial\mathcal{T}_h} :=&\sum_{K\in \mathcal{T}_h}\langle u,v\rangle_{\partial K},\quad 
\lVert u\rVert_{0,\partial\mathcal{T}_h} :=\left(\sum_{K\in \mathcal{T}_h}\lVert u\rVert_{0,\partial K}^2\right)^{1/2} \\
(u,v)_{\mathcal{T}_h} :=&\sum_{K\in \mathcal{T}_h}(u,v)_{K},\quad\quad 
\lVert u\rVert_{0,\mathcal{T}_h} :=\left(\sum_{K\in \mathcal{T}_h}\lVert u\rVert_{0,K}^2\right)^{1/2}.
\end{align*}

\subsection{Weak problem}
We first introduce the space 
\begin{align*}
\bm{W} := \{\bm{v}\in [H_0^1(\Omega_f)]^d: \nabla\cdot \bm{v}=0\}
\end{align*}
and   the following  bilinear and  trilinear forms: for any $ \bm{u},\bm{v}, \bm{w}\in H_0^1(\Omega_f)$, $q\in L_0^2(\Omega_f)$, and $T,s\in H_0^1(\Omega)$,
\begin{align*}
a(\bm{u},\bm{v}) &:= \Pr  (\nabla\bm{u},\nabla\bm{v}),\quad
b(\bm{v},q) := (\nabla q,\bm{v}),\\
d(T,\bm{v}) &:= \Pr  Ra(\bm{j}T,\bm{v}),\quad
\overline{a}(T,s) := \kappa(\nabla T,\nabla s),\\
c(\bm{w};\bm{u},\bm{v}) &:= ((\bm{w}\cdot \nabla)\bm{u},\bm{v}), \quad
\overline{c}(\bm{w};T,s) := ((\bm{w}\cdot \nabla)T,s). 
\end{align*}
It is easy to see that, for $\bm u\in \bm W$, 
\begin{align*}
c(\bm{u};\bm{u},\bm{v}) &=  \frac{1}{2}(\nabla\cdot (\bm{u}\otimes\bm{u}),\bm{v})-\frac{1}{2}(\nabla\cdot (\bm{u}\otimes\bm{v}),\bm{u}),\\
\overline{c}(\bm{u};T,s) &= \frac{1}{2}(\nabla\cdot (\bm{u}T),s)-\frac{1}{2}(\nabla\cdot (\bm{u}s),T). 
\end{align*}
Then the variational problem of (\ref{pb1}) reads as follows:  seek $(\bm{u},p,T)\in \bm{W}\times L_0^2(\Omega_f)\times H_0^1(\Omega)$  such that
\begin{align}
\label{pbv}
\left \{
\begin{array}{rl}
A(\bm{u};\bm{u},\bm{v})+b(\bm{v},p)-b(\bm{u},q)-d(T,\bm{v})&= (\bm{f},\bm{v}), \forall (\bm{v},q)\in \bm{W}\times L_0^2(\Omega_f),\\
\overline{A}(\bm{u};T,s)&= (g,s), \forall s\in H_0^1(\Omega),
\end{array}
\right.
\end{align}
where 
\begin{align*}
A(\bm{u};\bm{u},\bm{v}) &:= a(\bm{u},\bm{v})+c(\bm{u};\bm{u},\bm{v}),\\
\overline{A}(\bm{u};T,s)&=\overline{a}(T,s)+\overline{c}(\bm{u};T,s).
\end{align*}

\begin{thm}\label{continuous exsit}
	\cite{boland1990error}For  $\bm{f}\in [H^{-1}(\Omega_f)]^d$ and $g\in H^{-1}(\Omega)$, the weak problem (\ref{pbv}) has at least one solution.
	In addition,  it admits a unique   solution $(\bm{u},p,T)\in \bm{W}\times L_0^2(\Omega_f)\times H_0^1(\Omega)$ if 
	\begin{equation*}\label{assump1}
	({\Pr}^{-1}\mathcal{N}Ra\kappa^{-1}+Ra\mathcal{M}\kappa^{-2})\lVert g\rVert_{-1}+\mathcal{N}{\Pr}^{-2}\lVert \bm{f}\rVert_{-1} < 1,	
	\end{equation*}
where \begin{align*}
\mathcal{N} := \sup\limits_{\bm{0}\neq\bm{w},\bm{u},\bm{v}\in \bm{W}}\frac{c(\bm{w};\bm{u},\bm{v})}{\lvert \bm{w}\rvert_1\lvert \bm{u}\rvert_1\lvert \bm{v}\rvert_1},\quad
\mathcal{M} := \sup\limits_{\substack{\bm{0}\neq\bm{w}\in \bm{W},\\ 0\neq T,s\in H_0^1(\Omega)}}\frac{\overline{c}(\bm{w};T,s)}{\lvert \bm{w}\rvert_1\lvert T\rvert_1\lvert s\rvert_1}.
\end{align*}

\end{thm}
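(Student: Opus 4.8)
The plan is to follow the classical Boland--Layton strategy: establish existence by a fixed-point argument posed on the divergence-free space, and then obtain uniqueness under the smallness hypothesis by an energy estimate for the difference of two solutions. First, since every $\bm v\in\bm W$ is divergence-free, the terms $b(\bm v,p)$ and $b(\bm u,q)$ vanish and (\ref{pbv}) reduces to finding $(\bm u,T)\in\bm W\times H_0^1(\Omega)$ with $a(\bm u,\bm v)+c(\bm u;\bm u,\bm v)-d(T,\bm v)=(\bm f,\bm v)$ for all $\bm v\in\bm W$ and $\overline a(T,s)+\overline c(\bm u;T,s)=(g,s)$ for all $s\in H_0^1(\Omega)$. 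Once such $(\bm u,T)$ is found, the linear functional $\bm v\mapsto(\bm f,\bm v)-a(\bm u,\bm v)-c(\bm u;\bm u,\bm v)+d(T,\bm v)$ annihilates $\bm W=\ker(\nabla\cdot)$, so the de Rham / inf-sup theorem for the Stokes pair recovers a unique $p\in L_0^2(\Omega_f)$, completing the momentum equation.

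To build $(\bm u,T)$ I would freeze the convecting velocity: given $\bm w\in\bm W$, first solve the linear convection--diffusion problem $\overline a(T,s)+\overline c(\bm w;T,s)=(g,s)$ for $T=T(\bm w)$, then the linear problem $a(\bm u,\bm v)+c(\bm w;\bm u,\bm v)=(\bm f,\bm v)+d(T(\bm w),\bm v)$ for $\bm u=\bm u(\bm w)$. Each is well-posed by Lax--Milgram, because for $\bm w\in\bm W$ the trilinear forms are skew-symmetric in their last two arguments, so $\overline c(\bm w;T,T)=0$ and $c(\bm w;\bm u,\bm u)=0$ and the principal parts $\overline a,a$ are coercive. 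A fixed point of $\mathcal F:\bm w\mapsto\bm u(\bm w)$ then solves the coupled system. The needed a priori bounds come from testing with the solution itself: $s=T$ gives $\kappa|T|_1^2=(g,T)$, hence $|T|_1\le\kappa^{-1}\|g\|_{-1}$, and $\bm v=\bm u$ together with $|d(T,\bm u)|\apprle\Pr Ra\,|T|_1|\bm u|_1$ gives $|\bm u|_1\le\Pr^{-1}\|\bm f\|_{-1}+Ra\,\kappa^{-1}\|g\|_{-1}=:R$, independent of $\bm w$. Thus $\mathcal F$ maps the closed ball of radius $R$ into itself, and Schauder's fixed-point theorem yields existence once $\mathcal F$ is verified to be continuous and compact.

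For uniqueness, let $(\bm u_i,p_i,T_i)$, $i=1,2$, be two solutions and set $\bm e=\bm u_1-\bm u_2$, $\eta=T_1-T_2$. Subtracting the temperature equations and writing $\overline c(\bm u_1;T_1,s)-\overline c(\bm u_2;T_2,s)=\overline c(\bm u_1;\eta,s)+\overline c(\bm e;T_2,s)$, the choice $s=\eta$ kills $\overline c(\bm u_1;\eta,\eta)$ and leaves $\kappa|\eta|_1^2=-\overline c(\bm e;T_2,\eta)\le\mathcal M|\bm e|_1|T_2|_1|\eta|_1$, so $|\eta|_1\le\mathcal M\kappa^{-2}\|g\|_{-1}|\bm e|_1$. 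Likewise, subtracting the velocity equations, using $c(\bm u_1;\bm u_1,\bm v)-c(\bm u_2;\bm u_2,\bm v)=c(\bm u_1;\bm e,\bm v)+c(\bm e;\bm u_2,\bm v)$ and testing with $\bm v=\bm e$ gives $\Pr|\bm e|_1^2\le\mathcal N|\bm u_2|_1|\bm e|_1^2+|d(\eta,\bm e)|$. Inserting $|d(\eta,\bm e)|\apprle\Pr Ra\,|\eta|_1|\bm e|_1$ with the bound on $|\eta|_1$, together with $|T_2|_1\le\kappa^{-1}\|g\|_{-1}$ and $|\bm u_2|_1\le\Pr^{-1}\|\bm f\|_{-1}+Ra\,\kappa^{-1}\|g\|_{-1}$, and dividing by $\Pr|\bm e|_1^2$ produces exactly
\begin{equation*}
1\le(\Pr^{-1}\mathcal N Ra\,\kappa^{-1}+Ra\,\mathcal M\kappa^{-2})\|g\|_{-1}+\mathcal N\Pr^{-2}\|\bm f\|_{-1},
\end{equation*}
which contradicts the hypothesis unless $\bm e=\bm 0$; then $\eta=0$ and $p_1=p_2$ by inf-sup.

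The main obstacle is the existence step, specifically the continuity and compactness of $\mathcal F$ rather than the algebra of uniqueness. This rests on the compact Sobolev embedding of $\bm W$ into $\bm L^2$ (and into $\bm L^4$ when $d=3$): for a bounded sequence $\bm w_n$, the images $T(\bm w_n),\bm u(\bm w_n)$ are bounded in $H^1$ and converge strongly in $L^2$ (resp.\ $L^4$) along a subsequence, after which coercivity of $a,\overline a$ upgrades the convergence to the $H^1$-seminorm, giving compactness of $\mathcal F$; continuity follows by the same passage to the limit in the weak forms. A secondary bookkeeping point is the buoyancy coupling $d(T,\bm v)=\Pr Ra(\bm j T,\bm v)$, whose natural $L^2$--$L^2$ bound must be turned into the $H^1$-seminorm estimate via a Poincaré inequality, with the relevant embedding constant absorbed so that the resulting threshold matches the stated one.
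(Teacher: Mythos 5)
The paper offers no proof of this theorem at all --- it is quoted verbatim from Boland--Layton \cite{boland1990error} --- so the only internal material to compare against is the discrete analogue in Section 3 (Theorems \ref{exist} and \ref{thm32}). Your argument is a correct reconstruction of the standard proof of the cited continuous result, and the smallness threshold you derive matches the stated one exactly. On uniqueness your argument and the paper's discrete one coincide step for step: the same splitting of the trilinear-form differences, the same test functions, the same chain of a priori bounds $\lvert T\rvert_1\le\kappa^{-1}\lVert g\rVert_{-1}$ and $\lvert\bm{u}\rvert_1\le{\Pr}^{-1}\lVert\bm{f}\rVert_{-1}+Ra\,\kappa^{-1}\lVert g\rVert_{-1}$. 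On existence you genuinely diverge: you use an Oseen-type linearization (freeze the convecting field $\bm{w}$, keep $c(\bm{w};\cdot,\cdot)$ and $\overline{c}(\bm{w};\cdot,\cdot)$ on the left, solve by Lax--Milgram via skew-symmetry) and close with Schauder on the invariant ball of radius $R$, whereas the paper's discrete proof inverts only the Stokes form $a_h$, moves the entire nonlinearity to the right-hand side, and invokes the Leray--Schauder principle, replacing your invariant-ball step by boundedness of the sets $\{\bm{v}_h=\lambda\mathcal{A}\bm{v}_h\}$. Your variant buys coercive, unconditionally solvable linear problems at each iteration; the paper's variant gets compactness essentially for free because the discrete spaces are finite-dimensional, while in your infinite-dimensional setting compactness rests on the Rellich embedding, which you correctly identify as the real crux and sketch adequately. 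The one caveat, which you already flag, is that $\lvert d(T,\bm{v})\rvert\le\Pr Ra\lVert T\rVert_0\lVert\bm{v}\rVert_0$ carries Poincar\'e constants that must be normalized into $\lVert\cdot\rVert_{-1}$ (or into $\mathcal{N}$, $\mathcal{M}$) for the threshold to read exactly as stated; the paper's discrete proof silently makes the same normalization.
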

%
%

In what follows,  we assume that the solution $(\bm{u},p,T)$ is unique and,
more precisely, there exists  a fixed  constant $\delta> 0$ such that
\begin{equation*}\label{assump2}
({\Pr}^{-1}\mathcal{N}Ra\kappa^{-1}+Ra\mathcal{M}\kappa^{-2})\lVert g\rVert_{-1}+\mathcal{N}{\Pr}^{-2}\lVert \bm{f}\rVert_{-1} < 1-\delta.
\end{equation*}

\subsection{Discrete weak operators}
In order to design a WG finite element scheme for the problem (1.1), we   introduce  the  discrete weak gradient operator $\nabla_{w,r}$ and  the discrete weak divergence operator $\nabla_{w,r}\cdot$ as follows. 
%

\begin{defi}
	For any   $K\in \mathcal{T}_h$ and $v\in \mathcal{V}(K):=\left\{\{v_0,v_b\}:v_0\in L^2(K),v_b\in H^{1/2}(\partial K)\right\}$,  the discrete weak gradient $\nabla_{w,r,K}v\in [P_r(K)]^d$ on $K$ is determined by the  equation
	\begin{equation*}\label{weak gradient}
	(\nabla_{w,r,K}v,\bm{\tau})_K= -(v_0,\nabla\cdot\bm{\tau})_K+\langle v_b,\bm{\tau} \cdot \bm{n}_K\rangle_{\partial K}\quad \forall \bm{\tau}\in [P_r(K)]^d.                           
	\end{equation*}
\end{defi}
Then we define the global discrete weak gradient operator $\nabla_{w,r}$   by
\begin{center}
	$\nabla_{w,r}|_K=\nabla_{w,r,K},\quad \forall K\in \mathcal{T}_h$.
\end{center}
For a vector $\bm{v}=(v_1,\cdots,v_d)^T\in [\mathcal{V}(K)]^d$, we define its  discrete weak gradient $\nabla_{w,r}\bm{v}$ by  
$$\nabla_{w,r}\bm{v}:=(\nabla_{w,r}v_1,\cdots,\nabla_{w,r}v_d)^T.$$

\begin{defi}
	For any  $K\in \mathcal{T}_h$ and $\bm{v}\in \mathcal{W}(K):=\left\{\{\bm{v}_0,\bm{v}_b\}:\bm{v}_0\in [L^2(K)]^d,\bm{v}_b\cdot\bm{n}_K\in H^{-1/2}(\partial K)\right\}$, the discrete weak divergence $\nabla_{w,r,K}\cdot \bm{v}\in P_r(K)$ is determined by the equation
	\begin{equation*}\label{weak divergence}
	(\nabla_{w,r,K}\cdot \bm{v},\tau)_K= -(\bm{v}_0,\nabla\tau)_K+\langle \bm{v}_b\cdot \bm{n}_K,\tau \rangle_{\partial K}\quad \forall \tau\in P_r(K).                           
	\end{equation*}
\end{defi}
Then we define the global discrete weak divergence operator $\nabla_{w,r}\cdot$ by
$$\nabla_{w,r}\cdot|_K=\nabla_{w,r,K}\cdot,\forall K\in \mathcal{T}_h.$$ 
For a tensor $\bm w=(\bm w_1,\cdots, \bm w_d)^T\in [\mathcal{W}(K)]^{d\times d}$ with $\bm w_i\in [\mathcal{W}(K)]^{d}$ for $i=1,\cdots,d$, we define its discrete weak divergence $\nabla_{w,r}\cdot \bm w$ by
$$ \nabla_{w,r}\cdot \bm w=( \nabla_{w,r}\cdot \bm w_1,\cdots,  \nabla_{w,r}\cdot \bm w_d)^T.$$

\subsection{WG finite element scheme}
For any $K\in \mathcal{T}_h,e\in \varepsilon_h$ and any integer $j\geq 0$, let $Q_j^0:L^2(K)\rightarrow P_j(K)$ and $Q^b_j:L^2(e)\rightarrow P_j(e)$ be the usual $L^2$ projection operators. We shall use $\bm{Q}_j^b$ to denote $Q^b_j$ for vector spaces.  

For any integer $k\geq 1$ and $l = k-1,k$, we introduce the following finite dimensional  spaces:
\begin{align*}
\bm{V}_h&=\{\bm{v}_h=\{\bm{v}_{h0},\bm{v}_{hb}\}:\bm{v}_{h0}|_K\in [P_k(K)]^d,\bm{v}_{hb}|_e\in [P_l(e)]^d,\forall K\in \mathcal{T}_h,\forall e\in \varepsilon_h\},\\
\bm{V}_h^0&=\{\bm{v}_h=\{\bm{v}_{h0},\bm{v}_{hb}\}\in \bm{V}_h:\bm{v}_{hb}|_{\partial \Omega_f}=0\},\\
Q_h&=\{q_h=\{q_{h0},q_{hb}\}:q_{h0}|_K\in P_{k-1}(K),q_{hb}|_e\in P_k(e),\forall K\in \mathcal{T}_h,\forall e\in \varepsilon_h\},\\
Q_h^0&=\{q_h=\{q_{h0},q_{hb}\}\in Q_h:q_{h0}\in L_0^2(\Omega_f)\},\\
S_h&=\{s_h=\{s_{h0},s_{hb}\}:s_{h0}|_K\in P_k(K),s_{hb}|_e\in P_l(e),\forall K\in \mathcal{T}_h,\forall e\in \varepsilon_h\},\\
S_h^0&=\{s_h=\{s_{h0},s_{hb}\}\in S_h:s_{hb}|_{\partial \Omega}=0\}.
\end{align*}

For  any $\bm{u}_h =\{\bm{u}_{h0},\bm{u}_{hb}\}, \bm{v}_h =\{\bm{v}_{h0},\bm{v}_{hb}\}\in \bm{V}_h^0$, $q_h = \{q_{h0},q_{hb}\}\in Q_h^0$, and $T_h = \{T_{h0},T_{hb}\}, s_h = \{s_{h0},s_{hb}\}\in S_h^0$, define the following bilinear and trilinear forms:
\begin{align*}
a_h(\bm{u}_h,\bm{v}_h) &:= \Pr  (\nabla_{w,m}\bm{u}_h,\nabla_{w,m}\bm{v}_h)+\Pr  \langle \tau(\bm{Q}_l^b\bm{u}_{h0}-\bm{u}_{hb}),\bm{Q}_l^b\bm{v}_{h0}-\bm{v}_{hb}\rangle_{\partial \mathcal{T}_h^f},\\
b_h(\bm{v}_h,q_h) &:= (\nabla_{w,k}q_h,\bm{v}_{h0}),\\
d_h(T_h,\bm{v}_h) &:= \Pr  Ra(\bm{j}T_{h0},\bm{v}_{h0}),\\
\overline{a}_h(T_h,s_h) &:= \kappa(\nabla_{w,m}T_h,\nabla_{w,m}s_h)+\kappa\langle \tau(Q_l^bT_{h0}-T_{hb}),Q_l^bs_{h0}-s_{hb}\rangle_{\partial \mathcal{T}_h},\\
c_h(\bm{w}_h;\bm{u}_h,\bm{v}_h) &:= \frac{1}{2}(\nabla_{w,k}\cdot (\bm{u}_h\otimes\bm{w}_h),\bm{v}_{h0})-\frac{1}{2}(\nabla_{w,k}\cdot (\bm{v}_h\otimes\bm{w}_h),\bm{u}_{h0}),\\
\overline{c}_h(\bm{u}_h;T_h,s_h) &:= \frac{1}{2}(\nabla_{w,k}\cdot (\bm{u}_hT_h),s_{h0})-\frac{1}{2}(\nabla_{w,k}\cdot (\bm{u}_hs_h),T_{h0}).
\end{align*}
It is easy to see that 
\begin{equation}
	 c_h(\bm{w}_h;\bm{v}_h,\bm{v}_h) =0,\quad 
	  \overline{c}_h(\bm{w}_h;s_h,s_h)  =0.\label{3356}
\end{equation}

The WG finite element scheme for (\ref{pb1}) is then given as follows: seek  $\bm{u}_h = \{\bm{u}_{h0},\bm{u}_{hb}\}\in \bm{V}_h^0$, $p_h = \{p_{h0},p_{hb}\}\in Q_h^0$, and $T_h =\{T_{h0},T_{hb}\}\in S_h^0$ such that
\begin{align}
\label{pb2}
\left \{
\begin{array}{rl}
A_h(\bm{u}_h;\bm{u}_h,\bm{v}_h)+b_h(\bm{v}_h,p_h)-b_h(\bm{u}_h,q_h)-d_h(T_h,\bm{v}_h)&= (\bm{f},\bm{v}_{h0}), \forall (\bm{v}_h,q_h)\in \bm{V}_h^0\times Q_h^0,\\
\overline{A}_h(\bm{u}_h;T_h,s_h)&= (g,s_{h0}), \forall s_h\in S_h^0,
\end{array}
\right.
\end{align}
where 
\begin{align}
A_h(\bm{w}_h;\bm{u}_h,\bm{v}_h) &:= a_h(\bm{u}_h,\bm{v}_h)+c_h(\bm{w}_h;\bm{u}_h,\bm{v}_h),\\
\overline{A}_h(\bm{u}_h;T_h,s_h)&:=\overline{a}_h(T_h,s_h)+\overline{c}_h(\bm{u}_h;T_h,s_h),\label{224}
\end{align}
$\tau|_{\partial T}=h_T^{-1}$, and m is an integer with $k-1\leq m\leq l$.

\begin{rem}
	It's easy to show that the scheme (\ref{pb2}) yields globally divergence-free velocity approximation $\bm{u}_{h0}$. In fact, let $K_1,K_2\in \mathcal{T}_h$ be any two adjacent elements with a common face $e$, introduce a function $r_{hb}\in L^2(\varepsilon_h)$ with
	\begin{equation*} 
	r_{hb}|_e=\left\{
	\begin{array}{l}
	-(\bm{u}_{h0}\cdot \bm{n}_e)|_{K_1\bigcap e}-(\bm{u}_{h0}\cdot \bm{n}_e)|_{K_2\bigcap e},\quad \forall e\in \varepsilon_h^f/\partial \Omega_f,\\
	0,\quad \forall e\in \partial\Omega_f,
	\end{array}
	\right.
	\end{equation*}
	and set $c_0:=\frac{1}{|\Omega_f|} \int_{\Omega_f} \nabla_h\cdot \bm{u}_{h0}d\bm{x}$. Then, taking $(\bm{v}_{h0},\bm{v}_{hb},q_{h0},q_{hb},s_{h0},s_{hb})=(\bm{0},\bm{0},\nabla_h\cdot \bm{u}_{h0}-c_0,r_{hb}-c_0,0,0) $ 
	 in (\ref{pb2}) yields
	\begin{equation*}
	\lVert \nabla_h\cdot \bm{u}_{h0}\rVert_0^2 +\sum_{e\in \varepsilon_h^f/\partial \Omega_f}\lVert (\bm{u}_{h0}\cdot \bm{n}_e)|_{K_1}+(\bm{u}_{h0}\cdot \bm{n}_e)|_{K_2}\rVert_{0,e}^2=0.
	\end{equation*}
	This indicates $\bm{u}_{h0}\in\bm{H}(div,\Omega_f)$ and $\nabla_h\cdot \bm{u}_{h0}=\nabla\cdot \bm{u}_{h0}=0$, i.e.  the velocity approximation $\bm{u}_{h0}$ is globally divergence-free  in a pointwise sense.
\end{rem}

\section{Well-posedness of the discrete scheme}
\subsection{Some basic results}
For the projections $Q_j^0$ and $Q_j^b$ with $j\geq 0$, the following stability and approximation results are standard.

\begin{lem}(\cite{shi2013finite})
\label{ineq} 
	Let $s$ be an integer with $1\leq s\leq j+1$. Then we have, for any $K\in \mathcal{T}_h$ and $e\in \varepsilon_h$, 
	\begin{align*}
	\lVert v-Q_j^0v\rVert_{0,K}+h_K\lvert v-Q_j^0v\rvert_{1,K}&\apprle h_K^s\lvert v\rvert_{s,K},\forall v\in H^s(K),\\
	\lVert v-Q_j^0v\rVert_{0,\partial K}&\apprle h_K^{s-1/2}\lvert v\rvert_{s,K},\forall v\in H^s(K),\\
	\lVert v-Q_j^bv\rVert_{0,\partial K}&\apprle h_K^{s-1/2}\lvert v\rvert_{s,K},\forall v\in H^s(K),\\
	\lVert Q_j^0v\rVert_{0,K}&\leq \lVert v\rVert_{0,K},\forall v\in L^2(K),\\
	\lVert Q_j^bv\rVert_{0,e}&\leq \lVert v\rVert_{0,e},\forall v\in L^2(e).
	\end{align*}
\end{lem}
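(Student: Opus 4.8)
The plan is to split the five estimates into three groups. The two stability bounds $\lVert Q_j^0 v\rVert_{0,K}\leq\lVert v\rVert_{0,K}$ and $\lVert Q_j^b v\rVert_{0,e}\leq\lVert v\rVert_{0,e}$ are immediate and I would dispose of them first: $Q_j^0$ and $Q_j^b$ are orthogonal projections in $L^2(K)$ and $L^2(e)$ respectively, hence non-expansive, so nothing needs to be scaled.

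For the interior approximation estimate (first line) I would use the classical combination of the Bramble--Hilbert lemma with an affine scaling argument. Fix an affine map $F_K:\hat K\to K$, $F_K(\hat x)=B_K\hat x+b_K$, from a fixed reference simplex $\hat K$, and write $\hat v:=v\circ F_K$. Shape-regularity gives $\lVert B_K\rVert\apprle h_K$, $\lVert B_K^{-1}\rVert\apprle h_K^{-1}$, with $\lvert\det B_K\rvert$ comparable to $h_K^d$. The key structural fact is that the $L^2$ projection commutes with this pullback, i.e. $\widehat{Q_j^0 v}=\hat Q_j^0\hat v$, because $\lvert\det B_K\rvert$ is constant on $K$ and polynomial degree is preserved; in particular $\hat Q_j^0$ reproduces every $\hat p\in P_{s-1}(\hat K)\subseteq P_j(\hat K)$ since $s-1\leq j$. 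The Bramble--Hilbert lemma then yields $\lVert\hat v-\hat Q_j^0\hat v\rVert_{0,\hat K}\apprle\lvert\hat v\rvert_{s,\hat K}$ and $\lvert\hat v-\hat Q_j^0\hat v\rvert_{1,\hat K}\apprle\lvert\hat v\rvert_{s,\hat K}$. Mapping back with the chain-rule estimates $\lvert\hat v\rvert_{s,\hat K}\apprle\lVert B_K\rVert^{s}\lvert\det B_K\rvert^{-1/2}\lvert v\rvert_{s,K}$ and the corresponding bounds for the $L^2$- and $H^1$-seminorms of $v-Q_j^0 v$, the factors $\lvert\det B_K\rvert^{\pm 1/2}$ cancel and the powers of $h_K$ combine to give $\lVert v-Q_j^0 v\rVert_{0,K}\apprle h_K^s\lvert v\rvert_{s,K}$ together with $h_K\lvert v-Q_j^0 v\rvert_{1,K}\apprle h_K^s\lvert v\rvert_{s,K}$.

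For the first trace estimate (second line) I would invoke the scaled trace inequality $\lVert w\rVert_{0,\partial K}\apprle h_K^{-1/2}\lVert w\rVert_{0,K}+h_K^{1/2}\lvert w\rvert_{1,K}$ valid for $w\in H^1(K)$, apply it to $w=v-Q_j^0 v$, and insert the two interior bounds just proved; both resulting terms are of order $h_K^{s-1/2}\lvert v\rvert_{s,K}$. The third estimate then follows from the second with no new work: on each face $e\subset\partial K$ the restriction $(Q_j^0 v)|_e$ lies in $P_j(e)$, so the best-approximation property of $Q_j^b$ in $L^2(e)$ gives $\lVert v-Q_j^b v\rVert_{0,e}\leq\lVert v-(Q_j^0 v)|_e\rVert_{0,e}=\lVert v-Q_j^0 v\rVert_{0,e}$, and summing squares over the faces of $K$ reduces the third line to the second.

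I expect the only genuine subtlety to be the $H^1$-seminorm contribution $h_K\lvert v-Q_j^0 v\rvert_{1,K}$: because $Q_j^0$ is an $L^2$- rather than an $H^1$-projection, its gradient cannot be controlled term-by-term, and one must either carry the estimate on the reference element as above or route through an inverse inequality against a separate interpolant. Apart from this point the argument is routine bookkeeping of powers of $h_K$ under affine scaling, and the constant hidden in $\apprle$ depends only on the shape-regularity of $\mathcal{T}_h$ and on $j,s,d$ — which is exactly the uniformity in $\Pr$, $\kappa$ and the Rayleigh number needed for the later error analysis.
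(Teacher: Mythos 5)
Your argument is correct and is the standard one: the paper itself offers no proof of this lemma, citing it directly from \cite{shi2013finite}, and your Bramble--Hilbert/affine-scaling treatment of the interior estimate, the scaled trace inequality for the second line, the best-approximation comparison $\lVert v-Q_j^bv\rVert_{0,e}\leq\lVert v-(Q_j^0v)|_e\rVert_{0,e}$ for the third, and non-expansiveness of orthogonal projections for the last two are exactly the textbook route. In particular your key structural observation that $Q_j^0$ commutes with the affine pullback (so the projection on $K$ is the projection on $\hat K$ of the pulled-back function) is what makes the Bramble--Hilbert step legitimate, and the constants indeed depend only on shape regularity, $j$, $s$, and $d$, as the paper's uniformity claims require.
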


By using the trace theorem, the inverse inequality, and scaling arguments metioned in {\cite{shi2013finite}}, we can get the following lemma. 
\begin{lem}\label{trace inequality}
	For all $K\in \mathcal{T}_h$, $w\in H^1(K)$, and $1\leq \tilde{q}\leq \infty$, we have
	\begin{equation*}
	\lVert w\rVert_{0,\tilde{q},\partial K}\apprle h_K^{-\frac{1}{\tilde{q}}}\lVert w\rVert_{0,\tilde{q},K} + h_K^{1-\frac{1}{\tilde{q}}}\lvert w\rvert_{1,\tilde{q},K}, 
	\end{equation*}
	In particular, for all   $w\in  P_k(K)$, 
    \begin{equation*}
    \lVert w\rVert_{0,\tilde{q},\partial K}\apprle h_K^{-\frac{1}{\tilde{q}}}\lVert w\rVert_{0,\tilde{q},K}.
    \end{equation*}
\end{lem}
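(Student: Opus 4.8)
The plan is to pull everything back to a fixed reference simplex by an affine change of variables and then invoke the classical trace theorem there. First I would fix the reference simplex $\widehat{K}$ and, for each $K\in\mathcal{T}_h$, write the affine map $F_K(\widehat{\bm{x}})=B_K\widehat{\bm{x}}+\bm{b}_K$ with $F_K(\widehat{K})=K$. Shape-regularity of $\mathcal{T}_h$ gives $\lVert B_K\rVert\apprle h_K$, $\lVert B_K^{-1}\rVert\apprle h_K^{-1}$, and $|\det B_K|$ comparable to $h_K^{d}$ (i.e. both $\apprle h_K^d$ and $\apprge h_K^d$). Setting $\widehat{w}:=w\circ F_K$ and using $\nabla w=B_K^{-T}\widehat{\nabla}\widehat{w}$ together with the change-of-variables formula (with surface Jacobian of order $h_K^{d-1}$ on the $(d-1)$-dimensional faces), the scaling relations I need are $\lVert\widehat{w}\rVert_{0,\tilde{q},\widehat{K}}\apprle h_K^{-d/\tilde{q}}\lVert w\rVert_{0,\tilde{q},K}$, $|\widehat{w}|_{1,\tilde{q},\widehat{K}}\apprle h_K^{1-d/\tilde{q}}|w|_{1,\tilde{q},K}$, and $\lVert w\rVert_{0,\tilde{q},\partial K}\apprle h_K^{(d-1)/\tilde{q}}\lVert\widehat{w}\rVert_{0,\tilde{q},\partial\widehat{K}}$.

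Next, on the fixed reference element I would apply the trace theorem $\lVert\widehat{w}\rVert_{0,\tilde{q},\partial\widehat{K}}\apprle\lVert\widehat{w}\rVert_{0,\tilde{q},\widehat{K}}+|\widehat{w}|_{1,\tilde{q},\widehat{K}}$. Multiplying through by $h_K^{(d-1)/\tilde{q}}$ and inserting the three scaling relations, the powers of $h_K$ collapse exactly as desired: the first term on the right becomes $h_K^{(d-1)/\tilde{q}-d/\tilde{q}}=h_K^{-1/\tilde{q}}$ in front of $\lVert w\rVert_{0,\tilde{q},K}$, while the second becomes $h_K^{(d-1)/\tilde{q}+1-d/\tilde{q}}=h_K^{1-1/\tilde{q}}$ in front of $|w|_{1,\tilde{q},K}$. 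This yields the first assertion $\lVert w\rVert_{0,\tilde{q},\partial K}\apprle h_K^{-1/\tilde{q}}\lVert w\rVert_{0,\tilde{q},K}+h_K^{1-1/\tilde{q}}|w|_{1,\tilde{q},K}$.

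For the polynomial case $w\in P_k(K)$ I would supplement this with an inverse estimate on $\widehat{K}$: since $P_k(\widehat{K})$ is finite-dimensional, all norms on it are equivalent, so $|\widehat{w}|_{1,\tilde{q},\widehat{K}}\apprle\lVert\widehat{w}\rVert_{0,\tilde{q},\widehat{K}}$; transforming back through the same scaling relations gives $|w|_{1,\tilde{q},K}\apprle h_K^{-1}\lVert w\rVert_{0,\tilde{q},K}$. Substituting this into the general bound absorbs the gradient term into the first, leaving $\lVert w\rVert_{0,\tilde{q},\partial K}\apprle h_K^{-1/\tilde{q}}\lVert w\rVert_{0,\tilde{q},K}$, which is the second assertion.

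The one genuinely delicate point is the uniformity of the hidden constants over the whole range $1\le\tilde{q}\le\infty$, since the trace and inverse constants on $\widehat{K}$ could in principle degenerate as $\tilde{q}\to\infty$. I would dispose of the endpoint $\tilde{q}=\infty$ directly, where the claim reduces to $\lVert w\rVert_{0,\infty,\partial K}\le\lVert w\rVert_{0,\infty,K}$ (the supremum over $\partial K$ being bounded by that over $\overline{K}$), so both inequalities are trivial there; for $1\le\tilde{q}<\infty$ I would invoke the $\tilde{q}$-explicit form of the trace and inverse estimates on $\widehat{K}$ from the cited reference, whose constants stay bounded. All remaining steps are routine bookkeeping of the scaling exponents once the affine map and its shape-regularity bounds are fixed.
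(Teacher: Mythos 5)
Your proposal is correct and follows essentially the same route the paper takes: the paper derives this lemma precisely by combining the trace theorem on a reference element, the inverse inequality for polynomials, and affine scaling arguments (citing \cite{shi2013finite}), which is exactly what you carry out in detail. The scaling exponents and the treatment of the endpoint $\tilde{q}=\infty$ all check out.
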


%

%

\begin{lem}(\cite{chen2016robust}) \label{lem3.3}
	Let $0\leq k-1\leq m\leq l\leq k$. For all $K\in \mathcal{T}_h$ and $\bm{v}_h=\{\bm{v}_{h0},\bm{v}_{hb}\}\in [P_k(K)]^d\times [P_l(\partial K)]^d$, the following estimates hold:
	\begin{align}
	\lVert \nabla\bm{v}_{h0}\rVert_{0,K}&\apprle \lVert \nabla_{w,m}\bm{v}_h\rVert_{0,K}+h_K^{-1/2}\lVert \bm{Q}_l^b\bm{v}_{h0}-\bm{v}_{hb}\rVert_{0,\partial K},\label{equivalentu1}\\
	\lVert \nabla_{w,m}\bm{v}_h\rVert_{0,K}&\apprle \lVert \nabla\bm{v}_{h0}\rVert_{0,K}+h_K^{-1/2}\lVert \bm{Q}_l^b\bm{v}_{h0}-\bm{v}_{hb}\rVert_{0,\partial K},\label{equivalentu2}
	\end{align}
	\end{lem}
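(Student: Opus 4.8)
The plan is to reduce everything to the scalar, componentwise statement, since $\nabla_{w,m}\bm{v}_h$ is defined component by component and the norms split accordingly; it thus suffices to prove both bounds for a scalar pair $v=\{v_0,v_b\}\in P_k(K)\times P_l(\partial K)$ with $v_0$ playing the role of a component of $\bm{v}_{h0}$. The cornerstone is a single integration-by-parts identity. Starting from the definition of $\nabla_{w,m,K}v$ and integrating by parts on the term $-(v_0,\nabla\cdot\bm{\tau})_K$, one gets $(\nabla_{w,m,K}v,\bm{\tau})_K=(\nabla v_0,\bm{\tau})_K-\langle v_0-v_b,\bm{\tau}\cdot\bm{n}_K\rangle_{\partial K}$. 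Now I use the hypothesis $m\le l$: on each face $e\subset\partial K$ the trace $\bm{\tau}\cdot\bm{n}_K$ is a polynomial of degree $\le m\le l$, so it is orthogonal to $v_0-Q_l^bv_0$; replacing $v_0$ by its projection yields the working identity
\[
(\nabla_{w,m,K}v,\bm{\tau})_K=(\nabla v_0,\bm{\tau})_K-\langle Q_l^bv_0-v_b,\bm{\tau}\cdot\bm{n}_K\rangle_{\partial K},\quad\forall\bm{\tau}\in[P_m(K)]^d.
\]

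For the first estimate \eqref{equivalentu1}, I exploit the other hypothesis $m\ge k-1$: since $\nabla v_0\in[P_{k-1}(K)]^d\subseteq[P_m(K)]^d$, it is an admissible test function. Choosing $\bm{\tau}=\nabla v_0$ in the identity gives $\|\nabla v_0\|_{0,K}^2=(\nabla_{w,m,K}v,\nabla v_0)_K+\langle Q_l^bv_0-v_b,(\nabla v_0)\cdot\bm{n}_K\rangle_{\partial K}$. I then apply Cauchy--Schwarz to both terms, bound $\|(\nabla v_0)\cdot\bm{n}_K\|_{0,\partial K}\le\|\nabla v_0\|_{0,\partial K}$, and invoke the inverse trace inequality of \cref{trace inequality} (polynomial case, $\tilde q=2$) to get $\|\nabla v_0\|_{0,\partial K}\apprle h_K^{-1/2}\|\nabla v_0\|_{0,K}$. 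This produces $\|\nabla v_0\|_{0,K}^2\apprle(\|\nabla_{w,m}v\|_{0,K}+h_K^{-1/2}\|Q_l^bv_0-v_b\|_{0,\partial K})\|\nabla v_0\|_{0,K}$, and dividing by $\|\nabla v_0\|_{0,K}$ (the case $\nabla v_0=0$ being trivial) yields \eqref{equivalentu1}.

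The second estimate \eqref{equivalentu2} is entirely symmetric: I now test the identity with $\bm{\tau}=\nabla_{w,m,K}v\in[P_m(K)]^d$, obtaining $\|\nabla_{w,m,K}v\|_{0,K}^2=(\nabla v_0,\nabla_{w,m,K}v)_K-\langle Q_l^bv_0-v_b,(\nabla_{w,m,K}v)\cdot\bm{n}_K\rangle_{\partial K}$, and apply the same Cauchy--Schwarz plus inverse-trace argument, this time to the polynomial $\nabla_{w,m,K}v$, before dividing out $\|\nabla_{w,m}v\|_{0,K}$. The only genuine subtlety, and the step I would flag as the crux, is the degree bookkeeping encoded in $k-1\le m\le l$: the upper bound $m\le l$ is exactly what lets $Q_l^b$ act transparently on the boundary pairing, while the lower bound $m\ge k-1$ is exactly what makes $\nabla v_0$ a legal test tensor in \eqref{equivalentu1}. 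Everything else is routine Cauchy--Schwarz and the scaling inverse-trace inequality, and all constants are mesh-independent by shape regularity.
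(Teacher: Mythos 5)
Your proof is correct. The paper does not prove this lemma itself but simply cites it from \cite{chen2016robust}; your argument --- integrating by parts in the definition of $\nabla_{w,m,K}$, using $m\le l$ to insert $Q_l^b$ into the boundary pairing, testing with $\bm{\tau}=\nabla v_0$ (legal since $m\ge k-1$) for one direction and with $\bm{\tau}=\nabla_{w,m,K}v$ for the other, and finishing with Cauchy--Schwarz and the scaled trace inequality --- is exactly the standard proof given in that reference, and your degree bookkeeping and componentwise reduction are both sound.
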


We introduce the following semi-norms: for any $(\bm{v}_h,q_h,s_h)\in \bm{V}_h\times Q_h\times S_h$, 
\begin{align*}
\interleave\bm{v}_h\interleave^2:=& \lVert\nabla_{w,m}\bm{v}_h\rVert_0^2+ \lVert\tau^{1/2}(\bm{Q}_l^b\bm{v}_{h0}-\bm{v}_{hb})\rVert_{0,\partial\mathcal{T}_h^f}^2,\\
\lVert q_h\rVert^2:=&\lVert q_{h0}\rVert_0^2+\sum_{K\in\mathcal{T}_h^f}\lVert\nabla_{w,k}q_h\rVert_{0,K}^2,\\
\interleave s_h\interleave^2:=& \lVert\nabla_{w,m} s_h\rVert_0^2+ \lVert\tau^{1/2}(Q_l^bs_{h0}-s_{hb})\rVert_{0,\partial\mathcal{T}_h}^2.
\end{align*}
Here we recall that  $\tau|_{\partial K}=h_K^{-1}$. It is easy to see that the above three semi-norms are norms on $\bm{V}_h^0$, $Q_h^0$ and $S_h^0$, respectively (cf. \cite{chen2016robust}).
In addition, from the lemma above it follows
	\begin{equation}\label{1seminorm v}
	\lVert \nabla_h\bm{v}_{h0}\rVert_0\apprle \interleave \bm{v}_h\interleave,\quad \forall \bm{v}_h\in \bm{V}_h^0.
	\end{equation}
	
\begin{rem} We note that the   estimates \eqref{equivalentu1}, \eqref{equivalentu2}, and \eqref{1seminorm v} also hold  for all $s_h\in S_h^0$ due to the fact that  $s_h|_K=\{s_{h0},s_{hb}\}\in P_k(K)\times P_l(\partial K)$.
\end{rem}
	
\begin{lem}(\cite{karakashian2007convergence})
	For all $s_{h0}\in S_{h0} = \{s_{h0}:s_{h0}|_K\in P_k(K),\forall K\in \mathcal{T}_h\}$, there exists an interpolation 
	$I_ks_{h0}\in S_{h0}\cap H_0^1(\Omega)$ such that
	\begin{align*}
	\sum_{K\in \mathcal{T}_h}\lVert s_{h0}-I_ks_{h0}\rVert_{0,K}^2&\apprle \sum_{e\in \varepsilon_h} h_e\lVert [s_{h0}]\rVert_{0,e}^2,\\
	\sum_{K\in \mathcal{T}_h}\lVert \nabla(s_{h0}-I_ks_{h0})\rVert_{0,K}^2&\apprle \sum_{e\in \varepsilon_h}h_e^{-1}\lVert [s_{h0}]\rVert_{0,e}^2.
	\end{align*}
	\end{lem}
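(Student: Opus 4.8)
The plan is to construct $I_k$ as a nodal-averaging (Oswald-type) interpolation operator and then estimate the difference element by element through scaling arguments. Let $\{a_i\}$ denote the Lagrange nodal points defining the degree-$k$ continuous finite element space $S_{h0}\cap H_0^1(\Omega)$ (vertices together with the appropriate edge/face/interior points). At each node $a_i$ interior to a single element $K$ the value of $s_{h0}$ is single-valued, so I set $(I_ks_{h0})(a_i):=s_{h0}(a_i)$. At each node $a_i$ shared by several elements I define $(I_ks_{h0})(a_i)$ to be the arithmetic average of the values $s_{h0}|_K(a_i)$ taken over all elements $K$ containing $a_i$, and at each node lying on $\partial\Omega$ I set $(I_ks_{h0})(a_i):=0$. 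By construction $I_ks_{h0}$ is single-valued at every node, hence continuous, and vanishes on $\partial\Omega$, so $I_ks_{h0}\in S_{h0}\cap H_0^1(\Omega)$.

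Next I would pass to reference-element scaling. On each $K$ the function $w_K:=(s_{h0}-I_ks_{h0})|_K$ is a polynomial of degree at most $k$, determined by its nodal values $w_K(a_i)$, $a_i\in\overline{K}$. Mapping to a fixed reference element and invoking equivalence of norms on the finite-dimensional polynomial space, one obtains the standard bounds
\[
\lVert w_K\rVert_{0,K}^2\apprle h_K^d\sum_{a_i\in\overline{K}}|w_K(a_i)|^2,\qquad
\lVert\nabla w_K\rVert_{0,K}^2\apprle h_K^{d-2}\sum_{a_i\in\overline{K}}|w_K(a_i)|^2.
\]
Since $w_K$ vanishes at every interior node of $K$, only nodes lying on $\partial K$ (shared with neighbours or sitting on $\partial\Omega$) contribute to these sums.

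The core step is to express each surviving nodal value as a controlled combination of jumps. For a node $a_i\in\partial K$ shared with neighbours, $w_K(a_i)=s_{h0}|_K(a_i)-(\text{average over the elements meeting at }a_i)$ is a bounded linear combination of pairwise differences $s_{h0}|_K(a_i)-s_{h0}|_{K'}(a_i)$, each of which equals the pointwise value $[s_{h0}](a_i)$ of a jump across a face adjacent to $a_i$ (either the common face of $K$ and $K'$, or a short chain of faces encircling $a_i$ whose length is uniformly bounded by shape-regularity). For a node on $\partial\Omega$, $w_K(a_i)=s_{h0}|_K(a_i)$ is likewise the boundary jump value under the convention $[s_{h0}]|_e=s_{h0}|_e$ for $e\subset\partial\Omega$. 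A further norm equivalence on each face $e$ gives $|[s_{h0}](a_i)|^2\apprle h_e^{-(d-1)}\lVert[s_{h0}]\rVert_{0,e}^2$ for $a_i\in e$. Inserting this into the scaling bounds and using shape-regularity ($h_K\approx h_e$ for faces of $K$) yields, through the elementary power counts $h_K^d\,h_e^{-(d-1)}\approx h_e$ and $h_K^{d-2}\,h_e^{-(d-1)}\approx h_e^{-1}$,
\[
\lVert w_K\rVert_{0,K}^2\apprle\sum_{e\subset\partial K}h_e\lVert[s_{h0}]\rVert_{0,e}^2,\qquad
\lVert\nabla w_K\rVert_{0,K}^2\apprle\sum_{e\subset\partial K}h_e^{-1}\lVert[s_{h0}]\rVert_{0,e}^2.
\]
Summing over $K\in\mathcal{T}_h$ and noting that each face belongs to at most two elements delivers the two asserted estimates.

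The hard part will be the combinatorial bookkeeping at shared nodes: writing the deviation of a local nodal value from the average as a controlled sum of face jumps, and verifying that the number of elements and faces meeting any node is uniformly bounded so that the hidden constants remain independent of the mesh. Once the reference-element norm equivalences and shape-regularity are granted, the scaling estimates and the final summation are routine.
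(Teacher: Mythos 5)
The paper gives no proof of this lemma; it simply cites Karakashian--Pascal, and your Oswald-type nodal-averaging construction together with the reference-element scaling, the face inverse estimate $|[s_{h0}](a_i)|^2\apprle h_e^{-(d-1)}\lVert[s_{h0}]\rVert_{0,e}^2$, and the telescoping of nodal deviations into face jumps is precisely the argument used there. Your outline is correct; the only points that need the care you already flag are the uniformly bounded chain of faces around a shared node (guaranteed by shape regularity) and the boundary convention $[s_{h0}]|_e=s_{h0}|_e$ for $e\subset\partial\Omega$, which is needed so that the zero boundary values of $I_ks_{h0}$ are absorbed into the right-hand side.
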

From this lemma it follows that, for all $\bm{v}_{h0}\in \bm{V}_{h0} = \{\bm{v}_{h0}:\bm{v}_{h0}|_K\in P_k(K),\forall K\in \mathcal{T}_h^f\}$, there exists an interpolation  $\bm{I}_k\bm{v}_{h0}\in \bm{V}_{h0}\cap [H_0^1(\Omega)]^d$ such that
	\begin{align}
	\sum_{K\in \mathcal{T}_h}\lVert \bm{v}_{h0}-\bm{I}_k\bm{v}_{h0}\rVert_{0,K}^2&\apprle \sum_{e\in \varepsilon_h} h_e\lVert [\bm{v}_{h0}]\rVert_{0,e}^2,\label{bmI_k}\\
	\sum_{K\in \mathcal{T}_h}\lVert \nabla(\bm{v}_{h0}-\bm{I}_k\bm{v}_{h0})\rVert_{0,K}^2&\apprle \sum_{e\in \varepsilon_h}h_e^{-1}\lVert [\bm{v}_{h0}]\rVert_{0,e}^2.\label{bmI_ksemi}
	\end{align}


\begin{lem}\label{discrete sobolev embedding}
	For all $\bm{v}_h\in \bm{V}_h^0$ and $s_h\in S_h^0$, we have  
	\begin{align}
	\lVert \bm{v}_{h0}\rVert_{0,\tilde{q}}&\leq  C_{\tilde{q}1}\interleave \bm{v}_h\interleave,\label{ineq 11}\\
	\lVert s_{h0}\rVert_{0,\tilde{q}}&\leq  C_{\tilde{q}2}\interleave s_h\interleave,\label{ineq 12}
	\end{align}
	where  $2\leq \tilde{q}< \infty$ when $d = 2$, $2 \leq \tilde{q}\leq 6$   when $d = 3$,  and $C_{\tilde{q}1}$, $C_{\tilde{q}2}$ are positive constants only depending on $\tilde{q}$.
    \end{lem}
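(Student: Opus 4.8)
The plan is to transfer the continuous Sobolev embedding $[H_0^1(\Omega)]^d\hookrightarrow[L^{\tilde q}(\Omega)]^d$, valid precisely for $2\le\tilde q<\infty$ when $d=2$ and $2\le\tilde q\le 6$ when $d=3$, to the broken polynomial function $\bm v_{h0}$. Since $s_h|_K=\{s_{h0},s_{hb}\}\in P_k(K)\times P_l(\partial K)$ has exactly the same structure as $\bm v_h$ (see the preceding Remark), it suffices to prove \eqref{ineq 11}, and \eqref{ineq 12} will follow verbatim. First I would invoke the conforming interpolant $\bm I_k$ from the previous lemma and split $\bm v_{h0}=\bm I_k\bm v_{h0}+(\bm v_{h0}-\bm I_k\bm v_{h0})$, where $\bm I_k\bm v_{h0}\in\bm V_{h0}\cap[H_0^1(\Omega)]^d$. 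For the conforming piece the continuous embedding gives $\lVert\bm I_k\bm v_{h0}\rVert_{0,\tilde q}\le C_{\tilde q}\lVert\nabla\bm I_k\bm v_{h0}\rVert_0$, and a triangle inequality combined with \eqref{bmI_ksemi} and \eqref{1seminorm v} reduces $\lVert\nabla\bm I_k\bm v_{h0}\rVert_0$ to $\interleave\bm v_h\interleave$ plus the jump functional $\sum_{e\in\varepsilon_h^f}h_e^{-1}\lVert[\bm v_{h0}]\rVert_{0,e}^2$.

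Next I would bound this jump functional by $\interleave\bm v_h\interleave^2$. Because $\bm v_{hb}$ is single-valued on each interior face $e=\partial K_1\cap\partial K_2$, one has $\lVert[\bm v_{h0}]\rVert_{0,e}\le\lVert\bm v_{h0}|_{K_1}-\bm v_{hb}\rVert_{0,e}+\lVert\bm v_{h0}|_{K_2}-\bm v_{hb}\rVert_{0,e}$, so it suffices to control $h_K^{-1}\lVert\bm v_{h0}-\bm v_{hb}\rVert_{0,\partial K}^2$ for each $K$. When $l=k$ this coincides with the penalty term of $\interleave\cdot\interleave$, since $\bm Q_k^b\bm v_{h0}=\bm v_{h0}$ on $\partial K$. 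When $l=k-1$ I would insert $\bm Q_l^b\bm v_{h0}$ and estimate the extra projection error $\lVert\bm v_{h0}-\bm Q_{k-1}^b\bm v_{h0}\rVert_{0,\partial K}\apprle h_K^{1/2}\lVert\nabla\bm v_{h0}\rVert_{0,K}$ by Lemma \ref{ineq} (with $s=1$), which is absorbed into $\lVert\nabla_h\bm v_{h0}\rVert_0\apprle\interleave\bm v_h\interleave$. This settles the conforming contribution.

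The crux is the nonconforming remainder $\bm e_h:=\bm v_{h0}-\bm I_k\bm v_{h0}$ measured in $L^{\tilde q}$. Here I would apply an inverse inequality elementwise, $\lVert\bm e_h\rVert_{0,\tilde q,K}\apprle h_K^{-\sigma}\lVert\bm e_h\rVert_{0,K}$ with $\sigma=d(\tfrac12-\tfrac1{\tilde q})\ge0$, then pass from $\ell^{\tilde q}$ to $\ell^2$ summation (legitimate since $\tilde q\ge2$) and invoke the $L^2$ super-approximation estimate \eqref{bmI_k}, which supplies a compensating factor $h_e^{+1}$. Using shape-regularity $h_K\simeq h_e$, the two powers combine into an overall factor $h^{\,1-\sigma}$ in front of $\interleave\bm v_h\interleave$. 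The main obstacle — and the very reason the range of $\tilde q$ is restricted — is to check that this exponent is nonnegative: $1-\sigma\ge0$ is equivalent to $\tilde q\le\frac{2d}{d-2}$, i.e.\ to $\tilde q<\infty$ for $d=2$ and $\tilde q\le 6$ for $d=3$, exactly the admissible range. Under this condition $h^{\,1-\sigma}$ stays bounded and the remainder is controlled by $\interleave\bm v_h\interleave$. Combining the conforming and nonconforming bounds yields \eqref{ineq 11} with a constant depending only on $\tilde q$ (through the continuous embedding constant), and \eqref{ineq 12} follows identically.
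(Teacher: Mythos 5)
Your proposal is correct and follows essentially the same route as the paper: split $\bm v_{h0}$ via the conforming interpolant $\bm I_k$, apply the continuous Sobolev embedding to $\bm I_k\bm v_{h0}$ with the jump functional absorbed into $\interleave\bm v_h\interleave$ through the single-valuedness of $\bm v_{hb}$ and the $\bm Q_l^b$ projection estimate, and control the nonconforming remainder by an inverse inequality whose loss $h^{-d(1/2-1/\tilde q)}$ is compensated by the $O(h)$ super-approximation, with the admissible range of $\tilde q$ determined by the sign of the combined exponent. The only (immaterial) difference is that the paper routes the remainder estimate through the projection-mean operator $\bm\Pi_h$ in $W^{1,2}$ rather than applying the inverse inequality directly to $\bm v_{h0}-\bm I_k\bm v_{h0}$ in $L^2$ via \eqref{bmI_k}; both hinge on the identical exponent condition.
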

\begin{proof}
	 For all $\bm{v}_h\in \bm{V}_h^0$, we apply  the Sobolev embedding theorem and Poinc\'{a}re inequality to get
	\begin{equation}\label{318}
	\lVert \bm{I}_k\bm{v}_{h0}\rVert_{0,\tilde{q}}\apprle \lVert \bm{I}_k\bm{v}_{h0}\rVert_1 \apprle \lVert \nabla\bm{I}_k\bm{v}_{h0}\rVert_0.
	\end{equation}
	From (\ref{bmI_ksemi}), (\ref{1seminorm v}), 
	 the definition of $ \interleave\cdot \interleave$, and the projection property of $\bm{Q}_l^b$, it follows
	\begin{equation}\label{319}
	\begin{aligned}
	\lVert \nabla\bm{I}_k\bm{v}_{h0}\rVert_0 &\apprle \lVert \nabla_h \bm{v}_{h0}\rVert_0 + \big(\sum_{e\in \varepsilon_h}h_e^{-1}\lVert [\bm{v}_{h0}]\rVert_{0,e}^2\big)^{\frac{1}{2}}\\
	&\apprle \interleave \bm{v}_h\interleave + \big(\sum_{e\in \varepsilon_h}h_e^{-1}\lVert [\bm{v}_{h0}-\bm{v}_{hb}]\rVert_{0,e}^2\big)^{\frac{1}{2}}\\
	&\apprle \interleave \bm{v}_h\interleave.
	\end{aligned}
	\end{equation}
Using  the Sobolev embedding theorem and  the inverse inequality once again, by the properties of the projection-mean operator (\cite{shi2013finite}) $\bm{\Pi_h}:\bm{V}_{h0} = \{\bm{v}_{h0}:\bm{v}_{h0}|_K\in P_k(K),\forall K\in \mathcal{T}_h^f\}\rightarrow W^{1,2}(\Omega_f)\cap W^{0,\tilde{q}}(\Omega_f)$ and the fact that $2\leq \tilde{q}< \infty$ when $d = 2$ and  $2 \leq \tilde{q}\leq 6$   when $d = 3$, we have
	\begin{equation*}
	\begin{aligned}
	\lVert \bm{v}_{h0}-\bm{I}_k\bm{v}_{h0}\rVert_{0,\tilde{q}} &\leq \lVert \bm{v}_{h0}-\bm{\Pi_h}\bm{v}_{h0}\rVert_{0,\tilde{q}}+\lVert \bm{\Pi_h}\bm{v}_{h0}-\bm{I}_k\bm{v}_{h0}\rVert_{0,\tilde{q}}\\
	&\apprle h\lVert \nabla_h\bm{v}_{h0}\rVert_{0,\tilde{q}} +\lVert \bm{\Pi_h}\bm{v}_{h0}-\bm{I}_k\bm{v}_{h0}\rVert_{1,2} \\
	&\apprle h^{1-(\frac{d}{2}-\frac{d}{\tilde{q}})}\lVert \nabla_h\bm{v}_{h0}\rVert_{0,2} + \lVert \bm{v}_{h0}-\bm{\Pi_h}\bm{v}_{h0}\rVert_{1,2}+\lVert \bm{v}_{h0}-\bm{I}_k\bm{v}_{h0}\rVert_{1,2}\\
	&\apprle \lVert \nabla_h \bm{v}_{h0}\rVert_0+\lVert \nabla_h (\bm{v}_{h0}-\bm{I}_k\bm{v}_{h0})\rVert_0\\
	&\apprle \interleave \bm{v}_h\interleave,
	\end{aligned}
	\end{equation*}	
which, together with \eqref{318} and \eqref{319}, yields the desired estimate (\ref{ineq 11}).

Similarly, we can obtain   (\ref{ineq 12}).  This finishes the proof.
\end{proof}

For any nonnegative integer $j$ and any $K\in \mathcal{T}_h$, we introduce the local Raviart-Thomas(RT) element space
\begin{equation*}
\bm{RT}_j(K)=[P_j(K)]^d+\bm{x}P_j(K).
\end{equation*}

Lemmas \ref{RT3.6}-\ref{RT38} show some properties of the $RT$ projection which can be founded in ({\cite{brezzi2008mixed}}.Page 9-10).

\begin{lem}\label{RT3.6}
	For any $\bm{v}_{h0}\in \bm{RT}_j(K)$, $\nabla\cdot \bm{v}_{h0}|_K=0$ implies $\bm{v}_{h0}\in [P_j(K)]^d$.
\end{lem}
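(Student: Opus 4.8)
The plan is to exploit the defining decomposition $\bm{RT}_j(K)=[P_j(K)]^d+\bm{x}P_j(K)$ together with Euler's identity for homogeneous polynomials. First I would write an arbitrary element as $\bm{v}_{h0}=\bm{p}+\bm{x}q$ with $\bm{p}\in[P_j(K)]^d$ and $q\in P_j(K)$. Since any monomial of $q$ of degree strictly less than $j$ contributes to $\bm{x}q$ a vector whose components already lie in $P_j(K)$, all such terms can be absorbed into $\bm{p}$. Hence, without loss of generality, I may take $q=\tilde{q}$ to be homogeneous of degree exactly $j$, so that the only part of $\bm{v}_{h0}$ not manifestly contained in $[P_j(K)]^d$ is the single term $\bm{x}\tilde{q}$.

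Next I would compute the divergence term by term. The contribution $\nabla\cdot\bm{p}$ is a polynomial of degree at most $j-1$. For the remaining piece I would apply the product rule and Euler's identity: since $\tilde{q}$ is homogeneous of degree $j$, one has $\sum_{i=1}^{d}x_i\partial_i\tilde{q}=j\tilde{q}$, whence
\begin{equation*}
\nabla\cdot(\bm{x}\tilde{q})=\sum_{i=1}^{d}\partial_i(x_i\tilde{q})=d\,\tilde{q}+\sum_{i=1}^{d}x_i\partial_i\tilde{q}=(d+j)\tilde{q},
\end{equation*}
which is itself homogeneous of degree $j$. Therefore $\nabla\cdot\bm{v}_{h0}=\nabla\cdot\bm{p}+(d+j)\tilde{q}$, i.e. a sum of a polynomial of degree at most $j-1$ and the homogeneous degree-$j$ polynomial $(d+j)\tilde{q}$.

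Finally, imposing $\nabla\cdot\bm{v}_{h0}|_K=0$ and separating this identity into its homogeneous components by degree, the degree-$j$ part must vanish on its own, giving $(d+j)\tilde{q}=0$. Since $d+j>0$, this forces $\tilde{q}=0$, and consequently $\bm{v}_{h0}=\bm{p}\in[P_j(K)]^d$, as claimed. I do not anticipate a serious obstacle here; the only point needing care is the reduction to a homogeneous $\tilde{q}$ and the clean separation of the divergence into distinct polynomial degrees, which is precisely what makes the Euler-identity argument decisive. An equivalent but longer route would be a dimension count comparing $\ker(\nabla\cdot)\cap\bm{RT}_j(K)$ with the divergence-free subspace of $[P_j(K)]^d$, but the direct computation above is shorter and more transparent.
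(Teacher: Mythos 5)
Your proof is correct. Note that the paper does not actually prove this lemma: it is quoted, together with Lemmas 4.2--4.3 of the same block, as a known property of the Raviart--Thomas space with a citation to Brezzi et al.\ (pages 9--10), so there is no in-paper argument to compare against. Your self-contained derivation is the standard one from that reference: reduce to $\bm{v}_{h0}=\bm{p}+\bm{x}\tilde{q}$ with $\tilde{q}$ homogeneous of degree $j$, use Euler's identity to get $\nabla\cdot(\bm{x}\tilde{q})=(d+j)\tilde{q}$, and kill the degree-$j$ homogeneous component. The only step you leave implicit is that $\nabla\cdot\bm{v}_{h0}$ vanishing on $K$ (a simplex with nonempty interior) forces the polynomial to vanish identically, which is what licenses the separation into homogeneous components; that is immediate, so the argument is complete.
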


\begin{lem}\label{RT}
	For any $K\in \mathcal{T}_h$ and $\bm{v}\in [H^1(K)]^d$, there exists a unique $\bm{P}_j^{RT}\bm{v}\in \bm{RT}_j(K)$ such that
	\begin{align}
	\langle\bm{P}_j^{RT}\bm{v}\cdot \bm{n}_e,w_j\rangle_e&= \langle\bm{v}\cdot \bm{n}_e,w_j\rangle_e,\quad \forall w_j\in P_j(e),e\in \partial K,\label{RT1}\\
	(\bm{P}_j^{RT}\bm{v},\bm{w}_{j-1})_K&= (\bm{v},\bm{w}_{j-1})_K,\quad \forall \bm{w}_{j-1}\in [P_{j-1}(K)]^d\label{RT2}. 
	\end{align}
	If $j=0$, $\bm{P}_j^{RT}\bm{v}$ is determined only by (\ref{RT1}). Moreover, the following approximation holds:
	\begin{equation*}\label{RT3}
	\lVert \bm{v}-\bm{P}_j^{RT}\bm{v}\rVert_{0,K}\apprle h_K^r\lvert \bm{v}\rvert_{r,K},\quad \forall 1\leq r\leq j+1,\forall \bm{v}\in[H^r(K)]^d.
	\end{equation*}
\end{lem}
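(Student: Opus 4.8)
The plan is to treat the statement in two stages: first the existence and uniqueness of $\bm{P}_j^{RT}\bm{v}$, which amounts to verifying that the conditions (\ref{RT1})--(\ref{RT2}) form a \emph{unisolvent} set of degrees of freedom on $\bm{RT}_j(K)$, and then the approximation bound by a Bramble--Hilbert and scaling argument. Since the number of conditions in (\ref{RT1})--(\ref{RT2}) equals $\dim\bm{RT}_j(K)$ --- a dimension count in which the $d+1$ faces each contribute $\dim P_j(e)$ normal moments and the interior contributes $\dim[P_{j-1}(K)]^d$ moments, matching $\dim\bm{RT}_j(K)$, while for $j=0$ the interior moments are absent and the $d+1$ face moments alone match $\dim\bm{RT}_0(K)=d+1$ --- it suffices to show that any $\bm{v}_h\in\bm{RT}_j(K)$ annihilating all the right-hand sides vanishes identically.

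For this unisolvence, which is the heart of the argument, suppose $\bm{v}_h\in\bm{RT}_j(K)$ satisfies $\langle\bm{v}_h\cdot\bm{n}_e,w_j\rangle_e=0$ for all $w_j\in P_j(e)$, $e\subset\partial K$, and $(\bm{v}_h,\bm{w}_{j-1})_K=0$ for all $\bm{w}_{j-1}\in[P_{j-1}(K)]^d$. Since $\bm{v}_h\cdot\bm{n}_e\in P_j(e)$, the first set of conditions gives $\bm{v}_h\cdot\bm{n}_e=0$ on each face. Noting that $\nabla\cdot\bm{v}_h\in P_j(K)$ for $\bm{v}_h\in\bm{RT}_j(K)$, integration by parts yields, for every $\phi\in P_j(K)$,
\[
(\nabla\cdot\bm{v}_h,\phi)_K=-(\bm{v}_h,\nabla\phi)_K+\langle\bm{v}_h\cdot\bm{n}_K,\phi\rangle_{\partial K}=0,
\]
because $\nabla\phi\in[P_{j-1}(K)]^d$ kills the volume term via (\ref{RT2}) and the boundary term vanishes by the normal-trace condition. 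Hence $\nabla\cdot\bm{v}_h=0$, and Lemma \ref{RT3.6} forces $\bm{v}_h\in[P_j(K)]^d$. To conclude, I would invoke the barycentric coordinates $\lambda_i$: since $\bm{v}_h\cdot\bm{n}_e=0$ on the face $\{\lambda_i=0\}$ and $\nabla\lambda_i$ is parallel to the normal of that face, $\bm{v}_h\cdot\nabla\lambda_i$ is a degree-$j$ polynomial vanishing on $\{\lambda_i=0\}$, hence divisible by $\lambda_i$; expanding $\bm{v}_h$ in the basis dual to $\{\nabla\lambda_i\}$ and testing against a suitable $\bm{w}_{j-1}\in[P_{j-1}(K)]^d$ assembled from these factors then forces $\bm{v}_h=0$. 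This divisibility-and-testing step is the one I expect to be the main obstacle, since it is precisely where the interplay between the normal conditions and the interior moments must be exploited; the full details are as in (\cite{brezzi2008mixed}).

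For the approximation estimate, I would first observe that $\bm{P}_j^{RT}$ reproduces polynomials: if $\bm{v}\in[P_j(K)]^d\subset\bm{RT}_j(K)$, then $\bm{v}$ itself satisfies (\ref{RT1})--(\ref{RT2}), so by uniqueness $\bm{P}_j^{RT}\bm{v}=\bm{v}$. Working on the reference simplex $\hat K$, the operator $\hat{\bm{P}}_j^{RT}$ is a bounded linear map from $[H^1(\hat K)]^d$ into $[L^2(\hat K)]^d$ (its face functionals are continuous by the trace theorem, the interior ones trivially so) and it reproduces $[P_j(\hat K)]^d$; the Bramble--Hilbert lemma then gives $\lVert\hat{\bm{v}}-\hat{\bm{P}}_j^{RT}\hat{\bm{v}}\rVert_{0,\hat K}\apprle\lvert\hat{\bm{v}}\rvert_{r,\hat K}$ for $1\le r\le j+1$. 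Finally I would transport this to $K$ through the contravariant (Piola) transform, under which $\bm{RT}_j$ and the functionals (\ref{RT1})--(\ref{RT2}) are invariant, so that $\bm{P}_j^{RT}$ commutes with the pullback; the standard scaling estimates for the Piola map on a shape-regular simplex then produce the factor $h_K^r$ and yield $\lVert\bm{v}-\bm{P}_j^{RT}\bm{v}\rVert_{0,K}\apprle h_K^r\lvert\bm{v}\rvert_{r,K}$. The only point needing care here is tracking the correct powers of $h_K$ through the Piola scaling rather than a plain affine pullback, but shape-regularity keeps all geometric constants uniform.
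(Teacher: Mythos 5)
The paper does not prove this lemma at all: it is quoted as a standard property of the Raviart--Thomas projection with a pointer to \cite{brezzi2008mixed} (pp.~9--10), so there is no in-paper argument to compare against. Your sketch is the standard proof from that reference --- dimension count plus unisolvence (vanishing normal traces, then $\nabla\cdot\bm{v}_h=0$ by integration by parts, then reduction to $[P_j(K)]^d$ via Lemma \ref{RT3.6}, then the barycentric divisibility-and-testing step), followed by polynomial reproduction, Bramble--Hilbert on the reference simplex, and Piola scaling --- and it is correct in outline; the only step you leave genuinely unproved is the final unisolvence argument on $[P_j(K)]^d$, which you explicitly flag and defer to \cite{brezzi2008mixed}, exactly as the paper itself does for the whole lemma.
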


\begin{lem}\label{RT38}
	The operator $\bm{P}_j^{RT} $ defined in Lemma \ref{RT} satisfies
	\begin{equation*}\label{RTdiv}
	(\nabla\cdot \bm{P}_j^{RT}\bm{v},q_h)_K= (\nabla\cdot \bm{v},q_h)_K,\quad \forall \bm{v}\in [H^1(K)]^d, q_h\in P_j(K),K\in \mathcal{T}_h.
	\end{equation*}
\end{lem}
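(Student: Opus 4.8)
The plan is to reduce the identity to the two defining conditions \eqref{RT1} and \eqref{RT2} of the projection $\bm{P}_j^{RT}$ by means of integration by parts. First I would integrate by parts on the left-hand side to write
$$(\nabla\cdot \bm{P}_j^{RT}\bm{v},q_h)_K = -(\bm{P}_j^{RT}\bm{v},\nabla q_h)_K + \langle \bm{P}_j^{RT}\bm{v}\cdot\bm{n}_K,q_h\rangle_{\partial K}.$$

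Next I would treat the two resulting terms separately, the whole point being that the polynomial degrees are arranged precisely so that the two defining moments apply. For the volume term, since $q_h\in P_j(K)$ we have $\nabla q_h\in [P_{j-1}(K)]^d$, so \eqref{RT2} applies directly and gives $(\bm{P}_j^{RT}\bm{v},\nabla q_h)_K = (\bm{v},\nabla q_h)_K$. For the boundary term, restricting to each face $e\in\partial K$ we have $q_h|_e\in P_j(e)$, so \eqref{RT1} yields $\langle \bm{P}_j^{RT}\bm{v}\cdot\bm{n}_e,q_h\rangle_e = \langle \bm{v}\cdot\bm{n}_e,q_h\rangle_e$; summing over the faces of $K$ then gives $\langle \bm{P}_j^{RT}\bm{v}\cdot\bm{n}_K,q_h\rangle_{\partial K} = \langle \bm{v}\cdot\bm{n}_K,q_h\rangle_{\partial K}$.

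Substituting these two identities back and integrating by parts once more, now on $\bm{v}$, recovers
$$-(\bm{v},\nabla q_h)_K + \langle \bm{v}\cdot\bm{n}_K,q_h\rangle_{\partial K} = (\nabla\cdot\bm{v},q_h)_K,$$
which is exactly the claimed equality.

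The only point demanding a little care is the degenerate case $j=0$, where $\bm{P}_0^{RT}\bm{v}$ is fixed by \eqref{RT1} alone and the interior condition \eqref{RT2} is vacuous. This causes no difficulty: for $q_h\in P_0(K)$ one has $\nabla q_h=0$, so the volume term drops out identically and the identity follows from \eqref{RT1} by itself. Beyond this, I do not expect any genuine obstacle — the result is the standard commuting property of the RT projection with the divergence, and the argument amounts purely to matching the degrees $\nabla q_h\in[P_{j-1}(K)]^d$ and $q_h|_e\in P_j(e)$ against the two defining moments of $\bm{P}_j^{RT}$.
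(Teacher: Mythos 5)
Your argument is correct and is exactly the standard commuting-diagram proof: Green's formula on $K$, the interior moments \eqref{RT2} against $\nabla q_h\in[P_{j-1}(K)]^d$, the face moments \eqref{RT1} against $q_h|_e\in P_j(e)$, and Green's formula again, with the $j=0$ case handled correctly since $\nabla q_h=0$. The paper itself gives no proof of this lemma but merely cites \cite{brezzi2008mixed}, and your derivation is precisely the argument found there, so there is nothing to compare beyond noting that you have supplied the omitted details correctly.
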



\begin{lem}(\cite{chen2016robust})\label{commutativity}
	It holds the following commutativity properties:
	\begin{align}
	\nabla_{w,m}\{\bm{P}_k^{RT}\bm{v},\bm{Q}_l^b\bm{v}\}&=\bm{Q}_m^0(\nabla\bm{v}),\quad \forall \bm{v}\in [H^1(\Omega_f)]^d. \label{com1}\\
	\nabla_{w,k}\{Q_{k-1}^0q,Q_k^bq\}&=\bm{Q}_k^0(\nabla q),\quad \forall q\in H^1(\Omega_f)\label{com2},\\
	\nabla_{w,m}\{Q_k^0 s,Q_l^b s\}&=\bm{Q}_m^0(\nabla s),\quad \forall s\in H^1(\Omega). \label{com3}
	\end{align}
\end{lem}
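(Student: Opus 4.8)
The plan is to prove all three identities element by element, since the global operators $\nabla_{w,m}$ and $\nabla_{w,k}$ are defined by restriction to each $K\in\mathcal{T}_h$. On a fixed $K$, I would establish each identity by testing the defining equation of the discrete weak gradient against an arbitrary $\bm{\tau}$ in the relevant polynomial space and checking that both sides produce the same linear functional of $\bm{\tau}$. The common engine is a two-step reduction: first use projection (or RT) orthogonality to strip the projections off the interior and boundary data, then apply Green's formula and the orthogonality of $\bm{Q}_m^0$ (or $\bm{Q}_k^0$).

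I would begin with the scalar identity \eqref{com2}. Fixing $\bm{\tau}\in[P_k(K)]^d$, the definition of $\nabla_{w,k,K}$ gives
\begin{equation*}
(\nabla_{w,k,K}\{Q_{k-1}^0 q, Q_k^b q\},\bm{\tau})_K = -(Q_{k-1}^0 q,\nabla\cdot\bm{\tau})_K + \langle Q_k^b q,\bm{\tau}\cdot\bm{n}_K\rangle_{\partial K}.
\end{equation*}
Because $\nabla\cdot\bm{\tau}\in P_{k-1}(K)$ and $\bm{\tau}\cdot\bm{n}_K|_e\in P_k(e)$ on each edge $e\subset\partial K$, the orthogonality of $Q_{k-1}^0$ and $Q_k^b$ lets me replace $Q_{k-1}^0 q$ and $Q_k^b q$ by $q$. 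Green's formula then turns the right-hand side into $(\nabla q,\bm{\tau})_K$, and the orthogonality of $\bm{Q}_k^0$ finally produces $(\bm{Q}_k^0\nabla q,\bm{\tau})_K$; since $\bm{\tau}$ is arbitrary, \eqref{com2} follows. The identity \eqref{com3} is the same argument with different bookkeeping: for $\bm{\tau}\in[P_m(K)]^d$ one has $\nabla\cdot\bm{\tau}\in P_{m-1}(K)\subseteq P_k(K)$ and $\bm{\tau}\cdot\bm{n}_K|_e\in P_m(e)\subseteq P_l(e)$, using $m\le l\le k$, so that $Q_k^0$ and $Q_l^b$ are again exact against the test quantities.

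I expect the vector identity \eqref{com1} to be the main obstacle, because its interior degree of freedom is the Raviart-Thomas projection $\bm{P}_k^{RT}\bm{v}$ rather than a plain $L^2$ projection. Working componentwise and testing against $\bm{\tau}\in[P_m(K)]^d$, the $i$-th component expands as $-((\bm{P}_k^{RT}\bm{v})_i,\nabla\cdot\bm{\tau})_K + \langle Q_l^b v_i,\bm{\tau}\cdot\bm{n}_K\rangle_{\partial K}$. The boundary term is handled exactly as above through the exactness of $Q_l^b$ (since $m\le l$). For the interior term I would rewrite $((\bm{P}_k^{RT}\bm{v})_i,\nabla\cdot\bm{\tau})_K = (\bm{P}_k^{RT}\bm{v},(\nabla\cdot\bm{\tau})\bm{e}_i)_K$; the decisive observation is that $(\nabla\cdot\bm{\tau})\bm{e}_i\in[P_{m-1}(K)]^d\subseteq[P_{k-1}(K)]^d$, so property \eqref{RT2} applies and replaces $\bm{P}_k^{RT}\bm{v}$ by $\bm{v}$. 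The component then collapses to $-(v_i,\nabla\cdot\bm{\tau})_K + \langle v_i,\bm{\tau}\cdot\bm{n}_K\rangle_{\partial K} = (\nabla v_i,\bm{\tau})_K = (\bm{Q}_m^0\nabla v_i,\bm{\tau})_K$, and assembling the $d$ components gives \eqref{com1}.

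The only real subtlety, and the single place where care is needed, is the degree compatibility exploited at every step: differentiation drops the polynomial degree by one, so $\nabla\cdot\bm{\tau}$ lands exactly in the space on which the relevant interior projection ($Q_{k-1}^0$, $Q_k^0$, or the order-$(k-1)$ moments defining $\bm{P}_k^{RT}$) is exact, while the normal trace $\bm{\tau}\cdot\bm{n}_K$ remains within the boundary polynomial space paired with $Q_k^b$ or $Q_l^b$. Verifying these inclusions under the hypothesis $k-1\le m\le l\le k$ is precisely what makes the three identities hold.
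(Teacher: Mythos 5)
Your proof is correct: the paper itself does not prove this lemma (it defers to \cite{chen2016robust}), and your argument — testing the defining relation of $\nabla_{w,r,K}$ against an arbitrary polynomial $\bm{\tau}$, using the moment conditions of $Q_{k-1}^0$, $Q_k^b$, $Q_l^b$ and property \eqref{RT2} of $\bm{P}_k^{RT}$ to strip the projections, then applying Green's formula and the orthogonality of $\bm{Q}_m^0$ (resp.\ $\bm{Q}_k^0$) — is exactly the standard argument used in that reference. The degree bookkeeping you highlight ($\nabla\cdot\bm{\tau}\in P_{m-1}\subseteq P_{k-1}$ and $\bm{\tau}\cdot\bm{n}_K|_e\in P_m(e)\subseteq P_l(e)$ under $k-1\le m\le l\le k$) is indeed the only point requiring care, and you have verified it correctly.
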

\subsection{Stability conditions}
\begin{lem}\label{boundedness}
	For any $\bm{u}_h,\bm{v}_h \in \bm{V}_h$, and $T_h,s_h \in S_h$,   the following inequalities hold:
	\begin{align}
	a_h(\bm{u}_h,\bm{v}_h) &\apprle \Pr\interleave \bm{u}_h\interleave \cdot\interleave \bm{v}_h\interleave,\label{continuous result ah} \\
	a_h(\bm{v}_h,\bm{v}_h) &= \Pr\interleave \bm{v}_h\interleave^2,\label{coercivity ah} \\
	\overline{a}_h(T_h,s_h) &\apprle \kappa\interleave T_h\interleave \cdot\interleave s_h\interleave,\label{continuous result ahbar} \\
	\overline{a}_h(s_h,s_h) &= \kappa\interleave s_h\interleave^2,\label{coercivity ahbar} \\
	 c_h(\bm{w}_h;\bm{u}_h,\bm{v}_h) &\apprle \interleave \bm{w}_h\interleave \cdot \interleave \bm{u}_h\interleave \cdot \interleave \bm{v}_h\interleave ,\label{continuous result ch} \\
 \overline{c}_h(\bm{w}_h;T_h,s_h)  &\apprle \interleave \bm{w}_h\interleave \cdot \interleave T_h\interleave \cdot \interleave s_h\interleave ,\label{continuous result chbar} \\
	d_h(T_h,\bm{v}_h)&\apprle \Pr Ra \interleave T_h\interleave \cdot\interleave \bm{v}_h\interleave. \label{continuous result dh}
	\end{align}
\end{lem}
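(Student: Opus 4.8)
The plan is to dispatch the two coercivity identities, the continuity of the two symmetric forms, and the form $d_h$ first, since these are immediate, and then to concentrate the real effort on the two convective trilinear forms \eqref{continuous result ch} and \eqref{continuous result chbar}.

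For \eqref{coercivity ah} and \eqref{coercivity ahbar} I would simply set the two arguments equal in the definitions of $a_h$ and $\overline{a}_h$; the resulting right-hand sides coincide termwise with the squared norms $\interleave\cdot\interleave^2$, so both are exact equalities. For the continuity bounds \eqref{continuous result ah} and \eqref{continuous result ahbar} I would apply the Cauchy--Schwarz inequality separately to the volumetric product $(\nabla_{w,m}\cdot,\nabla_{w,m}\cdot)$ and to the stabilization pairing $\langle\tau(\cdot),\cdot\rangle$ (splitting $\tau=\tau^{1/2}\tau^{1/2}$), recognizing each factor as a piece of $\interleave\cdot\interleave$. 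The form $d_h(T_h,\bm{v}_h)=\Pr Ra(\bm{j}T_{h0},\bm{v}_{h0})$ is handled by Cauchy--Schwarz with $|\bm{j}|=1$, giving $\lVert T_{h0}\rVert_0\lVert\bm{v}_{h0}\rVert_0$, after which Lemma \ref{discrete sobolev embedding} with $\tilde{q}=2$ replaces each $L^2$-norm by the corresponding triple-bar norm.

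The substance is the convective terms. For the momentum trilinear form I would expand $(\nabla_{w,k}\cdot(\bm{u}_h\otimes\bm{w}_h),\bm{v}_{h0})$ rowwise through the definition of the discrete weak divergence, testing against $\bm{v}_{h0}\in[P_k(K)]^d$, to obtain
\[
(\nabla_{w,k}\cdot(\bm{u}_h\otimes\bm{w}_h),\bm{v}_{h0})=-\sum_{K}(\bm{u}_{h0}\otimes\bm{w}_{h0},\nabla\bm{v}_{h0})_K+\sum_{K}\langle(\bm{w}_{hb}\cdot\bm{n}_K)\bm{u}_{hb},\bm{v}_{h0}\rangle_{\partial K},
\]
using $(\bm{u}\otimes\bm{w})\bm{n}=(\bm{w}\cdot\bm{n})\bm{u}$. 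The volumetric part is controlled by the generalized H\"older inequality with exponents $(4,4,2)$, admissible in both $d=2$ and $d=3$ since $4\le 6$, bounding it by $\lVert\bm{u}_{h0}\rVert_{0,4}\,\lVert\bm{w}_{h0}\rVert_{0,4}\,\lVert\nabla_h\bm{v}_{h0}\rVert_0$; Lemma \ref{discrete sobolev embedding} and \eqref{1seminorm v} then turn all three factors into triple-bar norms.

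The interface part is where the main difficulty lies. Since $\bm{w}_{hb},\bm{u}_{hb},\bm{v}_{hb}$ are single-valued on interior faces and vanish on $\partial\Omega_f$, the sum $\sum_K\langle(\bm{w}_{hb}\cdot\bm{n}_K)\bm{u}_{hb},\bm{v}_{hb}\rangle_{\partial K}$ vanishes, so I may replace $\bm{v}_{h0}$ by $\bm{v}_{h0}-\bm{v}_{hb}$, which exposes the stabilization jump $\bm{Q}_l^b\bm{v}_{h0}-\bm{v}_{hb}$ living in $\interleave\bm{v}_h\interleave$. The remaining task is to distribute the three boundary factors by H\"older on each $\partial K$ and then to reconcile every power of $h_K$: splitting $\bm{u}_{hb}=\bm{Q}_l^b\bm{u}_{h0}-(\bm{Q}_l^b\bm{u}_{h0}-\bm{u}_{hb})$ (and likewise for $\bm{w}_{hb}$), the trace inequality (Lemma \ref{trace inequality}) together with $L^{\tilde q}$-stability of $\bm{Q}_l^b$ converts the interior-trace pieces into volumetric $L^{\tilde q}$-norms at the cost of negative fractional powers of $h_K$, the jump pieces are absorbed by the stabilization via the inverse inequality, and the factor $\lVert\bm{v}_{h0}-\bm{v}_{hb}\rVert_{0,\partial K}$ supplies the compensating $h_K^{1/2}$ through $\tau^{1/2}=h_K^{-1/2}$. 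After this bookkeeping and one further application of Lemma \ref{discrete sobolev embedding} the interface term is bounded by $\interleave\bm{w}_h\interleave\,\interleave\bm{u}_h\interleave\,\interleave\bm{v}_h\interleave$; the second half of $c_h$ is identical with $\bm{u}_h$ and $\bm{v}_h$ interchanged, and \eqref{continuous result chbar} follows by the same argument with the transported scalar $T_h$ (resp.\ $s_h$) in the role of one factor, invoking Lemma \ref{discrete sobolev embedding} for both the velocity and temperature fields. I expect the careful tracking of the $h_K$-powers in this interface term to be the only nonroutine step of the entire proof.
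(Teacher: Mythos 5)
Your proposal follows essentially the same route as the paper: expand the discrete weak divergence to split $c_h$ into a volumetric part (bounded by H\"older with exponents $(4,4,2)$ plus the discrete Sobolev embedding of Lemma \ref{discrete sobolev embedding}) and face terms controlled through the trace inequality of Lemma \ref{trace inequality}, the inverse inequality, and the stabilization jumps, with the remaining estimates \eqref{continuous result ah}--\eqref{coercivity ahbar} and \eqref{continuous result dh} handled by Cauchy--Schwarz exactly as in the paper. The only cosmetic difference is in the algebraic regrouping of the boundary terms --- you invoke single-valuedness of the traces to replace $\bm{v}_{h0}$ by $\bm{v}_{h0}-\bm{v}_{hb}$ and then split $\bm{u}_{hb}$, whereas the paper adds and subtracts the interior traces to form the jumps $\bm{u}_{h0}-\bm{u}_{hb}$, $\bm{v}_{h0}-\bm{v}_{hb}$ directly --- but the resulting term-by-term estimates are the same.
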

\begin{proof}
	From the definitions of $a_h(\cdot,\cdot), \overline{a}_h(\cdot,\cdot),c_h(\cdot;\cdot,\cdot),\overline{c}_h(\cdot;\cdot,\cdot), d_h(\cdot,\cdot)$, Cauchy-Schwarz inequality and Lemma \ref{discrete sobolev embedding}, we can easily get   (\ref{continuous result ah}),(\ref{continuous result ahbar}), and (\ref{continuous result dh}).
	
	For all $\bm{u}_h,\bm{v}_h \in \bm{V}_h$, by the definition of $\nabla_{w,k}\cdot$ we have
	\begin{align*}
	2  c_h(\bm{w}_h;\bm{u}_h,\bm{v}_h) = &(\bm{v}_{h0}\otimes\bm{w}_{h0},\nabla_h\bm{u}_{h0})-(\bm{u}_{h0}\otimes\bm{w}_{h0},\nabla_h\bm{v}_{h0})  \\
	&\quad - \langle \bm{v}_{hb}\otimes\bm{w}_{hb}\bm{n},\bm{u}_{h0}\rangle_{\partial\mathcal{T}_h^f} +\langle \bm{u}_{hb}\otimes\bm{w}_{hb}\bm{n},\bm{v}_{h0}\rangle_{\partial\mathcal{T}_h^f}\\
	= &\big((\bm{v}_{h0}\otimes\bm{w}_{h0},\nabla_h\bm{u}_{h0})-(\bm{u}_{h0}\otimes\bm{w}_{h0},\nabla_h\bm{v}_{h0})\big)  \\
	&+  \langle (\bm{u}_{h0}-\bm{u}_{hb})\otimes(\bm{w}_{h0}-\bm{w}_{hb})\bm{n},\bm{v}_{h0}\rangle \\
	&-  \langle (\bm{u}_{h0}-\bm{u}_{hb})\otimes\bm{w}_{h0}\bm{n},\bm{v}_{h0}\rangle \\
    &- \langle (\bm{v}_{h0}-\bm{v}_{hb})\otimes(\bm{w}_{h0}-\bm{w}_{hb})\bm{n},\bm{u}_{h0}\rangle  \\
    &+  \langle (\bm{v}_{h0}-\bm{v}_{hb})\otimes\bm{w}_{h0}\bm{n},\bm{u}_{h0}\rangle   \\
    	=:&  \sum_{i=1}^{5}R_i.
	\end{align*}
    In light of  H\"{o}lder's inequality and Lemma \ref{discrete sobolev embedding}, we obtain
    \begin{align*}
    \lvert R_1 \rvert &\leq \lVert \bm{v}_{h0}\rVert_{0,4}\lVert \bm{w}_{h0}\rVert_{0,4}\lVert \nabla_h\bm{u}_{h0}\rVert_{0,2} +\lVert \bm{u}_{h0}\rVert_{0,4}\lVert \bm{w}_{h0}\rVert_{0,4}\lVert \nabla_h\bm{v}_{h0}\rVert_{0,2} \\
    &\apprle \interleave \bm{w}_h\interleave \cdot \interleave \bm{u}_h\interleave \cdot \interleave \bm{v}_h\interleave.
    \end{align*}
   From  H\"{o}lder's inequality, Lemma \ref{trace inequality}, Lemma \ref{discrete sobolev embedding},  and the inverse inequality,  it follows
    \begin{align*}
    |R_2| &\leq \sum_{K\in\mathcal{T}_h^f}\lVert \bm{w}_{h0}-\bm{w}_{hb}\rVert_{0,3,\partial K}\lVert \bm{u}_{h0}-\bm{u}_{hb}\rVert_{0,2,\partial K}\lVert \bm{v}_{h0}\rVert_{0,6,\partial K} \\
    &\leq \sum_{K\in\mathcal{T}_h^f}h_K^{-\frac{d-1}{6}}\lVert \bm{w}_{h0}-\bm{w}_{hb}\rVert_{0,2,\partial K}\lVert \bm{u}_{h0}-\bm{u}_{hb}\rVert_{0,2,\partial K}h_K^{-\frac{1}{6}}\lVert \bm{v}_{h0}\rVert_{0,6, K} \\
    &\leq \sum_{K\in\mathcal{T}_h^f}h_K^{-\frac{1}{2}}\lVert \bm{w}_{h0}-\bm{w}_{hb}\rVert_{0,2,\partial K}h_K^{-\frac{1}{2}}\lVert \bm{u}_{h0}-\bm{u}_{hb}\rVert_{0,2,\partial K}h_K^{1-\frac{d}{6}}\lVert \bm{v}_{h0}\rVert_{0,6, K} \\
   &\apprle \interleave \bm{w}_h\interleave \cdot \interleave \bm{u}_h\interleave \cdot \interleave \bm{v}_h\interleave
   \end{align*}
    and
    \begin{align*}
    |R_3| &\leq \sum_{K\in\mathcal{T}_h^f}\lVert \bm{w}_{h0}\rVert_{0,4,\partial K}\lVert \bm{u}_{h0}-\bm{u}_{hb}\rVert_{0,2,\partial K}\lVert \bm{v}_{h0}\rVert_{0,4,\partial K} \\
    &\leq \sum_{K\in\mathcal{T}_h^f}h_K^{-\frac{1}{4}}\lVert \bm{w}_{h0}\rVert_{0,4,K}\lVert \bm{u}_{h0}-\bm{u}_{hb}\rVert_{0,2,\partial K}h_K^{-\frac{1}{4}}\lVert \bm{v}_{h0}\rVert_{0,4,K} \\
    &\leq \sum_{K\in\mathcal{T}_h^f}\lVert \bm{w}_{h0}\rVert_{0,4,K}h_K^{-\frac{1}{2}}\lVert \bm{u}_{h0}-\bm{u}_{hb}\rVert_{0,2,\partial K}\lVert \bm{v}_{h0}\rVert_{0,4,K} \\
    &\apprle \interleave \bm{w}_h\interleave \cdot \interleave \bm{u}_h\interleave \cdot \interleave \bm{v}_h\interleave.
    \end{align*}
    Similarly, we can get 
     \begin{align*}
    |R_4|+|R_5| &\apprle \interleave \bm{w}_h\interleave \cdot \interleave \bm{u}_h\interleave \cdot \interleave \bm{v}_h\interleave.
    \end{align*}
       As a result,    the estimate (\ref{continuous result ch}) holds. 
       
       The estimate (\ref{continuous result chbar}) follows similarly.
\end{proof}

By \eqref{3356}, Lemma \ref{boundedness}, and the definitions of the trilinear forms $A_h(\cdot;\cdot,\cdot)$ and $\overline A_h(\cdot;\cdot,\cdot)$, we easily get the following continuity and coercivity results.

\begin{lem}\label{lem31}
	For any $\bm{w}_h,\bm{u}_h,\bm{v}_h \in \bm{V}_h, T_h,s_h \in S_h$, it holds
	\begin{align}
	A_h(\bm{w}_h;\bm{u}_h,\bm{v}_h) &\apprle (\Pr+\interleave \bm{w}_h\interleave)\interleave \bm{u}_h\interleave \cdot\interleave \bm{v}_h\interleave,\\
	\overline{A}_h(\bm{w}_h;T_h,s_h) &\apprle (\kappa +\interleave \bm{w}_h\interleave)\interleave T_h\interleave \interleave s_h\interleave,\\
	A_h(\bm{v}_h;\bm{v}_h,\bm{v}_h) &= \Pr \interleave \bm{v}_h\interleave^2,\\
	\overline{A}_h(\bm{v}_h;s_h,s_h) &= \kappa \interleave s_h\interleave^2 .\label{Core-Bar-A}
	\end{align}
\end{lem}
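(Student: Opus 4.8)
The plan is to obtain all four estimates as direct corollaries of the boundedness results already collected in \cref{boundedness} together with the skew-symmetry identity \eqref{3356}; no new analysis is required, since the defining decompositions $A_h(\bm{w}_h;\bm{u}_h,\bm{v}_h)=a_h(\bm{u}_h,\bm{v}_h)+c_h(\bm{w}_h;\bm{u}_h,\bm{v}_h)$ and $\overline{A}_h(\bm{w}_h;T_h,s_h)=\overline{a}_h(T_h,s_h)+\overline{c}_h(\bm{w}_h;T_h,s_h)$ split each trilinear form into a bilinear ``viscous/diffusive'' part and a ``convective'' part whose behavior is already quantified. The two continuity bounds come from the triangle inequality applied to these splittings, and the two coercivity identities come from testing against the diagonal argument and invoking \eqref{3356}.

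For the continuity of $A_h$, I would start from the splitting above and bound each piece separately: by \eqref{continuous result ah} the viscous part satisfies $a_h(\bm{u}_h,\bm{v}_h)\apprle \Pr\interleave\bm{u}_h\interleave\cdot\interleave\bm{v}_h\interleave$, while by \eqref{continuous result ch} the convective part satisfies $c_h(\bm{w}_h;\bm{u}_h,\bm{v}_h)\apprle\interleave\bm{w}_h\interleave\cdot\interleave\bm{u}_h\interleave\cdot\interleave\bm{v}_h\interleave$. Adding the two and factoring out the common $\interleave\bm{u}_h\interleave\cdot\interleave\bm{v}_h\interleave$ yields the claimed bound $(\Pr+\interleave\bm{w}_h\interleave)\interleave\bm{u}_h\interleave\cdot\interleave\bm{v}_h\interleave$. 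The estimate for $\overline{A}_h$ is identical in structure, using \eqref{continuous result ahbar} for $\overline{a}_h$ and \eqref{continuous result chbar} for $\overline{c}_h$, which produces the factor $(\kappa+\interleave\bm{w}_h\interleave)$.

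For the coercivity identities, I would set $\bm{u}_h=\bm{v}_h=\bm{w}_h=\bm{v}_h$ (respectively $T_h=s_h=s_h$) in the splittings. The convective contribution vanishes by \eqref{3356}, i.e. $c_h(\bm{v}_h;\bm{v}_h,\bm{v}_h)=0$ and $\overline{c}_h(\bm{v}_h;s_h,s_h)=0$, so only the diagonal bilinear term survives. Applying the exact identities \eqref{coercivity ah} and \eqref{coercivity ahbar} then gives $A_h(\bm{v}_h;\bm{v}_h,\bm{v}_h)=a_h(\bm{v}_h,\bm{v}_h)=\Pr\interleave\bm{v}_h\interleave^2$ and $\overline{A}_h(\bm{v}_h;s_h,s_h)=\overline{a}_h(s_h,s_h)=\kappa\interleave s_h\interleave^2$, as stated.

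In truth this lemma has no genuine obstacle of its own: every substantive difficulty was already absorbed into \cref{boundedness}, whose proof required the H\"older, trace (\cref{trace inequality}), inverse, and discrete Sobolev (\cref{discrete sobolev embedding}) inequalities to control the boundary terms $R_2,\dots,R_5$ arising from the weak divergence in $c_h$ and $\overline{c}_h$. Granting those bounds and the skew-symmetry \eqref{3356}, the present statement is purely bookkeeping, which is exactly why it can be asserted to follow ``easily.''
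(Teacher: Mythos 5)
Your proposal is correct and follows exactly the paper's route: the paper also derives all four statements directly from the splittings $A_h=a_h+c_h$, $\overline{A}_h=\overline{a}_h+\overline{c}_h$, the bounds and identities of \cref{boundedness}, and the skew-symmetry \eqref{3356}. You have merely written out in detail what the paper asserts in one sentence.
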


By following the same routine as in the proof of (\cite[Theorem 3.1]{chen2016robust}), we can obtain the following  inf-sup inequality.
\begin{lem}\label{infsup}
	 For any $(\bm{v}_h,q_h)\in \bm{V}_h^0\times Q_h^0$, it holds
	\begin{equation*}
	\sup\limits_{\bm{v}_h\in \bm{V}_h^0} \frac{b_h(\bm{v}_h,q_h)}{\interleave \bm{v}_h\interleave} \apprge \lVert q_h\rVert.
	\end{equation*}
\end{lem}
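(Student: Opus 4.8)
The plan is to argue by explicit construction, i.e. a Fortin-type argument following the routine of \cite{chen2016robust}: for an arbitrary $q_h=\{q_{h0},q_{hb}\}\in Q_h^0$ I will build a test velocity $\bm v_h\in\bm V_h^0$ that simultaneously activates the two contributions of $\lVert q_h\rVert$, namely $\lVert q_{h0}\rVert_0$ and the piecewise weak-gradient part $\big(\sum_{K\in\mathcal T_h^f}\lVert\nabla_{w,k}q_h\rVert_{0,K}^2\big)^{1/2}$, while keeping $\interleave\bm v_h\interleave\apprle\lVert q_h\rVert$. Concretely I set $\bm v_h=\bm v_h^{(1)}+\theta\,\bm v_h^{(2)}$ for a fixed small parameter $\theta>0$, where $\bm v_h^{(1)}$ is a global Raviart--Thomas-based field carrying the $\lVert q_{h0}\rVert_0^2$ term and $\bm v_h^{(2)}$ is an elementwise field carrying the weak-gradient term. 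Since $b_h(\cdot,q_h)$ is linear in its first argument and $\interleave\cdot\interleave$ obeys the triangle inequality, the two pieces decouple in the numerator and only add up in the denominator, so no cross terms need to be estimated.

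For the first piece I invoke the continuous inf-sup (Bogovski\u{\i}/LBB) on the Lipschitz subdomain $\Omega_f$: there is $\bm w\in[H_0^1(\Omega_f)]^d$ with $\nabla\cdot\bm w=-q_{h0}$ and $\lVert\bm w\rVert_1\apprle\lVert q_{h0}\rVert_0$. I then take $\bm v_h^{(1)}:=\{\bm P_k^{RT}\bm w,\bm Q_l^b\bm w\}$, which lies in $\bm V_h^0$ because $\bm w$ vanishes on $\partial\Omega_f$. Using the definition of $\nabla_{w,k}$, the divergence property of the RT projection (Lemma \ref{RT38}) together with $q_{h0}\in P_{k-1}(K)\subset P_k(K)$, and the normal-trace property \eqref{RT1} with $q_{hb}\in P_k(e)$, I get $b_h(\bm v_h^{(1)},q_h)=-(q_{h0},\nabla\cdot\bm w)+\sum_{K}\langle q_{hb},\bm w\cdot\bm n_K\rangle_{\partial K}$. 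The boundary sum vanishes, since $\bm w$ is single-valued across each interior face (the two adjacent normals cancel) and $\bm w=0$ on $\partial\Omega_f$; hence $b_h(\bm v_h^{(1)},q_h)=\lVert q_{h0}\rVert_0^2$. For the energy I use the commutativity identity \eqref{com1}, namely $\nabla_{w,m}\bm v_h^{(1)}=\bm Q_m^0(\nabla\bm w)$, which gives $\lVert\nabla_{w,m}\bm v_h^{(1)}\rVert_0\le\lVert\nabla\bm w\rVert_0\apprle\lVert q_{h0}\rVert_0$, while the stabilization term $\lVert\tau^{1/2}\bm Q_l^b(\bm P_k^{RT}\bm w-\bm w)\rVert_{0,\partial\mathcal T_h^f}$ is bounded by the trace inequality (Lemma \ref{trace inequality}) and the RT approximation estimate of Lemma \ref{RT}. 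Altogether $\interleave\bm v_h^{(1)}\interleave\apprle\lVert q_{h0}\rVert_0$.

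For the second piece I take $\bm v_h^{(2)}:=\{\bm v_{h0}^{(2)},\bm 0\}$ with $\bm v_{h0}^{(2)}|_K$ proportional to $\nabla_{w,k}q_h|_K\in[P_k(K)]^d$, scaled elementwise so that $b_h(\bm v_h^{(2)},q_h)$ bounds the weak-gradient part of $\lVert q_h\rVert^2$ from below. Its energy is controlled purely locally: by the norm equivalence \eqref{equivalentu2} of Lemma \ref{lem3.3}, the polynomial inverse inequality, and the trace bound of Lemma \ref{trace inequality} applied to $\bm Q_l^b\bm v_{h0}^{(2)}$, one obtains $\interleave\bm v_h^{(2)}\interleave\apprle\big(\sum_{K\in\mathcal T_h^f}\lVert\nabla_{w,k}q_h\rVert_{0,K}^2\big)^{1/2}$. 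Combining the two pieces by linearity of $b_h$ gives $b_h(\bm v_h,q_h)=\lVert q_{h0}\rVert_0^2+\theta\,b_h(\bm v_h^{(2)},q_h)\apprge\min(1,\theta)\lVert q_h\rVert^2$, and the triangle inequality gives $\interleave\bm v_h\interleave\le\interleave\bm v_h^{(1)}\interleave+\theta\interleave\bm v_h^{(2)}\interleave\apprle\lVert q_h\rVert$; dividing yields the asserted inf-sup bound.

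The main obstacle is the energy estimate for the global field $\bm v_h^{(1)}$, that is, controlling $\interleave\bm v_h^{(1)}\interleave$ by $\lVert q_{h0}\rVert_0$ with no loss of powers of $h$. This is precisely where the RT/commutativity machinery is indispensable: the commutativity property \eqref{com1} converts $\nabla_{w,m}\bm v_h^{(1)}$ into the stable quantity $\bm Q_m^0(\nabla\bm w)$, and the RT approximation estimate of Lemma \ref{RT} is what keeps the stabilization jump $\bm P_k^{RT}\bm w-\bm w$ on $\partial\mathcal T_h^f$ under control. The only remaining care is the scaling bookkeeping for $\bm v_h^{(2)}$, so that its contribution matches the weak-gradient term of $\lVert q_h\rVert$ without polluting the $L^2$-pressure term handled by $\bm v_h^{(1)}$.
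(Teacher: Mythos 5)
The paper itself gives no argument for Lemma \ref{infsup} beyond deferring to the routine of \cite[Theorem 3.1]{chen2016robust}, so I am judging your construction on its own; your two\,-piece Fortin strategy (a Bogovski\u{\i} field for $\lVert q_{h0}\rVert_0$ plus a local field for the weak-gradient part) is indeed that routine, but two of your steps do not close as written. First, the global field $\bm v_h^{(1)}=\{\bm P_k^{RT}\bm w,\bm Q_l^b\bm w\}$ need not lie in $\bm V_h^0$: the space requires $\bm v_{h0}|_K\in[P_k(K)]^d$, while $\bm P_k^{RT}\bm w\in\bm{RT}_k(K)\not\subset[P_k(K)]^d$. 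The paper gets away with the analogous choice in Lemma \ref{lem44} only because there $\nabla\cdot\bm u=0$, so Lemma \ref{RT3.6} yields \eqref{local RT}; your $\bm w$ has $\nabla\cdot\bm w=-q_{h0}\neq 0$, so that escape is unavailable. Relatedly, your identity $b_h(\bm v_h^{(1)},q_h)=-(q_{h0},\nabla\cdot\bm w)+\sum_K\langle q_{hb},\bm w\cdot\bm n_K\rangle_{\partial K}$ invokes the defining equation of $\nabla_{w,k}q_h$ with the test function $\bm P_k^{RT}\bm w$, which is licensed only for test functions in $[P_k(K)]^d$. The standard repair is a projection into $[P_k(K)]^d$ that still matches normal traces against $P_k(e)$ and divergences against $P_{k-1}(K)$ (a BDM-type projection); with only the RT machinery of Lemmas \ref{RT3.6}--\ref{RT38} this step is genuinely incomplete. (The energy bound for the first piece via \eqref{com1} and Lemma \ref{RT} is otherwise fine.)

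Second, the scaling of the local piece is wrong. Taking $\bm v_{h0}^{(2)}|_K\propto\nabla_{w,k}q_h|_K$ with $\bm v_{hb}^{(2)}=\bm 0$, the stabilization contribution to $\interleave\bm v_h^{(2)}\interleave^2$ is $\sum_K h_K^{-1}\lVert\bm Q_l^b\bm v_{h0}^{(2)}\rVert_{0,\partial K}^2$, and already for $\bm v_{h0}^{(2)}$ constant on $K$ this is of exact order $h_K^{-2}\lVert\bm v_{h0}^{(2)}\rVert_{0,K}^2$; the trace and inverse inequalities you cite only give $\interleave\bm v_h^{(2)}\interleave\apprle\big(\sum_K h_K^{-2}\lVert\nabla_{w,k}q_h\rVert_{0,K}^2\big)^{1/2}$, not the $h$-independent bound you claim. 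The choice that does close is $\bm v_{h0}^{(2)}|_K=h_K^2\,\nabla_{w,k}q_h|_K$, which yields $b_h(\bm v_h^{(2)},q_h)=\sum_K h_K^2\lVert\nabla_{w,k}q_h\rVert_{0,K}^2$ and $\interleave\bm v_h^{(2)}\interleave\apprle\big(\sum_K h_K^2\lVert\nabla_{w,k}q_h\rVert_{0,K}^2\big)^{1/2}$ --- i.e.\ your construction controls only the $h_K$-weighted weak-gradient seminorm (the form of the pressure norm used in \cite{chen2016robust}), not the unweighted sum $\sum_{K}\lVert\nabla_{w,k}q_h\rVert_{0,K}^2$ appearing in the definition of $\lVert q_h\rVert$ in this paper. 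No rescaling of $\theta$ fixes this: since $b_h(\bm v_h,q_h)=-(q_{h0},\nabla_h\cdot\bm v_{h0})+\sum_K\langle q_{hb},(\bm v_{h0}-\bm v_{hb})\cdot\bm n_K\rangle_{\partial K}$ is bounded by $h$-weighted face quantities times $\interleave\bm v_h\interleave$, the unweighted term cannot be recovered this way, so either the norm should carry the $h_K^2$ weight or a different argument is required.
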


\subsection{Existence and uniqueness results}

We define a space 
\begin{equation*}
\bm{W}_h := \{\bm{w}_h\in \bm{V}_h^0:b_h(\bm{w}_h,q_h) = 0,\forall q_h\in Q_h^0\},
\end{equation*}
and introduce the following discretization problem: seek $(\bm{u_h},T_h)\in \bm{W}_h\times S_h^0$
\begin{align}\label{pb3}
\left \{
\begin{array}{rl}
A_h(\bm{u}_h;\bm{u}_h,\bm{v}_h)-d_h(T_h,\bm{v}_h) &= (\bm{f},\bm{v}_{h0}), \forall \bm{v}_h\in \bm{W}_h,\\
\overline{A}_h(\bm{u}_h;T_h,s_h) &= (g,s_{h0}), \forall s_h\in S_h^0.
\end{array}
\right.
\end{align}
It is easy to see that, by Lemma \ref{infsup} and  the theory of mixed finite element methods \cite{brezzi2008mixed},  the following conclusion holds.
\begin{lem}\label{lem313}
	The problems (\ref{pb2}) and (\ref{pb3}) are equivalent in the sense that (i) and (ii) hold:
	
	(i)  if $(\bm{u}_h,p_h,T_h)\in \bm{V}_h^0\times Q_h^0\times S_h^0$ is the solution to the problem (\ref{pb2}), then   $(\bm{u_h},T_h)$ 
	is the solution to  the problem \eqref{pb3};
	
	(ii)  if $(\bm{u_h},T_h)\in \bm{W}_h\times S_h^0$ is  the solution to  the problem \eqref{pb3}, then $(\bm{u}_h,p_h,T_h)$ 
	is the solution to the problem (\ref{pb2}), where $p_h\in Q_h^0$ is determined by 
	\begin{equation*}
	 b_h(\bm{v}_h,p_h)= (\bm{f},\bm{v}_{h0})-A_h(\bm{u}_h;\bm{u}_h,\bm{v}_h)+d_h(T_h,\bm{v}_h), \forall \bm{v}_h \in \bm{V}_h^0.
	 \end{equation*}
\end{lem}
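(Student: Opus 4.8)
The plan is to recognize Lemma \ref{lem313} as the standard equivalence between a discrete saddle-point problem and its reduction to the kernel $\bm{W}_h$ of the constraint form $b_h(\cdot,\cdot)$; the only genuinely analytic ingredient is the inf-sup inequality of Lemma \ref{infsup}, which supplies the pressure. I would prove the two directions separately by suitable choices of test functions.

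For (i), assume $(\bm{u}_h,p_h,T_h)$ solves (\ref{pb2}). Taking $\bm{v}_h=\bm{0}$ and letting $q_h\in Q_h^0$ be arbitrary in the first equation gives $b_h(\bm{u}_h,q_h)=0$ for all $q_h$, hence $\bm{u}_h\in\bm{W}_h$. Then, restricting the first equation to $\bm{v}_h\in\bm{W}_h\subset\bm{V}_h^0$ (with $q_h=0$) and using that $b_h(\bm{v}_h,p_h)=0$ by the very definition of $\bm{W}_h$, the pressure term cancels and we recover the first equation of (\ref{pb3}). Since the second equations of (\ref{pb2}) and (\ref{pb3}) coincide, $(\bm{u}_h,T_h)$ solves (\ref{pb3}).

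For (ii), assume $(\bm{u}_h,T_h)\in\bm{W}_h\times S_h^0$ solves (\ref{pb3}). Because $\bm{u}_h\in\bm{W}_h$, the constraint $-b_h(\bm{u}_h,q_h)=0$ holds for all $q_h\in Q_h^0$, and the second equation of (\ref{pb2}) is already satisfied. It remains to construct $p_h\in Q_h^0$ with
\begin{equation*}
b_h(\bm{v}_h,p_h)=(\bm{f},\bm{v}_{h0})-A_h(\bm{u}_h;\bm{u}_h,\bm{v}_h)+d_h(T_h,\bm{v}_h)=:F(\bm{v}_h),\quad\forall\,\bm{v}_h\in\bm{V}_h^0.
\end{equation*}
The crucial observation is that the first equation of (\ref{pb3}) says precisely that the functional $F$ vanishes on $\bm{W}_h$.

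The one substantive step is the solvability of this pressure equation, and it is exactly here that Lemma \ref{infsup} is invoked. The inf-sup inequality means the map $\bm{V}_h^0\to (Q_h^0)'$ induced by $b_h$ is surjective with kernel $\bm{W}_h$; equivalently, every linear functional on $\bm{V}_h^0$ that annihilates $\bm{W}_h$ is represented by a unique $p_h\in Q_h^0$ through $b_h(\cdot,p_h)$. Since $F|_{\bm{W}_h}=0$, such a $p_h$ exists and is unique, and it is the pressure prescribed in the statement. Substituting this $p_h$ back reproduces the full first equation of (\ref{pb2}) for all $\bm{v}_h\in\bm{V}_h^0$, completing the equivalence. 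I do not anticipate any real obstacle beyond this closed-range/inf-sup argument; the only point requiring care is verifying the compatibility condition $F|_{\bm{W}_h}=0$ before appealing to Lemma \ref{infsup}.
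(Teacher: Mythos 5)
Your proof is correct and follows exactly the route the paper intends: the paper dispatches this lemma by citing Lemma \ref{infsup} and the standard theory of mixed finite element methods, and your argument is precisely that standard kernel/annihilator reasoning, with the inf-sup condition supplying existence and uniqueness of the pressure once you verify that the residual functional vanishes on $\bm{W}_h$. Nothing is missing; you have simply written out the details the paper leaves to the reference.
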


 In what follows we shall discuss the   existence and uniqueness of the solution to the problem (\ref{pb3}). To this end, we set 
\begin{align}
\mathcal{N}_h &:= \sup\limits_{0\neq\bm{w}_h,\bm{u}_h,\bm{v}_h\in \bm{W}_h}\frac{c_h(\bm{w}_h;\bm{u}_h,\bm{v}_h)}{\interleave \bm{w}_h\interleave\cdot \interleave \bm{u}_h\interleave\cdot\interleave \bm{v}_h\interleave},\label{Nh}\\
\mathcal{M}_h &:= \sup\limits_{\substack{0\neq\bm{w}_h\in \bm{W}_h,\\ 0\neq T,s\in S_h^0}}\frac{\overline{c}_h(\bm{w}_h;T_h,s_h)}{\interleave \bm{w}_h\interleave\cdot \interleave T_h\interleave\cdot\interleave s_h\interleave},\label{Mh}\\
\lVert \bm{f}\rVert_h& := \sup\limits_{0\neq\bm{v}_h\in \bm{W}_h}\frac{(\bm{f},\bm{v}_{h0})}{\interleave \bm{v}_h\interleave},\label{f_h}\\
\lVert g\rVert_h &:= \sup\limits_{0\neq s_h\in S_h^0}\frac{(g,s_{h0})}{\interleave s_h\interleave}.\label{g_h}
\end{align}
From Lemma \ref{boundedness} we easily know  that  $\mathcal{N}_h, \mathcal{M}_h$ are bounded from above by a positive constant independent of the mesh size $h$.

\begin{thm} \label{exist}
The problem (\ref{pb3}) admits 
	  at least one solution $(\bm{u}_h,T_h)\in \bm{W}_h \times S_h^0$.
\end{thm}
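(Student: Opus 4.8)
The plan is to prove existence by a fixed-point argument exploiting the finite dimensionality of $\bm{W}_h \times S_h^0$, the skew-symmetry relations \eqref{3356}, and the fact that the temperature can be controlled independently of the velocity. The key structural observation is that, for a \emph{fixed} velocity, the temperature equation is linear and coercive, so it can be eliminated, reducing \eqref{pb3} to a single nonlinear equation in $\bm{u}_h$ to which a Brouwer-type argument applies.

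First I would reduce the coupled system to the velocity alone. For fixed $\bm{u}_h \in \bm{W}_h$, the second equation of \eqref{pb3} is linear in $T_h$, and the bilinear form $\overline{A}_h(\bm{u}_h;\cdot,\cdot)$ is coercive since $\overline{A}_h(\bm{u}_h;s_h,s_h) = \kappa \interleave s_h\interleave^2$ by \eqref{Core-Bar-A}. Hence it has a unique solution, defining a map $\mathcal{S}:\bm{W}_h \to S_h^0$, $T_h = \mathcal{S}(\bm{u}_h)$. Testing with $s_h = \mathcal{S}(\bm{u}_h)$ and using the definition \eqref{g_h} of $\lVert g\rVert_h$ yields the a priori bound
\[
\interleave \mathcal{S}(\bm{u}_h)\interleave \leq \kappa^{-1}\lVert g\rVert_h,
\]
which is \emph{independent of} $\bm{u}_h$. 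Because the coefficients of this coercive linear system depend polynomially on $\bm{u}_h$ through $\overline{c}_h$, the map $\mathcal{S}$ is continuous.

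Next I would substitute $T_h = \mathcal{S}(\bm{u}_h)$ into the first equation of \eqref{pb3} and define $\mathcal{F}:\bm{W}_h \to \bm{W}_h$ via the Riesz representation with respect to the inner product inducing $\interleave\cdot\interleave$,
\[
\bigl(\mathcal{F}(\bm{u}_h),\bm{v}_h\bigr) := A_h(\bm{u}_h;\bm{u}_h,\bm{v}_h) - d_h(\mathcal{S}(\bm{u}_h),\bm{v}_h) - (\bm{f},\bm{v}_{h0}),\quad \forall \bm{v}_h \in \bm{W}_h,
\]
so that a zero of $\mathcal{F}$ produces, together with $\mathcal{S}(\bm{u}_h)$, a solution of \eqref{pb3}. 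This $\mathcal{F}$ is continuous since $A_h(\bm{u}_h;\bm{u}_h,\cdot)$ is polynomial in $\bm{u}_h$, $d_h$ is bilinear, and $\mathcal{S}$ is continuous. Testing with $\bm{v}_h = \bm{u}_h$ makes the convective term vanish by \eqref{3356}, and using $A_h(\bm{u}_h;\bm{u}_h,\bm{u}_h) = \Pr\interleave\bm{u}_h\interleave^2$ from Lemma \ref{lem31}, the bound \eqref{continuous result dh}, the a priori estimate above, and \eqref{f_h}, I obtain
\[
\bigl(\mathcal{F}(\bm{u}_h),\bm{u}_h\bigr) \geq \Pr\interleave\bm{u}_h\interleave^2 - \bigl(C\Pr Ra\,\kappa^{-1}\lVert g\rVert_h + \lVert\bm{f}\rVert_h\bigr)\interleave\bm{u}_h\interleave.
\]
Choosing $\rho := \Pr^{-1}\bigl(C\Pr Ra\,\kappa^{-1}\lVert g\rVert_h + \lVert\bm{f}\rVert_h\bigr) + 1$, the right-hand side is strictly positive on the sphere $\interleave\bm{u}_h\interleave = \rho$. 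The standard consequence of Brouwer's fixed-point theorem on the finite-dimensional space $\bm{W}_h$ then yields $\bm{u}_h^\ast$ with $\interleave\bm{u}_h^\ast\interleave \leq \rho$ and $\mathcal{F}(\bm{u}_h^\ast) = 0$, so that $(\bm{u}_h^\ast,\mathcal{S}(\bm{u}_h^\ast))$ solves \eqref{pb3}.

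The decisive feature, and the step I would treat most carefully, is that this construction needs \emph{no} smallness assumption on the data or on $Ra$. A direct Brouwer argument on the product space $\bm{W}_h \times S_h^0$ would generate the indefinite cross term $-d_h(T_h,\bm{u}_h)$, whose coefficient $\Pr Ra$ cannot be absorbed into the diagonal coercivity $\Pr\interleave\bm{u}_h\interleave^2 + \kappa\interleave T_h\interleave^2$ when $Ra$ is large; no reweighting of the product norm repairs this. Eliminating $T_h$ through the uniformly bounded, continuous operator $\mathcal{S}$ is precisely what turns that cross term into a bounded lower-order perturbation and restores coercivity on a large sphere. The only remaining technical point is the continuity of $\mathcal{S}$, i.e. continuous dependence of the solution of a coercive linear system on the convection field, which is routine in finite dimensions.
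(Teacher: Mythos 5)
Your proof is correct and rests on the same central structural idea as the paper's: for each fixed $\bm{u}_h\in\bm{W}_h$ the temperature equation is linear and coercive by \eqref{Core-Bar-A}, so one eliminates $T_h$ through a solution operator (your $\mathcal{S}$, the paper's $F$) satisfying the uniform bound $\interleave F(\bm{u}_h)\interleave\le\kappa^{-1}\lVert g\rVert_h$, and then solves a velocity-only fixed-point problem; your remark that a direct Brouwer argument on the product space $\bm{W}_h\times S_h^0$ fails for large $Ra$ is exactly the reason both proofs proceed this way. Where you genuinely diverge is the fixed-point machinery. The paper defines $\mathcal{A}(\bm{u}_h)$ as the solution of the coercive problem $a_h(\mathcal{A}(\bm{u}_h),\bm{v}_h)=-c_h(\bm{u}_h;\bm{u}_h,\bm{v}_h)+d_h(F(\bm{u}_h),\bm{v}_h)+(\bm{f},\bm{v}_{h0})$ and invokes the Leray--Schauder principle, which obliges it to prove a Lipschitz-type estimate for $\mathcal{A}$, appeal to Arzel\`a--Ascoli for compactness, and verify boundedness of the sets $\{\bm{v}_h:\bm{v}_h=\lambda\mathcal{A}\bm{v}_h\}$ for $0\le\lambda\le1$. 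You instead form the residual map $\mathcal{F}$ via Riesz representation (legitimate, since $\interleave\cdot\interleave$ is induced by an inner product and is a norm on $\bm{W}_h$) and apply the standard Brouwer corollary: a continuous map on a finite-dimensional inner-product space with $(\mathcal{F}(x),x)\ge 0$ on a sphere has a zero in the ball. Your coercivity computation on the sphere of radius $\rho$ is correct, and in the finite-dimensional setting your route is leaner --- continuity of $\mathcal{S}$ and of the polynomial nonlinearities is all that is required, and the compactness discussion, which is somewhat artificial for maps between finite-dimensional spaces, disappears. The one thing the paper's heavier route buys is the quantitative estimate \eqref{T1-T2} for $F(\bm{u}_{1h})-F(\bm{u}_{2h})$, which it reuses immediately in the uniqueness proof of Theorem \ref{thm32}; following your approach one would still need to establish that estimate separately there.
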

\begin{proof}
	First, by Lemma \ref{lem31} it is easy to see that, for a given $\bm{u}_h\in \bm{W}_h$, the bilinear form 
	$\overline{A}_h(\bm{u}_h;\cdot,\cdot) 
	$ is   continuous and coercive  on $S_h^0\times S_h^0$. Hence, by   Lax-Milgram theorem there is a unique  $T_h\in S_h^0$ such that the second equation of (\ref{pb3}) holds.
	
	Define a mapping $F: \bm{W}_h\rightarrow S_h^0$ by $F(\bm{u}_h) = T_h$. Then the thing left is  to show that there exists at least one $\bm{u}_h\in \bm{W}_h$ such that 
	\begin{equation}\label{pb4}
	A_h(\bm{u}_h;\bm{u}_h,\bm{v}_h)=a_h(\bm{u}_h,\bm{v}_h) + c_h(\bm{u}_h;\bm{u}_h,\bm{v}_h) = d_h(F(\bm{u}_h),\bm{v}_h) + (\bm{f},\bm{v}_{h0}), \forall \bm{v}_h\in \bm{W}_h.
	\end{equation}
		
Take $s_h = T_h$ in the second equation of (\ref{pb3}),  and apply  (\ref{g_h}) and \eqref{Core-Bar-A} to   get
	\begin{equation*}
	\kappa \interleave T_h\interleave^2 = (g,T_{h0})\leq \lVert g\rVert_h\cdot \interleave T_h\interleave,
	\end{equation*}
	which yields 
	\begin {equation}\label{regular estimates Th}
	\interleave F(\bm{u}_h)\interleave = \interleave T_h\interleave \leq \kappa^{-1}\lVert g\rVert_h.
	\end{equation}
Take $\bm{v}_h = \bm{u}_h$ in (\ref{pb4}), and we obtain
	\begin{align*}
	\Pr\interleave \bm{u}_h\interleave^2&= d_h(F(\bm{u}_h),\bm{u}_h) + (\bm{f},\bm{u}_{h0})  \\
	&\leq (\Pr Ra\interleave F(\bm{u}_{h})\interleave + \lVert \bm{f}\rVert_h)\interleave \bm{u}_h\interleave \\
	&\leq (\Pr Ra\kappa ^{-1}\lVert g\rVert_h + \lVert \bm{f}\rVert_h)\interleave \bm{u}_h\interleave.
	\end{align*}
This indicates 
\begin{align}\label{stabs}
		 \interleave \bm{u}_h\interleave\leq   Ra\kappa ^{-1}\lVert g\rVert_h +{\Pr}^{-1} \lVert \bm{f}\rVert_h.
\end{align}		 
By Lemma \ref{boundedness} and (\ref{f_h}),  we also have
	\begin{equation*}
	\lvert -c_h(\bm{u}_h;\bm{u}_h,\bm{v}_h) +d_h(F(\bm{u}_h),\bm{v}_h) + (\bm{f},\bm{v}_{h0})\rvert
	\apprle \big( \interleave \bm{u}_h\interleave^2+\Pr  Ra\interleave F(\bm{u}_h)\interleave + \lVert \bm{f}\rVert_h\big)\interleave \bm{v}_h\interleave.
	\end{equation*}
	
	Now we  introduce another    mapping, $\mathcal{A}: \bm{W}_h\rightarrow \bm{W}_h$,  defined by $\mathcal{A}(\bm{u}_h) = \bm{w}$, where $\bm{w}\in  \bm{W}_h$ is   determined by
	\begin{equation}\label{pbw}
	a_h(\bm{w},\bm{v}_h) = -c_h(\bm{u}_h;\bm{u}_h,\bm{v}_h) + d_h(F(\bm{u}_h),\bm{v}_h) + (\bm{f},\bm{v}_{h0}), \forall \bm{v}_h\in \bm{W}_h.
	\end{equation}
		Clearly, $\bm{u}_h$ is a solution to (\ref{pb4}) if it is a solution to
	\begin{equation*}
	\mathcal{A}(\bm{u}_h) = \bm{u}_h.
	\end{equation*}
	To show this system has a solution, from the Leray-Schauder's principle it suffices to prove the following two assertions:
	 (i) $\mathcal{A}$ is a continuous and compact mapping;	 
	  (ii) for any   $0\leq \lambda \leq 1$, the set $\bm{W}_{\lambda,h}:=\{\bm{v}_h\in \bm{W}_h: \bm{v}_h = \lambda \mathcal{A}\bm{v}_h\}$ is bounded.
	  
	Let $\bm{u}_{1h},\bm{u}_{2h}\in \bm{W}_h$, set $\bm{w}_2 = \mathcal{A}(\bm{u}_{2h})$ and $\bm{w}_1 = \mathcal{A}(\bm{u}_{1h})$, then we obtain
	\begin{align}
	a_h(\bm{w}_1,\bm{v}_h) = -c_h(\bm{u}_{1h};\bm{u}_{1h},\bm{v}_h) + d_h(F(\bm{u}_{1h}),\bm{v}_h) + (\bm{f},\bm{v}_{h0}),\label{pbw1}\\
	a_h(\bm{w}_2,\bm{v}_h) = -c_h(\bm{u}_{2h};\bm{u}_{2h},\bm{v}_h) + d_h(F(\bm{u}_{2h}),\bm{v}_h) + (\bm{f},\bm{v}_{h0}).\label{pbw2}
	\end{align}  
	Subtracting \eqref{pbw2} from \eqref{pbw1},  and taking $\bm{v}_h =\bm{w}: = \bm{w}_1-\bm{w}_2$, we get
	\begin{equation}\label{355}
	a_h(\bm{w},\bm{w}) = -c_h(\bm{u}_{1h}-\bm{u}_{2h};\bm{u}_{1h},\bm{w}) -c_h(\bm{u}_{2h};\bm{u}_{1h}-\bm{u}_{2h},\bm{w}) + d_h(F(\bm{u}_{1h})-F(\bm{u}_{2h}),\bm{w}).
	\end{equation}
	Substitute $T_h=F(\bm{u}_{1h})$ and  $T_h=F(\bm{u}_{2h})$  into the second equation of (\ref{pb3}),  respectively, and subtract the two resultant equations each other, then, in view of \eqref{224}, we   have	  
	\begin{equation*}
	\overline{a}_h(F(\bm{u}_{1h})-F(\bm{u}_{2h}),s_h) = -\overline{c}_h(\bm{u}_{1h}-\bm{u}_{2h};F(\bm{u}_{1h}),s_h) -\overline{c}_h(\bm{u}_{2h};F(\bm{u}_{1h})-F(\bm{u}_{2h}),s_h) , \forall s_h\in S_h^0,
	\end{equation*}
	Taking $s_h = F(\bm{u}_{1h})-F(\bm{u}_{2h})$ in this equation, together with \eqref{3356}, \eqref{regular estimates Th}, and Lemma \ref{boundedness}, leads to 
	\begin{equation}\label{T1-T2}
	\begin{aligned}
	\kappa \interleave F(\bm{u}_{1h})-F(\bm{u}_{2h})\interleave &\apprle \interleave \bm{u}_{1h}-\bm{u}_{2h}\interleave\cdot \interleave F(\bm{u}_{1h})\interleave\\
	&\leq \kappa^{-1} \interleave \bm{u}_{1h}-\bm{u}_{2h}\interleave\cdot \lVert g\rVert_h.
	\end{aligned}
	\end{equation}
	As a result, from \eqref{355} and \eqref{stabs}  it follows
	\begin{align*}
	\interleave \mathcal{A}(\bm{u}_{1h})-\mathcal{A}(\bm{u}_{2h})\interleave&=\interleave \bm{w}\interleave\apprle ({\Pr}^{-1}(\interleave \bm{u}_{1h}\interleave+\interleave \bm{u}_{2h}\interleave) + \kappa^{-2}Ra\lVert g\rVert_h)\interleave \bm{u}_{1h}-\bm{u}_{2h}\interleave\\
	&\leq \big(2{\Pr}^{-1}(Ra\kappa ^{-1}\lVert g\rVert_h +{\Pr}^{-1} \lVert \bm{f}\rVert_h) + \kappa^{-2}Ra\lVert g\rVert_h\big)\interleave \bm{u}_{1h}-\bm{u}_{2h}\interleave,
	\end{align*}	
which means that   $\mathcal{A}$ is equicontinuous and uniformly bounded. Thus, $\mathcal{A}$ is compact by the Arzel\'{a}-Ascoli theorem\cite{brezis2010functional}.
	
	It remains to show (ii).  If $\lambda = 0$, then $\bm{W}_{\lambda,h}=\{0\}$.   For $\lambda \in (0,1]$ and  $\bm{v}_h\in \bm{W}_{\lambda,h}$, by  \eqref{pbw} and \eqref{3356}   we have
	\begin{align*}
	\lambda^{-1}a_h(\bm{v}_h,\bm{v}_h) =a_h(\mathcal{A}\bm{v}_h,\bm{v}_h) &= -c_h(\bm{v}_h;\bm{v}_h,\bm{v}_h) + d_h(F(\bm{v}_h),\bm{v}_h) + (\bm{f},\bm{v}_{h0})\\
&=  d_h(F(\bm{v}_h),\bm{v}_h) + (\bm{f},\bm{v}_{h0}),
	\end{align*}
which implies
	\begin{equation*}
	\interleave \bm{v}_h\interleave \leq \lambda Ra\interleave F(\bm{v}_h)\interleave + \lambda {\Pr}^{-1}\lVert \bm{f}\rVert_h\\
	\leq \lambda Ra\kappa^{-1}\lVert g\rVert_h + \lambda {\Pr}^{-1}\lVert \bm{f}\rVert_h.
	\end{equation*}
	This completes the proof.  
	\end{proof}

We now give a global uniqueness criteria for the case of small data (small Rayleigh number $Ra$).
\begin{thm}\label{thm32}
	 Suppose
	\begin{equation}\label{assu}
	({\Pr}^{-1}\mathcal{N}_hRa\kappa^{-1}+\mathcal{M}_h Ra\kappa^{-2})\lVert g\rVert_h + \mathcal{N}_h{\Pr}^{-2}\lVert \bm{f}\rVert_h < 1 .
	\end{equation}
	 Then the problem (\ref{pb3}) admits a unique solution $(\bm{u}_h,T_h)\in \bm{W}_h \times S_h^0$ with $T_h=F(\bm{u}_h)$. 
\end{thm}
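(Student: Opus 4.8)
The existence of a solution is already furnished by Theorem \ref{exist}, so the plan is to establish only uniqueness under \eqref{assu}, arguing by contradiction. Suppose $(\bm{u}_{1h},T_{1h})$ and $(\bm{u}_{2h},T_{2h})$ are two solutions of \eqref{pb3} with $T_{ih}=F(\bm{u}_{ih})$, and set $\bm{e}_u:=\bm{u}_{1h}-\bm{u}_{2h}$ and $e_T:=T_{1h}-T_{2h}$. Since $\bm{W}_h$ is a linear space, $\bm{e}_u\in\bm{W}_h$ and $e_T\in S_h^0$; this membership is exactly what will later allow me to invoke the sharp constants $\mathcal{N}_h,\mathcal{M}_h$ of \eqref{Nh}--\eqref{Mh} together with the vanishing property \eqref{3356}.

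First I would subtract the two momentum equations. Using the trilinearity of $c_h(\cdot;\cdot,\cdot)$ and the splitting $c_h(\bm{u}_{1h};\bm{u}_{1h},\cdot)-c_h(\bm{u}_{2h};\bm{u}_{2h},\cdot)=c_h(\bm{e}_u;\bm{u}_{1h},\cdot)+c_h(\bm{u}_{2h};\bm{e}_u,\cdot)$, obtained by inserting the intermediate term $c_h(\bm{u}_{2h};\bm{u}_{1h},\cdot)$, the difference equation tested against $\bm{v}_h=\bm{e}_u$ becomes, after applying the coercivity \eqref{coercivity ah} and the fact that \eqref{3356} annihilates $c_h(\bm{u}_{2h};\bm{e}_u,\bm{e}_u)$,
\[ \Pr\interleave\bm{e}_u\interleave^2=-c_h(\bm{e}_u;\bm{u}_{1h},\bm{e}_u)+d_h(e_T,\bm{e}_u). \]
The two right-hand terms I would bound by \eqref{Nh} and \eqref{continuous result dh}, giving $\Pr\interleave\bm{e}_u\interleave^2\le \mathcal{N}_h\interleave\bm{u}_{1h}\interleave\interleave\bm{e}_u\interleave^2+\Pr Ra\,\interleave e_T\interleave\interleave\bm{e}_u\interleave$.

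Next I would treat the temperature difference identically: subtracting the two energy equations, decomposing $\overline{c}_h(\bm{u}_{1h};T_{1h},\cdot)-\overline{c}_h(\bm{u}_{2h};T_{2h},\cdot)=\overline{c}_h(\bm{e}_u;T_{1h},\cdot)+\overline{c}_h(\bm{u}_{2h};e_T,\cdot)$, and testing with $s_h=e_T$, so that \eqref{3356} kills $\overline{c}_h(\bm{u}_{2h};e_T,e_T)$ and \eqref{coercivity ahbar} supplies coercivity. Combining \eqref{Mh} with the a priori bound $\interleave T_{1h}\interleave\le\kappa^{-1}\lVert g\rVert_h$ from \eqref{regular estimates Th} then yields the Lipschitz-type estimate $\interleave e_T\interleave\le \mathcal{M}_h\kappa^{-2}\lVert g\rVert_h\,\interleave\bm{e}_u\interleave$, which is precisely the mechanism already exploited for \eqref{T1-T2}.

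Finally I would insert this temperature estimate and the velocity a priori bound $\interleave\bm{u}_{1h}\interleave\le Ra\kappa^{-1}\lVert g\rVert_h+\Pr^{-1}\lVert\bm{f}\rVert_h$ from \eqref{stabs} into the displayed velocity inequality. Assuming $\bm{e}_u\neq\bm{0}$ and dividing by $\Pr\interleave\bm{e}_u\interleave^2$, the resulting inequality rearranges exactly into $1\le {\Pr}^{-1}\mathcal{N}_hRa\kappa^{-1}\lVert g\rVert_h+\mathcal{M}_hRa\kappa^{-2}\lVert g\rVert_h+\mathcal{N}_h{\Pr}^{-2}\lVert\bm{f}\rVert_h$, contradicting \eqref{assu}. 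Hence $\bm{e}_u=\bm{0}$, and the temperature estimate then forces $e_T=\bm{0}$, so the solution is unique. I expect the main obstacle to be the bookkeeping in the two trilinear decompositions and, above all, checking that each argument of the resulting trilinear terms lies in $\bm{W}_h$ (respectively $S_h^0$), so that the sharp constants $\mathcal{N}_h,\mathcal{M}_h$, rather than generic hidden constants, may be used; it is precisely this sharpness that makes the final inequality coincide with the negation of \eqref{assu}.
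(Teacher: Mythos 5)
Your proposal is correct and follows essentially the same route as the paper: subtract the two momentum (resp.\ energy) equations, decompose the trilinear terms so that \eqref{3356} kills the term $c_h(\bm{u}_{2h};\bm{e}_u,\bm{e}_u)$ (resp.\ $\overline{c}_h(\bm{u}_{2h};e_T,e_T)$), derive the Lipschitz bound $\interleave e_T\interleave\le\mathcal{M}_h\kappa^{-2}\lVert g\rVert_h\interleave\bm{e}_u\interleave$ as in \eqref{T1-T2}, and combine it with \eqref{stabs} to contradict \eqref{assu}. The only cosmetic difference is that you phrase the conclusion as an explicit division by $\Pr\interleave\bm{e}_u\interleave^2$ rather than the paper's strict-inequality contradiction, which is equivalent.
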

\begin{proof} By Theorem \ref{exist}, let 
%
%
 $\bm{u}_{1h}, \bm{u}_{2h}\in \bm{W}_h$ be two  solutions to the problem (\ref{pb4}). Then  it suffices to show $\bm{u}_{1h}=\bm{u}_{2h}$.  In fact, we have
	\begin{align*}
	a_h(\bm{u}_{1h},\bm{v}_h) = -c_h(\bm{u}_{1h};\bm{u}_{1h},\bm{v}_h) + d_h(F(\bm{u}_{1h}),\bm{v}_h) + (\bm{f},\bm{v}_{h0}),\\
	a_h(\bm{u}_{2h},\bm{v}_h) = -c_h(\bm{u}_{2h};\bm{u}_{2h},\bm{v}_h) + d_h(F(\bm{u}_{2h}),\bm{v}_h) + (\bm{f},\bm{v}_{h0}).
	\end{align*}  
	Subtracting the above two equations each other with $\bm{v}_h = \bm{u}_{1h} - \bm{u}_{2h}$, and using \eqref{3356},  we obtain
	\begin{equation*}
	a_h(\bm{u}_{1h}-\bm{u}_{2h},\bm{u}_{1h}-\bm{u}_{2h}) = -c_h(\bm{u}_{1h}-\bm{u}_{2h};\bm{u}_{1h},\bm{u}_{1h}-\bm{u}_{2h}) + d_h(F(\bm{u}_{1h})-F(\bm{u}_{2h}),\bm{u}_{1h}-\bm{u}_{2h}),
	\end{equation*} 
which, together with Lemma \ref{boundedness}, (\ref{T1-T2}) and \eqref{stabs},  yields
	\begin{align*}
	\Pr  \interleave \bm{u}_{1h}-\bm{u}_{2h}\interleave^2 &\leq \mathcal{N}_h\interleave \bm{u}_{1h}-\bm{u}_{2h}\interleave^2\cdot \interleave \bm{u}_{1h}\interleave + \Pr  Ra\interleave F(\bm{u}_{1h})-F(\bm{u}_{2h})\interleave\cdot \interleave \bm{u}_{1h}-\bm{u}_{2h}\interleave\\
	&\leq \mathcal{N}_h\interleave \bm{u}_{1h}-\bm{u}_{2h}\interleave^2\cdot \interleave \bm{u}_{1h}\interleave + \mathcal{M}_h\Pr  Ra\kappa^{-2}\cdot \interleave \bm{u}_{1h}-\bm{u}_{2h}\interleave^2\cdot\lVert g\rVert_h,\\
	&\leq 
	\big((\mathcal{N}_hRa\kappa^{-1}+\mathcal{M}_h\Pr  Ra\kappa^{-2})\lVert g\rVert_h + \mathcal{N}_h{\Pr}^{-1}\lVert \bm{f}\rVert_h\big)\interleave \bm{u}_{1h}-\bm{u}_{2h}\interleave^2.
	\end{align*}
If $\bm{u}_{1h} \neq \bm{u}_{2h}$, then, by  the assumption \eqref{assu}, we further have
	\begin{align*}
	\Pr  \interleave \bm{u}_{1h}-\bm{u}_{2h}\interleave^2 
	< \Pr  \interleave \bm{u}_{1h}-\bm{u}_{2h}\interleave^2,
	\end{align*}
which contradicts.   Therefore $\bm{u}_{1h} = \bm{u}_{2h}$.
\end{proof}

%

\section{A priori error estimates}
This section is devoted to the error estimation of the WG scheme (\ref{pb2}). We set
\begin{equation*}
\bm{I}_h\bm{u}:=\{\bm{P}_k^{RT}\bm{u},\bm{Q}_l^b\bm{u}\},J_hp:=\{Q_{k-1}^0p,Q_k^bp\},H_h T:=\{Q_k^0T,Q_l^bT\}.
\end{equation*}
We recall that  $k\geq 1$ and $l=k, k-1$.
\begin{lem}\label{E-N}
	For any $\bm{w},\bm{u}\in \bm{W}$, $T\in H_0^1(\Omega)$, $\bm{v}_h\in \bm{V}_h^0$ and $s_h\in S_h^0$, it holds
	\begin{align}
	c_h(\bm{I}_h\bm{w};\bm{I}_h\bm{u},\bm{v}_h) =& (\nabla\cdot (\bm{u}\otimes\bm{w}),\bm{v}_{h0})+E_N(\bm{w};\bm{u},\bm{v}_h),\label{c_h}\\
	\overline{c}_h(\bm{I}_h\bm{u};H_hT,s_h) =& (\nabla\cdot (\bm{u}T),s_{h0})+\overline{E}_N(\bm{u};T,s_h),\label{c_hhat}
	\end{align}
	where 
	\begin{equation*}
	\begin{aligned}
	E_N(\bm{w};\bm{u},\bm{v}_h) :=& \frac{1}{2}(\bm{u}\otimes\bm{w}-\bm{P}_k^{RT}\bm{u}\otimes\bm{P}_k^{RT}\bm{w},\nabla_h\bm{v}_{h0})-\frac{1}{2}\langle (\bm{u}\otimes\bm{w}-\bm{Q}_l^b\bm{u}\otimes\bm{Q}_l^b\bm{w})\cdot\bm{n},\bm{v}_{h0}\rangle_{\partial \mathcal{T}_h^f}\\
	&\quad -\frac{1}{2}(\bm{w}\cdot\nabla\bm{u}-\bm{P}_k^{RT}\bm{w}\cdot\nabla_h\bm{P}_k^{RT}\bm{u},\bm{v}_{h0})-\frac{1}{2}\langle \bm{v}_{hb}\otimes\bm{Q}_l^b\bm{w}\cdot\bm{n},\bm{P}_k^{RT}\bm{u}\rangle_{\partial\mathcal{T}_h^f},\\
	\overline{E}_N(\bm{u};T,s_h) :=& \frac{1}{2}(\bm{u}T-\bm{P}_k^{RT}\bm{u}Q_k^0T,\nabla_h s_{h0})-\frac{1}{2}\langle (\bm{u}T-\bm{Q}_l^b\bm{u}Q_l^b T)\cdot\bm{n},s_{h0}\rangle_{\partial \mathcal{T}_h}\\
	&\quad -\frac{1}{2}(\bm{u}\cdot\nabla T-\bm{P}_K^{RT}\bm{u}\cdot\nabla_h Q_k^0T,s_{h0})-\frac{1}{2}\langle (\bm{Q}_l^b\bm{u}s_{hb})\cdot\bm{n},Q_k^0T\rangle_{\partial\mathcal{T}_h}.
	\end{aligned}
	\end{equation*}
\end{lem}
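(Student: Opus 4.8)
The plan is to prove both identities \eqref{c_h} and \eqref{c_hhat} by directly expanding the left-hand sides through the definitions of $c_h$, $\overline c_h$ and the discrete weak divergence operator $\nabla_{w,k}\cdot$, and then comparing with the continuous convective terms after an element-wise integration by parts. Since the two identities are structurally identical (one vectorial, one scalar), I would carry out the argument in full for \eqref{c_h} and then indicate the obvious modifications for \eqref{c_hhat}.

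A preliminary observation is essential before any expansion: because $\bm u\in\bm W$ is pointwise divergence-free, the Raviart--Thomas projection $\bm P_k^{RT}\bm u$ actually lies in $[P_k(K)]^d$ on each $K$. Indeed, Lemma \ref{RT38} gives $(\nabla\cdot\bm P_k^{RT}\bm u,q_h)_K=(\nabla\cdot\bm u,q_h)_K=0$ for all $q_h\in P_k(K)$; choosing $q_h=\nabla\cdot\bm P_k^{RT}\bm u\in P_k(K)$ forces $\nabla\cdot\bm P_k^{RT}\bm u=0$, and Lemma \ref{RT3.6} then yields $\bm P_k^{RT}\bm u\in[P_k(K)]^d$. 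This is exactly what I need so that, in the second term of $c_h(\bm I_h\bm w;\bm I_h\bm u,\bm v_h)$, which pairs the weak divergence against $(\bm I_h\bm u)_0=\bm P_k^{RT}\bm u$, the defining equation of $\nabla_{w,k}\cdot$ (whose test functions must be degree-$k$ polynomials) is applicable.

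Next I would expand. Applying the definition of the discrete weak divergence row-by-row to the tensor $\bm I_h\bm u\otimes\bm I_h\bm w$ (interior part $\bm P_k^{RT}\bm u\otimes\bm P_k^{RT}\bm w$, trace part $\bm Q_l^b\bm u\otimes\bm Q_l^b\bm w$) gives the first half of $c_h$ as $\tfrac12[-(\bm P_k^{RT}\bm u\otimes\bm P_k^{RT}\bm w,\nabla_h\bm v_{h0})+\langle(\bm Q_l^b\bm u\otimes\bm Q_l^b\bm w)\cdot\bm n,\bm v_{h0}\rangle_{\partial\mathcal T_h^f}]$, while pairing against $\bm P_k^{RT}\bm u$ (now legitimate) turns the second half into $\tfrac12(\bm P_k^{RT}\bm w\cdot\nabla_h\bm P_k^{RT}\bm u,\bm v_{h0})-\tfrac12\langle\bm v_{hb}\otimes\bm Q_l^b\bm w\cdot\bm n,\bm P_k^{RT}\bm u\rangle_{\partial\mathcal T_h^f}$, using the contraction identity $(\bm v_{h0}\otimes\bm P_k^{RT}\bm w,\nabla_h\bm P_k^{RT}\bm u)=(\bm P_k^{RT}\bm w\cdot\nabla_h\bm P_k^{RT}\bm u,\bm v_{h0})$. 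On the continuous side I would split $(\nabla\cdot(\bm u\otimes\bm w),\bm v_{h0})$ into two equal halves: on the first I integrate by parts element-wise to produce $\tfrac12[-(\bm u\otimes\bm w,\nabla_h\bm v_{h0})+\langle(\bm u\otimes\bm w)\cdot\bm n,\bm v_{h0}\rangle_{\partial\mathcal T_h^f}]$, and on the second I use $\nabla\cdot(\bm u\otimes\bm w)=(\bm w\cdot\nabla)\bm u$ (valid since $\nabla\cdot\bm w=0$) to keep it as $\tfrac12(\bm w\cdot\nabla\bm u,\bm v_{h0})$. Subtracting these two halves from the two corresponding halves of $c_h$ reproduces precisely the four terms in the definition of $E_N$, which proves \eqref{c_h}.

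The scalar identity \eqref{c_hhat} follows by the same two-halves bookkeeping, with $T$ replacing one copy of the velocity and $H_hT=\{Q_k^0T,Q_l^bT\}$ replacing $\bm I_h\bm u$; here the pairing in the second term is against $(H_hT)_0=Q_k^0T\in P_k(K)$, so the weak-divergence definition applies with no extra work, and one uses $\nabla\cdot(\bm uT)=\bm u\cdot\nabla T$ together with $\nabla\cdot\bm u=0$. I expect the only genuinely delicate point to be the preliminary step ensuring $\bm P_k^{RT}\bm u\in[P_k(K)]^d$; once that is in place, the remainder is a careful but routine tracking of tensor contractions, traces, and signs.
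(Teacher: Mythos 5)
Your proof is correct and follows essentially the same route as the paper's: expand both halves of $c_h$ (resp.\ $\overline{c}_h$) via the defining equation of $\nabla_{w,k}\cdot$, integrate the continuous convective term by parts element-wise (using $\nabla\cdot\bm{w}=0$, resp.\ $\nabla\cdot\bm{u}=0$, for the nonconservative half), and collect the differences into $E_N$ (resp.\ $\overline{E}_N$). Your preliminary verification that $\bm{P}_k^{RT}\bm{u}\in[P_k(K)]^d$ is precisely the fact the paper records as \eqref{local RT} in Lemma \ref{lem44} and uses tacitly here, so making it explicit at this stage is a minor gain in rigor rather than a different approach.
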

\begin{proof}
	From the definition of weak divergence and Green's formula, we have 
	\begin{equation*}
	\begin{aligned}
	(\nabla_{w,k}\cdot(\bm{I}_h\bm{u}\otimes\bm{I}_h\bm{w}),\bm{v}_{h0}) =& (\nabla\cdot (\bm{u}\otimes\bm{w}),\bm{v}_{h0})+
	(\bm{u}\otimes\bm{w}-\bm{P}_k^{RT}\bm{u}\otimes\bm{P}_k^{RT}\bm{w},\nabla_h\bm{v}_{h0})\\
	&-\langle (\bm{u}\otimes\bm{w}-\bm{Q}_l^b\bm{u}\otimes\bm{Q}_l^b\bm{w})\cdot\bm{n},\bm{v}_{h0}\rangle_{\partial \mathcal{T}_h^f},\\
	(\nabla_{w,k}\cdot(\bm{v}_h\otimes\bm{I}_h\bm{w}),\bm{P}_k^{RT}\bm{u}) =& (\nabla\cdot (\bm{u}\otimes\bm{w}),\bm{v}_{h0})+(\bm{w}\cdot\nabla\bm{u}-\bm{P}_K^{RT}\bm{w}\cdot\nabla_h\bm{P}_k^{RT}\bm{u},\bm{v}_{h0})\\
	&+\langle \bm{v}_{hb}\otimes\bm{Q}_l^b\bm{w}\cdot\bm{n},\bm{P}_k^{RT}\bm{u}\rangle_{\partial\mathcal{T}_h^f},
	\end{aligned}
	\end{equation*}
which, together with the definition of the trilinear form $c_h(\cdot;\cdot,\cdot) $, yields	 (\ref{c_h}). 

Similarly, we can obtain   (\ref{c_hhat}).
\end{proof}

\begin{lem}\label{RTgeneral}
	Let $j, r$ be  nonnegative integers. For any   $K\in \mathcal{T}_h$ and  $ \bm{v}\in [H^r(K)]^d$,  the following estimates hold for the RT projection operator:
	\begin{align}
	\lvert \bm{v}-\bm{P}_j^{RT}\bm{v}\rvert_{1,2,K}&\apprle h_K^{r-1}\lvert \bm{v}\rvert_{r,2,K},\forall 1\leq r\leq j+1,\label{RT12}  \\
	\lvert \bm{v}-\bm{P}_j^{RT}\bm{v}\rvert_{0,3,K}&\apprle h_K^{r-\frac{d}{6}}\lvert \bm{v}\rvert_{r,2,K},\forall 0\leq r\leq j+1,\label{RT03}\\
	\lvert \bm{v}-\bm{P}_j^{RT}\bm{v}\rvert_{0,2,\partial K}&\apprle h_K^{r-\frac{1}{2}}\lvert \bm{v}\rvert_{r,2,K},\forall 1\leq r\leq j+1,  \label{RT02edge} \\
	\lvert \bm{v}-\bm{P}_j^{RT}\bm{v}\rvert_{0,3,\partial K}&\apprle h_K^{r-\frac{1}{3}-\frac{d}{6}}\lvert \bm{v}\rvert_{r,2,K},\forall 1\leq r\leq j+1. \label{RT03edge}
	\end{align}
\end{lem}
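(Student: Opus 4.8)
The goal is to prove four approximation estimates for the Raviart--Thomas projection $\bm{P}_j^{RT}$ in various norms, and the natural strategy is a scaling argument combined with the Bramble--Hilbert lemma, exactly as indicated by the phrase ``scaling arguments'' in the paper. The plan is to transfer each estimate to a fixed reference element $\hat K$, establish the bound there using polynomial-preservation properties of the projection, and then scale back to $K$, tracking the powers of $h_K$ carefully.

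First I would recall that the RT projection defined in Lemma \ref{RT} preserves polynomials of degree $j$ in the sense that $\bm{P}_j^{RT}\bm{q}=\bm{q}$ for all $\bm{q}\in[P_j(K)]^d$ (this follows from the uniqueness in Lemma \ref{RT}, since $[P_j(K)]^d\subset\bm{RT}_j(K)$ and the defining conditions \eqref{RT1}, \eqref{RT2} are satisfied by $\bm{q}$ itself). On the reference element, the operator $\hat{\bm{P}}_j^{RT}$ is a bounded linear map from $[H^r(\hat K)]^d$ into $\bm{RT}_j(\hat K)$, where boundedness in the relevant target norm ($|\cdot|_{1,2,\hat K}$, $|\cdot|_{0,3,\hat K}$, $|\cdot|_{0,2,\partial\hat K}$, or $|\cdot|_{0,3,\partial\hat K}$) follows from the trace theorem, the equivalence of norms on the finite-dimensional space $\bm{RT}_j(\hat K)$, and the continuity of the defining functionals. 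Since $\hat I-\hat{\bm{P}}_j^{RT}$ annihilates $[P_{r-1}(\hat K)]^d\subseteq[P_j(\hat K)]^d$ (valid precisely when $r\le j+1$), the Bramble--Hilbert lemma gives
\begin{equation*}
|\hat{\bm v}-\hat{\bm{P}}_j^{RT}\hat{\bm v}|_{*,\hat K}\apprle|\hat{\bm v}|_{r,2,\hat K}
\end{equation*}
for each of the four target seminorms $|\cdot|_*$.

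The remaining work is purely bookkeeping of scaling exponents. Using the affine map $F_K:\hat K\to K$ with $\|F_K\|\sim h_K$ and $|\det DF_K|\sim h_K^d$, a seminorm $|\cdot|_{m,p,K}$ scales like $h_K^{d/p-m}$ relative to $\hat K$, and a boundary seminorm $|\cdot|_{m,p,\partial K}$ scales like $h_K^{(d-1)/p-m}$. For the input side, $|\bm v|_{r,2,K}$ scales like $h_K^{d/2-r}|\hat{\bm v}|_{r,2,\hat K}$. Combining the scaling on both sides for each case yields the stated powers: for \eqref{RT12} the exponent is $(d/2-1)-(d/2-r)=r-1$; for \eqref{RT03} it is $(d/3)-(d/2-r)=r-d/6$; for \eqref{RT02edge} it is $((d-1)/2)-(d/2-r)=r-1/2$; and for \eqref{RT03edge} it is $((d-1)/3)-(d/2-r)=r-1/3-d/6$. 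Shape-regularity of $\mathcal T_h$ is what guarantees the implied constants are uniform across all $K$.

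The main obstacle, and the only step requiring genuine care rather than routine algebra, is verifying the reference-element boundedness of $\hat{\bm{P}}_j^{RT}$ in the $L^3$ and $L^3(\partial\hat K)$ seminorms appearing in \eqref{RT03} and \eqref{RT03edge}. Because $\bm{RT}_j(\hat K)$ is finite-dimensional, all norms on it are equivalent, so once we know $\hat{\bm{P}}_j^{RT}\hat{\bm v}$ is controlled in, say, the $H^1(\hat K)$ norm (which dominates $L^3$ and the boundary traces in the low dimensions $d=2,3$ by Sobolev embedding and the trace theorem), the $L^3$-type bounds follow; the subtlety is merely confirming that the defining functionals \eqref{RT1}--\eqref{RT2} are continuous on $[H^r(\hat K)]^d$ for the smallest admissible $r$, in particular that $r=0$ is allowed in \eqref{RT03} where only \eqref{RT1} with $j=0$ or the $L^2$-moment conditions enter. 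Everything else is the standard scaling dictionary applied term by term.
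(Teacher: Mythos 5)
Your proof is correct in substance but follows a genuinely different route from the paper. The paper does not pass to a reference element at all: it inserts the $L^2$ projection $\bm{Q}_{r-1}^0\bm{v}$, applies the triangle inequality, uses an inverse inequality on the polynomial difference $\bm{Q}_{r-1}^0\bm{v}-\bm{P}_j^{RT}\bm{v}$ to trade the target seminorm for $h_K^{-1}\|\cdot\|_{0,2,K}$ (or $h_K^{-d/6}\|\cdot\|_{0,2,K}$, etc., in the $L^3$ and boundary cases), and then closes the argument with the \emph{already established} $L^2$ approximation properties of $\bm{Q}_{r-1}^0$ (Lemma \ref{ineq}) and of $\bm{P}_j^{RT}$ (the last estimate in Lemma \ref{RT}). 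The payoff of that trick is exactly the point you identify as the ``main obstacle'' of your approach: the paper never has to verify boundedness of the reference RT operator in $L^3$, $L^2(\partial\hat K)$, or $L^3(\partial\hat K)$, because all the non-$L^2$ norms only ever act on polynomials, where inverse estimates apply. Your Bramble--Hilbert route is more systematic and would generalize to other norms, but it is the longer path here and carries one technical point you gloss over: the RT space and the normal-trace degrees of freedom \eqref{RT1} are preserved under the \emph{Piola} transform, not the plain pullback, so the commutation $\widehat{\bm{P}_j^{RT}\bm{v}}=\hat{\bm{P}}_j^{RT}\hat{\bm v}$ and the scaling dictionary must be written for the Piola map; on shape-regular simplices the extra factors $\|DF_K\|\,|\det DF_K|^{-1}$ are harmless and the stated exponents survive, but this step should be made explicit.

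One further caveat, which your proposal correctly senses but does not resolve, and which afflicts the statement itself rather than either proof: the case $r=0$ in \eqref{RT03} is not reachable by your argument (no $L^2$-boundedness of $\hat{\bm P}_j^{RT}$, since the edge moments $\langle\bm v\cdot\bm n_e,w_j\rangle_e$ are not continuous on $[L^2(\hat K)]^d$), nor by the paper's (the projection in Lemma \ref{RT} is only defined for $\bm v\in[H^1(K)]^d$, and the paper's ``follows similarly'' silently assumes $r\geq 1$). In the error analysis of Lemma \ref{lem4.3} only $r\geq 1$ is ever used, so this is a defect of the lemma's range of $r$, not of your proof; restricting \eqref{RT03} to $1\leq r\leq j+1$ makes both arguments complete.
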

\begin{proof}
	We only prove (\ref{RT12}), since the estimates (\ref{RT03})-(\ref{RT03edge}) follow similarly. 

For $ 1\leq r\leq j+1$, by the triangle inequality, the inverse inequality,   Lemma \ref{ineq}, and Lemma \ref{RT},  we get
	\begin{align*}
	\lvert \bm{v}-\bm{P}_j^{RT}\bm{v}\rvert_{1,2,K} &\leq \lvert \bm{v}-\bm{Q}_{r-1}^0\bm{v}\rvert_{1,2,K} +\lvert \bm{Q}_{r-1}^0\bm{v}-\bm{P}_j^{RT}\bm{v}\rvert_{1,2,K} \\
	&\apprle \lvert \bm{v}-\bm{Q}_{r-1}^0\bm{v}\rvert_{1,2,K} +h_K^{-1}\lvert \bm{Q}_{r-1}^0\bm{v}-\bm{P}_j^{RT}\bm{v}\rvert_{0,2,K} \\
	&\leq \lvert \bm{v}-\bm{Q}_{r-1}^0\bm{v}\rvert_{1,2,K} +h_K^{-1}\lvert \bm{Q}_{r-1}^0\bm{v}-\bm{v}\rvert_{0,2,K}+h_K^{-1}\lvert \bm{v}-\bm{P}_j^{RT}\bm{v}\rvert_{0,2,K} \\
	&\apprle  h_K^{r-1}\lvert \bm{v}\rvert_{r,2,K},
	\end{align*}
	i.e.   (\ref{RT12}) holds. 
\end{proof}
\begin{lem}\label{lem4.3}
	For $\bm{u}\in [H^{k+1}(\Omega_f)]^d$ with $\nabla\cdot\bm{u} = 0$ and $T\in H^{k+1}(\Omega)$, it holds
	\begin{align*}
	E_N(\bm{u};\bm{u},\bm{v}_h)&\apprle h^{k}\lVert \bm{u}\rVert_2\lVert \bm{u}\rVert_{k+1}\interleave \bm{v}_h\interleave, \forall \bm{v}_h\in \bm{V}_h^0,\\
	\overline{E}_N(\bm{u};T,s_h)&\apprle h^{k}(\lVert \bm{u}\rVert_2\lVert T\rVert_{k+1}+\lVert T\rVert_2\lVert \bm{u}\rVert_{k+1})\interleave s_h\interleave, \forall s_h\in S_h^0,
	\end{align*}
	for $l=k$ when $d = 2,3$, and for $l = k-1$ when $d=2$.
\end{lem}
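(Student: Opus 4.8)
The plan is to bound $E_N(\bm u;\bm u,\bm v_h)$ and $\overline E_N(\bm u;T,s_h)$ term by term, using the explicit four-term expressions from Lemma \ref{E-N}. In every term I would first \emph{telescope} the product so that each summand isolates a single projection error: for the volume products I write $\bm u\otimes\bm u-\bm P_k^{RT}\bm u\otimes\bm P_k^{RT}\bm u=(\bm u-\bm P_k^{RT}\bm u)\otimes\bm u+\bm P_k^{RT}\bm u\otimes(\bm u-\bm P_k^{RT}\bm u)$ and similarly $\bm u\cdot\nabla\bm u-\bm P_k^{RT}\bm u\cdot\nabla_h\bm P_k^{RT}\bm u=(\bm u-\bm P_k^{RT}\bm u)\cdot\nabla\bm u+\bm P_k^{RT}\bm u\cdot\nabla_h(\bm u-\bm P_k^{RT}\bm u)$, and analogously for the trace products with $\bm Q_l^b\bm u$. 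The factor carrying the test function is then controlled by $\interleave\bm v_h\interleave$ (resp. $\interleave s_h\interleave$): the volume gradient factor $\lVert\nabla_h\bm v_{h0}\rVert_0$ via \eqref{1seminorm v}, and the $L^{\tilde q}$ factors $\lVert\bm v_{h0}\rVert_{0,\tilde q}$, $\lVert s_{h0}\rVert_{0,\tilde q}$ via the discrete Sobolev embedding of Lemma \ref{discrete sobolev embedding}.

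For the two volume terms I apply H\"older's inequality with the split $(L^4,L^2,L^4)$. The first term is benign: the $L^2$ approximation estimate of the RT projection (Lemma \ref{RT}) gives $\lVert\bm u-\bm P_k^{RT}\bm u\rVert_{0}\apprle h^{k+1}\lVert\bm u\rVert_{k+1}$, one order better than required. The governing $h^k$ rate comes from the gradient part of the third term, where \eqref{RT12} gives $\lVert\nabla_h(\bm u-\bm P_k^{RT}\bm u)\rVert_0\apprle h^k\lVert\bm u\rVert_{k+1}$; pairing this with $\lVert\bm P_k^{RT}\bm u\rVert_{0,4}\apprle\lVert\bm u\rVert_2$ (Sobolev embedding plus stability of $\bm P_k^{RT}$) and $\lVert\bm v_{h0}\rVert_{0,4}\apprle\interleave\bm v_h\interleave$ yields exactly the $h^k\lVert\bm u\rVert_2\lVert\bm u\rVert_{k+1}\interleave\bm v_h\interleave$ bound.

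The boundary terms are the crux. Here I would apply H\"older on each $\partial K$ with a split such as $(L^3,L^2,L^6)$ or $(L^4,L^2,L^4)$, estimate the projection-error factor by the edge bounds of Lemma \ref{ineq} and \eqref{RT02edge}--\eqref{RT03edge}, and use the trace and inverse inequalities of Lemma \ref{trace inequality} to trade the surface $L^{\tilde q}(\partial K)$ norms of the discrete functions for volume $L^{\tilde q}(K)$ norms at the cost of explicit negative powers of $h_K$; summing over $K$ with Cauchy--Schwarz and invoking Lemma \ref{discrete sobolev embedding} then closes the estimate. For the last term, which involves the raw trace $\bm v_{hb}$ rather than a projection, I would write $\bm v_{hb}=(\bm v_{hb}-\bm Q_l^b\bm v_{h0})+\bm Q_l^b\bm v_{h0}$ so that the first piece is directly controlled by the stabilization part of $\interleave\bm v_h\interleave$, while the second piece is handled as above after exploiting that $\bm u$, and hence $\bm Q_l^b\bm u$ and $\bm P_k^{RT}\bm u\cdot\bm n$, is single-valued across interior faces.

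The hard part is the power counting in these boundary terms, and it is precisely what dictates the stated case distinction on $l$. With $l=k$ the edge approximation decays like $h_K^{k+1/2}$, and the inverse-inequality powers balance in both $d=2$ and $d=3$. With $l=k-1$ only $h_K^{k-1/2}$ is available, a half power short, so the surplus powers produced by the trace and inverse inequalities must absorb the deficit; this balance closes only when a sufficiently large H\"older exponent is admissible on the discrete factor, which by Lemma \ref{discrete sobolev embedding} is possible for $d=2$ (where $\tilde q$ may be any finite number) but not for $d=3$ (where $\tilde q\le 6$). Finally I would repeat the identical decomposition for $\overline E_N(\bm u;T,s_h)$; there the two telescoped summands of each product separately generate the factors $\lVert\bm u\rVert_2\lVert T\rVert_{k+1}$ and $\lVert T\rVert_2\lVert\bm u\rVert_{k+1}$, giving the claimed estimate.
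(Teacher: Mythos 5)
Your overall architecture coincides with the paper's: telescope each product so that every summand isolates a single projection error, bound the approximation factors by Lemma \ref{ineq}, Lemma \ref{RT} and Lemma \ref{RTgeneral}, trade surface norms of discrete functions for volume norms via Lemma \ref{trace inequality} and inverse estimates, and return to $\interleave\cdot\interleave$ through \eqref{1seminorm v} and Lemma \ref{discrete sobolev embedding}; the restriction on $l$ does indeed emerge from the power count on the face terms. The volume terms are fine as you describe them (your $(L^4,L^2,L^4)$ splits work just as well as the paper's $(L^2,L^\infty,L^2)$ ones).

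The gap is in the fourth face term $\tfrac12\langle \bm v_{hb}\otimes\bm Q_l^b\bm u\cdot\bm n,\bm P_k^{RT}\bm u\rangle_{\partial\mathcal T_h^f}$ (and its analogue $\tfrac12\langle (\bm Q_l^b\bm u\, s_{hb})\cdot\bm n, Q_k^0T\rangle_{\partial\mathcal T_h}$). Splitting $\bm v_{hb}=(\bm v_{hb}-\bm Q_l^b\bm v_{h0})+\bm Q_l^b\bm v_{h0}$ and declaring the first piece ``directly controlled by the stabilization'' does not close: the stabilization only gives $\lVert\bm v_{hb}-\bm Q_l^b\bm v_{h0}\rVert_{0,\partial K}=h_K^{1/2}\lVert\tau^{1/2}(\bm Q_l^b\bm v_{h0}-\bm v_{hb})\rVert_{0,\partial K}$, while $\lVert\bm P_k^{RT}\bm u\rVert_{0,\partial K}$ and $\lVert\bm Q_l^b\bm u\rVert_{0,\partial K}$ cost $h_K^{-1/2}$ each relative to volume norms, so this piece is only $O(1)$, not $O(h^k)$ --- it contains no projection--error factor in $\bm u$ at all. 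The missing idea is the cancellation $\langle\bm v_{hb}\otimes\bm Q_l^b\bm u\cdot\bm n,\bm Q_k^b\bm u\rangle_{\partial\mathcal T_h^f}=0$, which holds because $\bm v_{hb}$, $\bm Q_l^b\bm u$ and $\bm Q_k^b\bm u$ are single-valued on interior faces (so the two adjacent elements contribute opposite normals) and $\bm v_{hb}$ vanishes on $\partial\Omega_f$. This lets you replace $\bm P_k^{RT}\bm u$ by $\bm P_k^{RT}\bm u-\bm Q_k^b\bm u$, which is $O(h_K^{k+1/2})$ on $\partial K$, \emph{before} decomposing $\bm v_{hb}$; only then does every resulting piece carry the required power of $h$. (Note that $\bm P_k^{RT}\bm u$ itself is two-valued on interior faces --- only its normal moments against $P_k(e)$ are matched --- so a cancellation argument applied to your second piece $\bm Q_l^b\bm v_{h0}$, which is likewise two-valued, does not rescue the estimate either.) A similar device, which your sketch glosses over, is indispensable in the second face term when $l=k-1$: there $\lVert\bm u-\bm Q_{k-1}^b\bm u\rVert_{0,\partial K}$ is only $O(h_K^{k-1/2})$, and one must first replace $\bm v_{h0}$ by $\bm v_{h0}-\bm v_{hb}$ (again by single-valuedness of the flux) so that the test-function factor gains, rather than loses, the remaining half power of $h$.
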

\begin{proof}
	From the h\"{o}lder inequality, the sobolev inequality, and the projection properties, we have
	\begin{align*}
	&\lvert (\bm{u}\otimes \bm{u}-\bm{P}_k^{RT}\bm{u}\otimes\bm{P}_k^{RT}\bm{u},\nabla_h\bm{v}_{h0})\rvert \\
	\leq& \lvert ((\bm{u}-\bm{P}_k^{RT}\bm{u})\otimes \bm{u},\nabla_h\bm{v}_{h0})\rvert + \lvert (\bm{P}_k^{RT}\bm{u}\otimes(\bm{u}-\bm{P}_k^{RT}\bm{u}),\nabla_h\bm{v}_{h0})\rvert \\
	\apprle& \sum_{K\in \mathcal{T}_h}\lvert \bm{u}-\bm{P}_k^{RT}\bm{u}\rvert_{0,2,K}\lvert \bm{u}\rvert_{0,\infty,K}\lVert \nabla_h\bm{v}_{h0}\rVert_{0,2,K} + \sum_{K\in \mathcal{T}_h}\lvert \bm{u}-\bm{P}_k^{RT}\bm{u}\rvert_{0,2,K}\lvert \bm{P}_k^{RT}\bm{u}\rvert_{0,\infty,K}\lVert \nabla_h\bm{v}_{h0}\rVert_{0,2,K} \\
	\apprle& h^{k+1}\lvert \bm{u}\rvert_{0,\infty}\lvert \bm{u}\rvert_{k+1}\interleave \bm{v}_h\interleave \\
    \apprle& h^{k+1}\lVert \bm{u}\rVert_2\lVert \bm{u}\rVert_{k+1}\interleave \bm{v}_h\interleave.
	\end{align*}
	For $l=k$ when $d = 2,3$, and for $l = k-1$ when $d=2$, we have
	\begin{align*}
	&\lvert \langle (\bm{u}\otimes \bm{u}-\bm{Q}_l^b\bm{u}\otimes\bm{Q}_l^b\bm{u})\cdot \bm{n},\bm{v}_{h0}\rangle_{\partial\mathcal{T}_h^f}\rvert= \lvert \langle (\bm{u}\otimes \bm{u}-\bm{Q}_l^b\bm{u}\otimes\bm{Q}_l^b\bm{u})\cdot \bm{n},\bm{v}_{h0}-\bm{v}_{hb}\rangle_{\partial\mathcal{T}_h^f}\rvert \\
	\leq& \lvert \langle (\bm{u}-\bm{Q}_l^b\bm{u})\otimes \bm{u}\cdot \bm{n},\bm{v}_{h0}-\bm{v}_{hb}\rangle_{\partial\mathcal{T}_h^f}\rvert + \lvert \langle  \bm{Q}_l^b\bm{u}\otimes(\bm{u}-\bm{Q}_l^b\bm{u})\cdot \bm{n},\bm{v}_{h0}-\bm{v}_{hb}\rangle_{\partial\mathcal{T}_h^f}\rvert \\
	\apprle& \lvert \langle (\bm{u}-\bm{Q}_l^b\bm{u})\otimes (\bm{u}-\bm{Q}_k^0\bm{u})\cdot \bm{n},\bm{v}_{h0}-\bm{v}_{hb}\rangle_{\partial\mathcal{T}_h^f}\rvert + \lvert \langle (\bm{u}-\bm{Q}_l^b\bm{u})\otimes \bm{Q}_k^0\bm{u}\cdot \bm{n},\bm{v}_{h0}-\bm{v}_{hb}\rangle_{\partial\mathcal{T}_h^f}\rvert \\
	&+ \lvert \langle (\bm{Q}_l^0\bm{u}-\bm{Q}_l^b\bm{u})\otimes (\bm{u}-\bm{Q}_l^b\bm{u})\cdot \bm{n},\bm{v}_{h0}-\bm{v}_{hb}\rangle_{\partial\mathcal{T}_h^f}\rvert + \lvert \langle \bm{Q}_l^0\bm{u}\otimes(\bm{u}-\bm{Q}_l^b\bm{u}) \cdot \bm{n},\bm{v}_{h0}-\bm{v}_{hb}\rangle_{\partial\mathcal{T}_h^f}\rvert \\
	\leq& \sum_{K\in\mathcal{T}_h}(\lvert \bm{u}-\bm{Q}_l^b\bm{u}\rvert_{0,2,\partial K}\lvert \bm{u}-\bm{Q}_k^0\bm{u}\rvert_{0,2,\partial K}\lvert \bm{v}_{h0}-\bm{v}_{hb}\rvert_{0,\infty,\partial K} +\lvert \bm{u}-\bm{Q}_l^b\bm{u}\rvert_{0,2,\partial K}\lvert \bm{Q}_k^0\bm{u}\rvert_{0,\infty,\partial K}\lvert \bm{v}_{h0}-\bm{v}_{hb}\rvert_{0,2,\partial K}) \\
	&+\sum_{K\in\mathcal{T}_h}(\lvert \bm{Q}_l^0\bm{u}-\bm{Q}_l^b\bm{u}\rvert_{0,2,\partial K}\lvert \bm{u}-\bm{Q}_l^b\bm{u}\rvert_{0,2,\partial K}\lvert \bm{v}_{h0}-\bm{v}_{hb}\rvert_{0,\infty,\partial K} +\lvert \bm{u}-\bm{Q}_l^b\bm{u}\rvert_{0,2,\partial K}\lvert \bm{Q}_k^0\bm{u}\rvert_{0,\infty,\partial K}\lvert \bm{v}_{h0}-\bm{v}_{hb}\rvert_{0,2,\partial K}) \\
	\apprle& h^{l+1/2}\lvert \bm{u}\rvert_{l+1}h^{1/2}\lvert \bm{u}\rvert_1h^{1-d/2}\interleave \bm{v}_h\interleave + h^{l+1/2}\lvert \bm{u}\rvert_{l+1}\lvert \bm{u}\rvert_{0,\infty}h^{1/2}\interleave \bm{v}_h\interleave \\
	\apprle& h^{k}\lVert \bm{u}\rVert_{2}\lVert \bm{u}\rVert_{k+1}\interleave \bm{v}_h\interleave,
	\end{align*}	
	\begin{align*}
	&\lvert (\bm{u}\cdot \nabla\bm{u}-\bm{P}_k^{RT}\bm{u}\cdot\nabla_h\bm{P}_k^{RT}\bm{u},\bm{v}_{h0})\rvert \\
	\leq& \lvert ((\bm{u}-\bm{P}_k^{RT}\bm{u})\cdot \nabla\bm{u},\bm{v}_{h0})\rvert + \lvert (\bm{P}_k^{RT}\bm{u}\cdot
	 (\nabla\bm{u}-\nabla_h\bm{P}_k^{RT}\bm{u}),\bm{v}_{h0})\rvert \\
	\apprle& \sum_{K\in \mathcal{T}_h}\lvert \bm{u}-\bm{P}_k^{RT}\bm{u}\rvert_{0,3,K}\lvert \nabla\bm{u}\rvert_{0,2,K}\lVert \bm{v}_{h0}\rVert_{0,6,K} + \sum_{K\in \mathcal{T}_h}\lvert \nabla\bm{u}-\nabla\bm{P}_k^{RT}\bm{u}\rvert_{0,2,K}\lvert \bm{P}_k^{RT}\bm{u}\rvert_{0,\infty,K}\lVert \bm{v}_{h0}\rVert_{0,2,K} \\
	\apprle& h^{k+1-d/6}\lvert \bm{u}\rvert_{k+1}\lvert \bm{u}\rvert_{1,2}\lvert \bm{v}_{h0}\rvert_{1,2} + \lvert \bm{u}\rvert_{0,\infty}h^k\lvert \bm{u}\rvert_{k+1}\lvert \bm{v}_{h0}\rvert_{0,2} \\
	\apprle& h^{k}\lVert \bm{u}\rVert_2\lVert \bm{u}\rVert_{k+1}\interleave \bm{v}_h\interleave,
	\end{align*}
and	
	\begin{align*}
	&\lvert \langle \bm{v}_{hb}\otimes \bm{Q}_l^b\bm{u}\cdot \bm{n},\bm{P}_k^{RT}\bm{u}\rangle_{\partial\mathcal{T}_h^f}\rvert = \lvert \langle \bm{v}_{hb}\otimes \bm{Q}_l^b\bm{u}\cdot \bm{n},\bm{P}_k^{RT}\bm{u}-\bm{Q}_k^b\bm{u}\rangle_{\partial\mathcal{T}_h^f}\rvert \\
	\leq& \lvert \langle (\bm{v}_{h0}-\bm{v}_{hb})\otimes (\bm{Q}_l^b\bm{u}-\bm{Q}_l^0\bm{u})\cdot \bm{n},\bm{P}_k^{RT}\bm{u}-\bm{Q}_k^b\bm{u}\rangle_{\partial\mathcal{T}_h^f}\rvert + \lvert \langle \bm{v}_{h0}\otimes (\bm{Q}_l^b\bm{u}-\bm{Q}_l^0\bm{u})\cdot \bm{n},\bm{P}_k^{RT}\bm{u}-\bm{Q}_k^b\bm{u}\rangle_{\partial\mathcal{T}_h^f}\rvert \\
	&+\lvert \langle (\bm{v}_{h0}-\bm{v}_{hb})\otimes \bm{Q}_l^0\bm{u}\cdot \bm{n},\bm{P}_k^{RT}\bm{u}-\bm{Q}_k^b\bm{u}\rangle_{\partial\mathcal{T}_h^f}\rvert + \lvert \langle \bm{v}_{h0}\otimes \bm{Q}_l^0\bm{u}\cdot \bm{n},\bm{P}_k^{RT}\bm{u}-\bm{Q}_k^b\bm{u}\rangle_{\partial\mathcal{T}_h^f}\rvert \\
	\leq& \sum_{K\in\mathcal{T}_h}\lvert \bm{v}_{h0}-\bm{v}_{hb}\rvert_{0,\infty,\partial K}\lvert \bm{Q}_l^b\bm{u}-\bm{Q}_l^0\bm{u}\rvert_{0,2,\partial K}\lvert \bm{P}_k^{RT}\bm{u}-\bm{Q}_k^b\bm{u}\rvert_{0,2,\partial K} + \sum_{K\in\mathcal{T}_h}\lvert \bm{v}_{h0}\rvert_{0,\infty,\partial K}\lvert \bm{Q}_l^b\bm{u}-\bm{Q}_l^0\bm{u}\rvert_{0,2,\partial K}\lvert \bm{P}_k^{RT}\bm{u}-\bm{Q}_k^b\bm{u}\rvert_{0,2,\partial K} \\
	&+\sum_{K\in\mathcal{T}_h}\lvert \bm{v}_{h0}-\bm{v}_{hb}\rvert_{0,2,\partial K}\lvert \bm{Q}_l^0\bm{u}\rvert_{0,6,\partial K}\lvert \bm{P}_k^{RT}\bm{u}-\bm{Q}_k^b\bm{u}\rvert_{0,3,\partial K} + \sum_{K\in\mathcal{T}_h}\lvert \bm{v}_{h0}\rvert_{0,3,\partial K}\lvert \bm{Q}_l^0\bm{u}\rvert_{0,6,\partial K}\lvert \bm{P}_k^{RT}\bm{u}-\bm{Q}_k^b\bm{u}\rvert_{0,2,\partial K} \\
	\apprle& h^{1-d/2}\interleave \bm{v}_h\interleave h^{1/2}\lvert \bm{u}\rvert_1h^{k+1/2}\lvert \bm{u}\rvert_{k+1} + h^{-d/6}\lvert \bm{v}_{h0}\rvert_{0,6}h^{1/2}\lvert \bm{u}\rvert_1h^{k+1/2}\lvert \bm{u}\rvert_{k+1} \\
	&+ h^{1/2}\interleave \bm{v}_h\interleave h^{-1/6}\lvert \bm{u}\rvert_{0,6}h^{k+1-1/3-d/6}\lvert \bm{u}\rvert_{k+1} + h^{-1/3}\lvert \bm{v}_{h0}\rvert_{0,3}h^{-1/6}\lvert \bm{u}\rvert_{0,6}h^{k+1/2}\lvert \bm{u}\rvert_{k+1}  \\
	\apprle& h^{k}\lVert \bm{u}\rVert_{2}\lVert \bm{u}\rVert_{k+1}\interleave \bm{v}_h\interleave.
	\end{align*}
Similarly, we can obtain
\begin{align*}
&\lvert (\bm{u}T-\bm{P}_k^{RT}\bm{u}Q_k^0T,\nabla_hs_{h0})\rvert \\
\leq& \lvert (\bm{u}(T-Q_k^0T),\nabla_hs_{h0})\rvert + \lvert ((\bm{u}-\bm{P}_k^{RT}\bm{u})Q_k^0T,\nabla_hs_{h0})\rvert \\
\apprle& \sum_{K\in \mathcal{T}_h}\lvert T-Q_k^0T\rvert_{0,2,K}\lvert \bm{u}\rvert_{0,\infty,K}\lVert \nabla_hs_{h0}\rVert_{0,2,K} + \sum_{K\in \mathcal{T}_h}\lvert \bm{u}-\bm{P}_k^{RT}\bm{u}\rvert_{0,2,K}\lvert Q_k^0T\rvert_{0,\infty,K}\lVert \nabla_hs_{h0}\rVert_{0,2,K} \\
\apprle& h^{k+1}\lvert \bm{u}\rvert_{0,\infty}\lvert T\rvert_{k+1}\interleave s_h\interleave +h^{k+1}\lvert T\rvert_{0,\infty}\lvert \bm{u}\rvert_{k+1}\interleave s_h\interleave \\
\apprle& h^{k+1}\lVert \bm{u}\rVert_2\lVert T\rVert_{k+1}\interleave s_h\interleave +h^{k+1}\lVert T\rVert_2\lVert \bm{u}\rVert_{k+1}\interleave s_h\interleave,
\end{align*}
\begin{align*}
&\lvert (\bm{u}\cdot \nabla T-\bm{P}_k^{RT}\bm{u}\cdot\nabla_h Q_k^0T,s_{h0})\rvert \\
\leq& \lvert ((\bm{u}-\bm{P}_k^{RT}\bm{u})\cdot \nabla T,s_{h0})\rvert + \lvert (\bm{P}_k^{RT}\bm{u}\cdot
(\nabla T-\nabla_h Q_k^0T),s_{h0})\rvert \\
\apprle& \sum_{K\in \mathcal{T}_h}\lvert \bm{u}-\bm{P}_k^{RT}\bm{u}\rvert_{0,3,K}\lvert \nabla T\rvert_{0,2,K}\lVert s_{h0}\rVert_{0,6,K} + \sum_{K\in \mathcal{T}_h}\lvert \nabla T-\nabla_h Q_k^0T\rvert_{0,2,K}\lvert \bm{P}_k^{RT}\bm{u}\rvert_{0,\infty,K}\lVert s_{h0}\rVert_{0,2,K} \\
\apprle& h^{k+1-d/6}\lvert \bm{u}\rvert_{k+1}\lvert T\rvert_{1,2}\lvert s_{h0}\rvert_{1,2} + \lvert \bm{u}\rvert_{0,\infty}h^k\lvert T\rvert_{k+1}\lvert s_{h0}\rvert_{0,2} \\
\apprle& h^{k}\lVert \bm{u}\rVert_2\lVert T\rVert_{k+1}\interleave s_h\interleave +h^{k}\lVert T\rVert_2\lVert \bm{u}\rVert_{k+1}\interleave s_h\interleave.
\end{align*}
For $l=k$ when $d = 2,3$, and for $l = k-1$ when $d=2$, we have
\begin{align*}
&\lvert \langle (\bm{u}T-\bm{Q}_l^b\bm{u}Q_l^bT)\cdot \bm{n},s_{h0}\rangle_{\partial\mathcal{T}_h}\rvert =
 \lvert \langle (\bm{u}T-\bm{Q}_l^b\bm{u}Q_l^bT)\cdot \bm{n},s_{h0}-s_{hb}\rangle_{\partial\mathcal{T}_h}\rvert \\
\leq& \lvert \langle (\bm{u}(T-Q_l^bT))\cdot \bm{n},s_{h0}-s_{hb}\rangle_{\partial\mathcal{T}_h}\rvert + \lvert \langle ((\bm{u}-\bm{Q}_l^b\bm{u})Q_l^bT)\cdot \bm{n},s_{h0}-s_{hb}\rangle_{\partial\mathcal{T}_h}\rvert \\
\apprle& \lvert \langle (T-Q_l^bT)(\bm{u}-\bm{Q}_k^0\bm{u})\cdot \bm{n},s_{h0}-s_{hb}\rangle_{\partial\mathcal{T}_h}\rvert + \lvert \langle (T-Q_l^bT)\bm{Q}_k^0\bm{u}\cdot \bm{n},s_{h0}-s_{hb}\rangle_{\partial\mathcal{T}_h}\rvert \\
& +\lvert \langle (Q_l^bT-Q_l^0T)(\bm{u}-\bm{Q}_l^b\bm{u})\cdot \bm{n},s_{h0}-s_{hb}\rangle_{\partial\mathcal{T}_h}\rvert + \lvert \langle Q_l^0T(\bm{u}-\bm{Q}_l^b\bm{u})\cdot \bm{n},s_{h0}-s_{hb}\rangle_{\partial\mathcal{T}_h}\rvert \\
\leq& \sum_{K\in\mathcal{T}_h}(\lvert T-Q_l^bT\rvert_{0,2,\partial K}\lvert \bm{u}-\bm{Q}_k^0\bm{u}\rvert_{0,2,\partial K}\lvert s_{h0}-s_{hb}\rvert_{0,\infty,\partial K} +\lvert T-Q_l^bT\rvert_{0,2,\partial K}\lvert \bm{Q}_k^0\bm{u}\rvert_{0,\infty,\partial K}\lvert s_{h0}-s_{hb}\rvert_{0,2,\partial K}) \\
& +\sum_{K\in\mathcal{T}_h}(\lvert Q_l^bT-Q_l^0T\rvert_{0,2,\partial K}\lvert \bm{u}-\bm{Q}_l^b\bm{u}\rvert_{0,2,\partial K}\lvert s_{h0}-s_{hb}\rvert_{0,\infty,\partial K} +\lvert Q_l^0T\rvert_{0,\infty,\partial K}\lvert \bm{u}-\bm{Q}_l^b\bm{u}\rvert_{0,2,\partial K}\lvert s_{h0}-s_{hb}\rvert_{0,2,\partial K}) \\
\apprle& h^{l+1/2}\lvert T\rvert_{l+1}h^{1/2}\lvert \bm{u}\rvert_1h^{1-d/2}\interleave s_h\interleave + h^{l+1/2}\lvert T\rvert_{l+1}\lvert \bm{u}\rvert_{0,\infty}h^{1/2}\interleave s_h\interleave \\
&+h^{l+1/2}\lvert T\rvert_{l+1}h^{1/2}\lvert \bm{u}\rvert_1h^{1-d/2}\interleave s_h\interleave + \lvert T\rvert_{0,\infty}h^{l+1/2}\lvert \bm{u}\rvert_{l+1}h^{1/2}\interleave s_h\interleave \\
\apprle& h^{k}\lVert \bm{u}\rVert_{2}\lVert T\rVert_{k+1}\interleave s_h\interleave + h^{k}\lVert T\rVert_{2}\lVert \bm{u}\rVert_{k+1}\interleave s_h\interleave,
\end{align*}
\begin{align*}
&\lvert \langle s_{hb}\bm{Q}_l^b\bm{u}\cdot \bm{n},Q_k^0T\rangle_{\partial\mathcal{T}_h}\rvert= \lvert \langle s_{hb}\bm{Q}_l^b\bm{u}\cdot \bm{n},Q_k^0T-Q_k^bT\rangle_{\partial\mathcal{T}_h}\rvert \\
\leq& \lvert \langle (s_{h0}-s_{hb})(\bm{Q}_l^b\bm{u}-\bm{Q}_l^0\bm{u})\cdot \bm{n},Q_k^0T-Q_k^bT\rangle_{\partial\mathcal{T}_h}\rvert + \lvert \langle s_{h0}(\bm{Q}_l^b\bm{u}-\bm{Q}_l^0\bm{u})\cdot \bm{n},Q_k^0T-Q_k^bT\rangle_{\partial\mathcal{T}_h}\rvert \\
&+\lvert \langle (s_{h0}-s_{hb})\bm{Q}_l^0\bm{u}\cdot \bm{n},Q_k^0T-Q_k^bT\rangle_{\partial\mathcal{T}_h}\rvert + \lvert \langle s_{h0}\bm{Q}_l^0\bm{u}\cdot \bm{n},Q_k^0T-Q_k^bT\rangle_{\partial\mathcal{T}_h}\rvert \\
\leq& \sum_{K\in\mathcal{T}_h}\lvert s_{h0}-s_{hb}\rvert_{0,\infty,\partial K}\lvert \bm{Q}_l^b\bm{u}-\bm{Q}_l^0\bm{u}\rvert_{0,2,\partial K}\lvert Q_k^0T-Q_k^bT\rvert_{0,2,\partial K} + \sum_{K\in\mathcal{T}_h}\lvert s_{h0}\rvert_{0,\infty,\partial K}\lvert \bm{Q}_l^b\bm{u}-\bm{Q}_l^0\bm{u}\rvert_{0,2,\partial K}\lvert Q_k^0T-Q_k^bT\rvert_{0,2,\partial K} \\
&+\sum_{K\in\mathcal{T}_h}\lvert s_{h0}-s_{hb}\rvert_{0,2,\partial K}\lvert \bm{Q}_l^0\bm{u}\rvert_{0,6,\partial K}\lvert Q_k^0T-Q_k^bT\rvert_{0,3,\partial K} + \sum_{K\in\mathcal{T}_h}\lvert s_{h0}\rvert_{0,3,\partial K}\lvert \bm{Q}_l^0\bm{u}\rvert_{0,6,\partial K}\lvert Q_k^0T-Q_k^bT\rvert_{0,2,\partial K} \\
\apprle& h^{1-d/2}\interleave s_h\interleave h^{1/2}\lvert \bm{u}\rvert_1h^{k+1/2}\lvert T\rvert_{k+1} + h^{-d/6}\lvert s_{h0}\rvert_{0,6}h^{1/2}\lvert \bm{u}\rvert_1h^{k+1/2}\lvert T\rvert_{k+1} \\
&+ h^{1/2}\interleave s_h\interleave h^{-1/6}\lvert \bm{u}\rvert_{0,6}h^{k+2/3-d/6}\lvert T\rvert_{k+1} + h^{-1/3}\lvert s_{h0}\rvert_{0,3}h^{-1/6}\lvert \bm{u}\rvert_{0,6}h^{k+1/2}\lvert T\rvert_{k+1}  \\
\apprle& h^k\lVert \bm{u}\rVert_{2}\lVert T\rVert_{k+1}\interleave s_h\interleave.
\end{align*}
As a result, the two desired results follow from  the definitions of $E_N(\bm{u};\bm{u},\bm{v}_h)$,
	$\overline{E}_N(\bm{u};T,s_h) $ given in Lemma \ref{E-N}.
\end{proof}

\begin{lem}\label{lem44}
	Let $(\bm{u},p,T)$ be the solution to the problem (\ref{pb1}), then it holds
    \begin{eqnarray}\label{Iu}
	A_h(\bm{I}_h\bm{u};\bm{I}_h\bm{u},\bm{v}_h)&+&b_h(\bm{v}_h,J_hp)-b_h(\bm{u}_h,q_h)-d_h(H_hT,\bm{v}_h)\nonumber \\
	 &=& (\bm{f},\bm{v}_{h0})+E_L(\bm{u},\bm{v}_h)+E_N(\bm{u};\bm{u},\bm{v}_h), \forall (\bm{v}_h,q_h)\in \bm{V}_h^0\times Q_h^0, 
\label{HT}\\
	\overline{A}_h(\bm{I}_h\bm{u};H_hT,s_h)&=&(g,s_{h0})+\overline{E}_L(T,s_h)+\overline{E}_N(\bm{u};T,s_h), \forall s_h\in S_h^0,\label{411}
    \end{eqnarray}
    where 
    \begin{align*}
    E_L(\bm{u},\bm{v}_h) &:= \Pr  \langle(\nabla\bm{u}-\bm{Q}_m^0\nabla\bm{u})\cdot \bm{n},\bm{v}_{h0}-\bm{v}_{hb}\rangle_{\partial \mathcal{T}_h^f}+\Pr  \langle\tau(\bm{P}_k^{RT}\bm{u}-\bm{u}),\bm{v}_{h0}-\bm{v}_{hb}\rangle_{\partial \mathcal{T}_h^f},\\
    \overline{E}_L(T,s_h) &:= \kappa\langle(\nabla T-\bm{Q}_m^0\nabla T)\cdot \bm{n},s_{h0}-s_{hb}\rangle_{\partial \mathcal{T}_h}+\kappa\langle\tau(Q_k^0 T-T),s_{h0}-s_{hb}\rangle_{\partial \mathcal{T}_h}.
    \end{align*}
    In addition, it holds
    \begin{equation}\label{local RT}
    \bm{P}_k^{RT}\bm{u}|_K\in [P_k(K)]^d,\quad \forall K\in \mathcal{T}_h^f.
    \end{equation} 
\end{lem}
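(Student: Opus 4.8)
The plan is to verify the two identities \eqref{HT} and \eqref{411} term by term, splitting each left-hand side into its diffusion, pressure, buoyancy and convection parts and then inserting the strong form \eqref{pb1}. The convection contributions require no new work: Lemma \ref{E-N} already rewrites $c_h(\bm I_h\bm u;\bm I_h\bm u,\bm v_h)$ and $\overline c_h(\bm I_h\bm u;H_hT,s_h)$ as $(\nabla\cdot(\bm u\otimes\bm u),\bm v_{h0})+E_N$ and $(\nabla\cdot(\bm u T),s_{h0})+\overline E_N$, so the effort concentrates on the linear terms.

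First I would establish the auxiliary claim \eqref{local RT}. Applying Lemma \ref{RT38} with $\bm v=\bm u$ and $j=k$ and using $\nabla\cdot\bm u=0$ gives $(\nabla\cdot\bm P_k^{RT}\bm u,q_h)_K=0$ for all $q_h\in P_k(K)$; since $\nabla\cdot\bm P_k^{RT}\bm u\in P_k(K)$, taking $q_h=\nabla\cdot\bm P_k^{RT}\bm u$ forces $\nabla\cdot\bm P_k^{RT}\bm u=0$ on $K$, whence Lemma \ref{RT3.6} yields $\bm P_k^{RT}\bm u|_K\in[P_k(K)]^d$. This makes $\bm P_k^{RT}\bm u$ an admissible test tensor in the weak-divergence definition and, together with $\nabla\cdot\bm P_k^{RT}\bm u=0$ and the boundary condition $\bm u\cdot\bm n=0$ on $\partial\Omega_f$, shows at once that the divergence term $b_h(\bm I_h\bm u,q_h)=(\nabla_{w,k}q_h,\bm P_k^{RT}\bm u)$ vanishes, so it drops out of \eqref{HT}.

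For the diffusion terms I would invoke the commutativity relations \eqref{com1} and \eqref{com3} to replace $\nabla_{w,m}\bm I_h\bm u$ and $\nabla_{w,m}H_hT$ by $\bm Q_m^0\nabla\bm u$ and $\bm Q_m^0\nabla T$, and then prove the local identity
\[(\nabla_{w,m}\bm v_h,\bm Q_m^0\bm\varphi)_K=(\nabla\bm v_{h0},\bm\varphi)_K-\langle\bm v_{h0}-\bm v_{hb},(\bm Q_m^0\bm\varphi)\cdot\bm n\rangle_{\partial K}\]
for smooth $\bm\varphi$, which follows from the definition of $\nabla_{w,m}$ and one elementwise integration by parts (the volume term carries $\bm\varphi$ rather than $\bm Q_m^0\bm\varphi$ because $m\ge k-1$ makes $\nabla\bm v_{h0}$ of componentwise degree $\le m$). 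Choosing $\bm\varphi=\nabla\bm u$ (resp.\ $\nabla T$) and integrating by parts once more, $(\nabla\bm v_{h0},\nabla\bm u)_K=-(\bm v_{h0},\Delta\bm u)_K+\langle\bm v_{h0},\nabla\bm u\cdot\bm n\rangle_{\partial K}$. Summing over the mesh and using continuity of the exact normal flux $\nabla\bm u\cdot\bm n$ together with single-valuedness of $\bm v_{hb}$ (which vanishes on $\partial\Omega_f$), the term $\langle\nabla\bm u\cdot\bm n,\bm v_{h0}\rangle_{\partial K}$ becomes $\langle\nabla\bm u\cdot\bm n,\bm v_{h0}-\bm v_{hb}\rangle$; subtracting the $\bm Q_m^0\nabla\bm u$ boundary contribution reproduces the first half of $E_L$, while the stabilization term equals its second half after shifting $\bm Q_l^b$ with the projection property. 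This gives $a_h(\bm I_h\bm u,\bm v_h)=-\Pr(\bm v_{h0},\Delta\bm u)_{\mathcal T_h^f}+E_L(\bm u,\bm v_h)$, and identically $\overline a_h(H_hT,s_h)=-\kappa(s_{h0},\Delta T)_{\mathcal T_h}+\overline E_L(T,s_h)$.

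Finally I would dispose of the two remaining linear terms by projection arguments: $b_h(\bm v_h,J_hp)=(\nabla_{w,k}J_hp,\bm v_{h0})=(\bm Q_k^0\nabla p,\bm v_{h0})=(\nabla p,\bm v_{h0})$ using \eqref{com2} and $\bm v_{h0}\in[P_k(K)]^d$, and $d_h(H_hT,\bm v_h)=\Pr Ra(\bm jQ_k^0T,\bm v_{h0})=\Pr Ra(\bm jT,\bm v_{h0})$ since $\bm j\cdot\bm v_{h0}\in P_k(K)$ lets $Q_k^0$ be removed. Adding all contributions and substituting the strong momentum and energy equations of \eqref{pb1} tested against $\bm v_{h0}$ and $s_{h0}$ — which collapse $-\Pr(\Delta\bm u,\bm v_{h0})+(\nabla\cdot(\bm u\otimes\bm u),\bm v_{h0})+(\nabla p,\bm v_{h0})-\Pr Ra(\bm jT,\bm v_{h0})$ into $(\bm f,\bm v_{h0})$, and $-\kappa(\Delta T,s_{h0})+(\nabla\cdot(\bm u T),s_{h0})$ into $(g,s_{h0})$ — yields \eqref{HT} and \eqref{411}. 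I expect the main obstacle to be the diffusion step: proving the local weak-gradient identity and, above all, converting the elementwise flux terms $\langle\nabla\bm u\cdot\bm n,\bm v_{h0}\rangle_{\partial K}$ into interface quantities paired with $\bm v_{h0}-\bm v_{hb}$ while keeping the $\bm Q_m^0$ and $\bm Q_l^b$ projections aligned so that exactly $E_L$ and $\overline E_L$ (with no leftover terms) survive.
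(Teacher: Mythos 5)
Your proposal is correct and follows essentially the same route as the paper: the same decomposition into diffusion, stabilization, convection, pressure, buoyancy and divergence parts, the same use of Lemma \ref{RT38} and Lemma \ref{RT3.6} for \eqref{local RT}, of Lemma \ref{E-N} for the convective terms, of the commutativity relations \eqref{com1}--\eqref{com3}, and of Green's formula plus single-valuedness of the numerical traces to produce $E_L$ and $\overline E_L$. The only cosmetic difference is that the paper's computation leaves the stabilization residual paired with $\bm Q_l^b\bm v_{h0}-\bm v_{hb}$ rather than $\bm v_{h0}-\bm v_{hb}$, a discrepancy internal to the paper's own statement and immaterial to the subsequent estimates.
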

\begin{proof} We first show \eqref{local RT}. In fact,
	for all $K\in \mathcal{T}_h,\varphi \in P_k(K)$, by  Lemma \ref{RT38}  we get 
\begin{equation*}
	(\nabla\cdot \bm{P}_k^{RT}\bm{u},\varphi)_K= (\nabla\cdot \bm{u},\varphi)_K=0,
	\end{equation*}
	which indicates 
	\begin{equation}\label{div415}
	\nabla\cdot \bm{P}_k^{RT}\bm{u}=0.
	\end{equation}

	 Thus, the result (\ref{local RT}) follows from   Lemma \ref{RT3.6}.
	
	By the definition of $a_0(\cdot,\cdot),A(\cdot,\cdot)$ and $d_h(\cdot,\cdot)$, we obtain
	\begin{equation}\label{R}
	\begin{aligned}
	&A_h(\bm{I}_h\bm{u};\bm{I}_h\bm{u},\bm{v}_h)+b_h(\bm{v}_h,J_hp)-b_h(\bm{u}_h,q_h)-d_h(H_hT,\bm{v}_h)\\
	=&\Pr (\nabla_{w,m}\bm{I}_h\bm{u},\nabla_{w,m}\bm{v}_h)\\
	&+\Pr \langle \tau\bm{Q}_l^b(\bm{P}_k^{RT}\bm{u}-\bm{u}),\bm{Q}_l^b\bm{v}_{h0}-\bm{v}_{hb}\rangle_{\partial \mathcal{T}_h^f}\\
	&+\frac{1}{2}(\nabla_{w,k}\cdot (\bm{I}_h\bm{u}\otimes \bm{I}_h\bm{u}),\bm{v}_{h0})-\frac{1}{2}(\nabla_{w,k}\cdot (\bm{v}_h\otimes \bm{I}_h\bm{u}),\bm{P}_k^{RT}\bm{u})\\
	&+(\nabla_{w,k}\{Q_{k-1}^0p,Q_k^bp\},\bm{v}_{h0})\\
	&-(\nabla_{w,k}q_h,\bm{P}_k^{RT}\bm{u})\\
	&-\Pr  Ra(\bm{j}Q_k^0T,\bm{v}_{h0})\\
	:=&\sum_{i=1}^{6}R_i.
	\end{aligned}
	\end{equation}
	From the commutativity property (\ref{com1}), the definition of weak gradient,  Green's formula, the property of the projection $\bm{Q}_l^b$, and the relation  $\langle \nabla\bm{u}\cdot \bm{n},\bm{v}_{hb}\rangle_{\partial \mathcal{T}_h}=0$, it follows
	\begin{equation}\label{R1}
	\begin{aligned}
	R_1&=\Pr (\bm{Q}_m^0\nabla\bm{u},\nabla_{w,k}\bm{v}_h)\\
	&=-\Pr (\nabla_h\cdot \bm{Q}_m^0\nabla\bm{u},\bm{v}_{h0})+\Pr\langle \bm{Q}_m^0\nabla\bm{u}\cdot \bm{n},\bm{v}_{hb}\rangle_{\partial \mathcal{T}_h^f}\\
	&=\Pr (\bm{Q}_m^0\nabla\bm{u},\nabla_h\bm{v}_{h0})+\Pr\langle\bm{Q}_m^0\nabla\bm{u}\cdot \bm{n},\bm{v}_{hb}-\bm{v}_{h0}\rangle_{\partial \mathcal{T}_h^f}\\
	&=-\Pr (\Delta\bm{u},\bm{v}_{h0})+\Pr\langle (\nabla\bm{u}-\bm{Q}_m^0\nabla\bm{u})\cdot \bm{n},\bm{v}_{hb}-\bm{v}_{h0}\rangle_{\partial \mathcal{T}_h^f}.
	\end{aligned}
	\end{equation}
	By the definitions of the projections $\bm{Q}_l^b$ and $Q_k^0$, we have
	\begin{equation}\label{R2}
	R_2=\Pr \langle\tau(\bm{P}_k^{RT}\bm{u}-\bm{u}),\bm{Q}_l^b\bm{v}_{h0}-\bm{v}_{hb}\rangle_{\partial \mathcal{T}_h^f},\\
		\end{equation}
	\begin{equation}\label{R6}
	R_6 = \Pr  Ra(\bm{j}Q_k^0T,\bm{v}_{h0}) = \Pr  Ra(\bm{j}T,\bm{v}_{h0}).
	\end{equation}
By (\ref{c_h}), we get
	\begin{align}
	R_3 =& (\nabla\cdot (\bm{u}\otimes\bm{u}),\bm{v}_{h0})+E_N(\bm{u};\bm{u},\bm{v}_h).
	\end{align}
	The commutativity property (\ref{com2}) gives
	\begin{equation}\label{R4}
	R_4=(\bm{Q}_k^0\nabla p,\bm{v}_{h0})=(\nabla p,\bm{v}_{h0}).
	\end{equation}
	In view of  \eqref{div415}, (\ref{RT1}), and the definitions of $d_h(\cdot,\cdot)$ and weak gradient, we obtain
	\begin{equation}\label{R5}
	\begin{aligned}
	R_5&=-(\bm{P}_k^{RT}\bm{u},\nabla_{w,k}q_h)\\
	&=(\nabla\cdot \bm{P}_k^{RT}\bm{u},q_{h0})-\langle \bm{P}_k^{RT}\bm{u}\cdot \bm{n},q_{hb}\rangle_{\partial \mathcal{T}_h^f}\\
	&=-\langle \bm{u}\cdot \bm{n},q_{hb}\rangle_{\partial \mathcal{T}_h^f}\\
	&=0.
	\end{aligned}
	\end{equation}
	Finally, the desired relation \eqref{HT} follows from the combination of  (\ref{R})-(\ref{R5}) and the first equation of \eqref{pb1}.
	
Similarly, we can get  the relation (\ref{411}). This completes the proof.
\end{proof}
\begin{lem}\label{lem4.5}
	For $\bm{u}\in [H^{k+1}(\Omega_f)]^d$ and $T\in H^{k+1}(\Omega)$,   it holds
	\begin{align}
	\lvert E_L(\bm{u},\bm{v}_h)\rvert &\apprle \Pr  h^k\lvert \bm{u}\rvert_{k+1} \interleave \bm{v}_h \interleave,\quad \forall \bm{v}_h\in \bm{V}_h^0,\label{ELerroru}\\
	\lvert \overline{E}_L(T,s_h)\rvert &\apprle \kappa  h^k\lvert T\rvert_{k+1} \interleave s_h \interleave,\quad \forall s_h\in S_h^0.\label{ELerrorT}
	\end{align}
\end{lem}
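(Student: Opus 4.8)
The plan is to treat both estimates in parallel, since $E_L(\bm u,\bm v_h)$ and $\overline E_L(T,s_h)$ each consist of two boundary integrals of the same type; I will carry out the argument in full for $E_L$ and indicate the obvious substitutions ($\bm u\mapsto T$, $\bm P_k^{RT}\bm u\mapsto Q_k^0T$, $\Pr\mapsto\kappa$, $\bm v_h\mapsto s_h$) that yield the bound for $\overline E_L$. In both integrands the ``discrete factor'' is $\bm v_{h0}-\bm v_{hb}$ (resp.\ $s_{h0}-s_{hb}$), so the first step is a face-by-face Cauchy--Schwarz inequality, inserting the weight $\tau^{1/2}=h_K^{-1/2}$ so that each term splits into a data-approximation factor scaled by $h_K^{1/2}$ (for the consistency term) or $h_K^{-1/2}$ (for the stabilization term) and the factor $\lVert\tau^{1/2}(\bm v_{h0}-\bm v_{hb})\rVert_{0,\partial\mathcal T_h^f}$.

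The crucial auxiliary estimate, which I would isolate first, is
\begin{equation*}
\lVert\tau^{1/2}(\bm v_{h0}-\bm v_{hb})\rVert_{0,\partial\mathcal T_h^f}\apprle \interleave\bm v_h\interleave,\qquad \lVert\tau^{1/2}(s_{h0}-s_{hb})\rVert_{0,\partial\mathcal T_h}\apprle\interleave s_h\interleave .
\end{equation*}
To obtain it I would write $\bm v_{h0}-\bm v_{hb}=(\bm v_{h0}-\bm Q_l^b\bm v_{h0})+(\bm Q_l^b\bm v_{h0}-\bm v_{hb})$; the second summand is controlled directly by $\interleave\bm v_h\interleave$ through the definition of the triple-bar norm. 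For the first summand, which is nonzero only when $l=k-1$, I would exploit that $\bm Q_l^b$ preserves constants, subtract the elementwise mean $\bar{\bm v}_{h0}$, and apply the trace inequality of Lemma~\ref{trace inequality} together with a Poincar\'e inequality to get $h_K^{-1/2}\lVert\bm v_{h0}-\bm Q_l^b\bm v_{h0}\rVert_{0,\partial K}\apprle\lVert\nabla\bm v_{h0}\rVert_{0,K}$; summing over $K$ and invoking \eqref{1seminorm v} finishes it. This is essentially the bound already used in \eqref{319}, and by the remark following Lemma~\ref{discrete sobolev embedding} it applies verbatim to $s_h$.

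With the discrete factor under control, the two data-approximation factors are routine. For the consistency term I would use the boundary projection estimate of Lemma~\ref{ineq} applied to $\nabla\bm u$ with $j=m$ and Sobolev index $s=k$ (admissible since $1\le k\le m+1$), giving $h_K^{1/2}\lVert\nabla\bm u-\bm Q_m^0\nabla\bm u\rVert_{0,\partial K}\apprle h_K^{k}\lvert\bm u\rvert_{k+1,K}$. For the stabilization term I would use the RT trace estimate \eqref{RT02edge} with $r=k+1$, giving $h_K^{-1/2}\lVert\bm P_k^{RT}\bm u-\bm u\rVert_{0,\partial K}\apprle h_K^{k}\lvert\bm u\rvert_{k+1,K}$. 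A final Cauchy--Schwarz over the elements, the bound $h\ge h_K$, and the auxiliary estimate then yield $\lvert E_L(\bm u,\bm v_h)\rvert\apprle\Pr h^k\lvert\bm u\rvert_{k+1}\interleave\bm v_h\interleave$. For $\overline E_L$ the only change is that the stabilization term now involves $Q_k^0T-T$, whose boundary error $h_K^{-1/2}\lVert Q_k^0T-T\rVert_{0,\partial K}\apprle h_K^k\lvert T\rvert_{k+1,K}$ again comes from Lemma~\ref{ineq} (with $j=k$, $s=k+1$).

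I expect the only genuinely delicate point to be the auxiliary estimate controlling $\bm v_{h0}-\bm v_{hb}$ rather than $\bm Q_l^b\bm v_{h0}-\bm v_{hb}$: the triple-bar norm measures only the latter, so in the low-order trace case $l=k-1$ one must pay for the extra projection error $\bm v_{h0}-\bm Q_l^b\bm v_{h0}$ on $\partial K$ and absorb it into $\lVert\nabla\bm v_{h0}\rVert_{0,K}$ via the inverse/trace and Poincar\'e inequalities. Everything else is bookkeeping of $h_K$-powers, the whole point of which is to choose the Sobolev indices ($s=k$ for the gradient projection, $r=k+1$ for the RT and interior projections) so that each contribution scales exactly like $h^k$.
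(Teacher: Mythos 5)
Your proposal is correct and follows essentially the same route as the paper's proof: split $E_L$ into its consistency and stabilization parts, apply a weighted Cauchy--Schwarz face by face, bound the data factors by Lemma~\ref{ineq} (applied to $\nabla\bm u$ with $s=k$) and the RT boundary estimate \eqref{RT02edge} with $r=k+1$, and control the discrete factor by writing $\bm v_{h0}-\bm v_{hb}=(\bm v_{h0}-\bm Q_l^b\bm v_{h0})+(\bm Q_l^b\bm v_{h0}-\bm v_{hb})$ and invoking the trace/Poincar\'e argument together with \eqref{1seminorm v}. The only difference is presentational: you isolate $\lVert\tau^{1/2}(\bm v_{h0}-\bm v_{hb})\rVert_{0,\partial\mathcal T_h^f}\apprle\interleave\bm v_h\interleave$ as a named auxiliary estimate, whereas the paper performs the same splitting inline.
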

\begin{proof}
	 By Lemma \ref{ineq} and the definition of $\bm{Q}_m^0$, we have
\begin{equation*}
	\begin{aligned}
	\lvert E_L(\bm{u},\bm{v}_h)\rvert&=\lvert \Pr\langle(\nabla\bm{u}-\bm{Q}_m^0\nabla\bm{u})\cdot \bm{n},\bm{v}_{h0}-\bm{v}_{hb}\rangle_{\partial \mathcal{T}_h^f}\rvert+\lvert\Pr\langle\tau(\bm{P}_k^{RT}\bm{u}-\bm{u}),\bm{Q}_l^b\bm{v}_{h0}-\bm{v}_{hb}\rangle_{\partial \mathcal{T}_h^f}\rvert\\
	&\leq \sum_{K\in\mathcal{T}_h^f}\Pr\lVert \nabla\bm{u}-\bm{Q}_m^0\nabla\bm{u}\rVert_{0,\partial K}(\lVert \bm{v}_{h0}-\bm{Q}_l^b\bm{v}_{h0}\rVert_{0,\partial K} +\lVert \bm{Q}_l^b\bm{v}_{h0}-\bm{v}_{hb}\rVert_{0,\partial K})\\
	&\quad  +\sum_{K\in\mathcal{T}_h^f}\Pr\lVert \tau^{1/2}(\bm{P}_k^{RT}\bm{u}-\bm{u})\rVert_{0,\partial K}\lVert \tau^{1/2}(\bm{Q}_l^b\bm{v}_{h0}-\bm{v}_{hb})\rVert_{0,\partial K}\\
	&\apprle \Pr h^k\lvert \bm{u}\rvert_{k+1}(\lVert \nabla_h\bm{v}_{h0}\rVert_0 +\lVert \tau^{1/2}(\bm{v}_{h0}-\bm{v}_{hb})\rVert_{0,\partial\mathcal{T}_h^f}) +\Pr h^k\lvert \bm{u}\rvert_{k+1}\lVert\lvert \bm{v}_h\rvert\rVert \\
	&\apprle \Pr h^k\lvert \bm{u}\rvert_{k+1}\lVert\lvert \bm{v}_h\rvert\rVert.
	\end{aligned}
	\end{equation*}	
i.e. \eqref{ELerroru} holds. 	The estimate (\ref{ELerrorT}) follows similarly. 
\end{proof}

\begin{thm}\label{error1}
	Let $(\bm{u},p,T)\in [H^{k+1}(\Omega_f)]^d\times H^k(\Omega_f)\times H^{k+1}(\Omega)$ and $(\bm{u}_h,p_h,T_h)\in \bm{V}_h^0\times Q_h^0\times S_h^0$ be the solutions to the problem (\ref{pb1}) and the WG scheme (\ref{pb2}), respectively. Then, under the assumption \eqref{assu} with 
	 \begin{equation}\label{c-0}
	 C_0:=1 -({\Pr}^{-1}\mathcal{N}_hRa\kappa^{-1}+\mathcal{M}_h Ra\kappa^{-2})\lVert g\rVert_h + \mathcal{N}_h{\Pr}^{-2}\lVert \bm{f}\rVert_h> 0 ,
	 \end{equation}
	it holds the following estimates: for $l=k$ when $d = 2,3$, and for $l = k-1$ when $d=2$,
	\begin{align}
	&\interleave \bm{I}_h\bm{u}-\bm{u}_h\interleave \apprle C_1h^k\lVert \bm{u}\rVert_{k+1},\label{erroru1}\\
    &\interleave H_h T-T_h\interleave \apprle (C_1\mathcal{M}_h\kappa^{-2}\lVert g\rVert_h +\kappa^{-1}\lVert T\rVert_2)h^k\lVert \bm{u}\rVert_{k+1}+(1 +\kappa^{-1}\lVert \bm{u}\rVert_2)h^k\lVert T\rVert_{k+1},\label{errorT1}\\
    &\lVert J_hp-p_h\rVert \apprle \Pr (C_1+ Ra\kappa^{-1}\lVert T\rVert_2)h^k\lVert \bm{u}\rVert_{k+1}+\Pr Ra(1 +\kappa^{-1}\lVert \bm{u}\rVert_2)h^k\lVert T\rVert_{k+1},\label{errorp}
    \end{align}
    where $C_1 := \frac{1+{\Pr}^{-1}\lVert \bm{u}\rVert_2}{C_0+\mathcal{M}_hRa\kappa^{-2}\lVert g\rVert_h}$.
\end{thm}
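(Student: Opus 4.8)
The plan is to estimate the three projection errors $\bm{e}_{\bm{u}}:=\bm{I}_h\bm{u}-\bm{u}_h$, $e_T:=H_hT-T_h$ and $e_p:=J_hp-p_h$ in turn, using the consistency identities of Lemma \ref{lem44} in place of Galerkin orthogonality. First I would subtract the WG scheme \eqref{pb2} from the perturbed identities \eqref{HT} and \eqref{411}. Since \eqref{local RT} together with \eqref{R5} gives $b_h(\bm{I}_h\bm{u},q_h)=0$ and $\bm{u}_h\in\bm{W}_h$, the velocity error $\bm{e}_{\bm{u}}$ lies in the discretely divergence-free space $\bm{W}_h$; this is exactly what will let me test with $\bm{v}_h=\bm{e}_{\bm{u}}$ and annihilate the pressure contribution $b_h(\bm{e}_{\bm{u}},e_p)$.

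For the velocity I would write the nonlinear difference as $c_h(\bm{I}_h\bm{u};\bm{I}_h\bm{u},\cdot)-c_h(\bm{u}_h;\bm{u}_h,\cdot)=c_h(\bm{e}_{\bm{u}};\bm{u}_h,\cdot)+c_h(\bm{I}_h\bm{u};\bm{e}_{\bm{u}},\cdot)$, test with $\bm{e}_{\bm{u}}$, and use \eqref{3356} to kill $c_h(\bm{I}_h\bm{u};\bm{e}_{\bm{u}},\bm{e}_{\bm{u}})$ together with coercivity \eqref{coercivity ah} on the viscous term. Bounding the surviving trilinear term by $\mathcal{N}_h\interleave\bm{u}_h\interleave\interleave\bm{e}_{\bm{u}}\interleave^2$ via \eqref{Nh}, inserting the stability bound \eqref{stabs} for $\interleave\bm{u}_h\interleave$, and controlling the consistency residuals $E_L,E_N$ through Lemmas \ref{lem4.5} and \ref{lem4.3}, I arrive at an inequality whose left-hand coefficient is $\Pr^{-1}(\Pr-\mathcal{N}_h\interleave\bm{u}_h\interleave)$; a direct computation shows this is bounded below by $C_0+\mathcal{M}_hRa\kappa^{-2}\lVert g\rVert_h$, which is positive under \eqref{c-0}. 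This produces a velocity inequality of the form $(C_0+\mathcal{M}_hRa\kappa^{-2}\lVert g\rVert_h)\interleave\bm{e}_{\bm{u}}\interleave\apprle Ra\interleave e_T\interleave+(1+\Pr^{-1}\lVert\bm{u}\rVert_2)h^k\lVert\bm{u}\rVert_{k+1}$, from which \eqref{erroru1} follows.

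In parallel, for the temperature I would split $\overline{c}_h(\bm{I}_h\bm{u};H_hT,\cdot)-\overline{c}_h(\bm{u}_h;T_h,\cdot)=\overline{c}_h(\bm{e}_{\bm{u}};H_hT,\cdot)+\overline{c}_h(\bm{u}_h;e_T,\cdot)$, test with $e_T$, apply \eqref{3356} and coercivity \eqref{coercivity ahbar}, bound $\overline{c}_h(\bm{e}_{\bm{u}};H_hT,e_T)$ by $\mathcal{M}_h\interleave H_hT\interleave\interleave\bm{e}_{\bm{u}}\interleave\interleave e_T\interleave$ using \eqref{Mh} with the a priori estimate $\interleave H_hT\interleave\apprle\kappa^{-1}\lVert g\rVert_h$ (from the commutativity \eqref{com3} and the stability of the exact temperature), and control $\overline{E}_L,\overline{E}_N$ by Lemmas \ref{lem4.5} and \ref{lem4.3}. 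This gives $\interleave e_T\interleave\apprle\mathcal{M}_h\kappa^{-2}\lVert g\rVert_h\interleave\bm{e}_{\bm{u}}\interleave+R_T$, with $R_T$ collecting the temperature consistency terms; inserting \eqref{erroru1} for $\interleave\bm{e}_{\bm{u}}\interleave$ then yields \eqref{errorT1}. Finally, \eqref{errorp} follows from the inf-sup condition of Lemma \ref{infsup} applied to the full momentum error equation tested over all $\bm{v}_h\in\bm{V}_h^0$: one bounds $b_h(\bm{v}_h,e_p)$ by the viscous, convective and buoyancy residuals, which are already controlled in terms of $\interleave\bm{e}_{\bm{u}}\interleave$, $\interleave e_T\interleave$, $\interleave\bm{u}_h\interleave$ and $\interleave\bm{I}_h\bm{u}\interleave$.

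The main obstacle is the two-way nonlinear coupling: the momentum residual carries $d_h(e_T,\cdot)$ while the energy residual carries $\overline{c}_h(\bm{e}_{\bm{u}};\cdot,\cdot)$, so neither error can be estimated in isolation and one is forced to treat $(\interleave\bm{e}_{\bm{u}}\interleave,\interleave e_T\interleave)$ as a coupled pair. The crux is that the small-data hypothesis \eqref{assu}, equivalently $C_0>0$, is precisely the condition that keeps the coupling a strict contraction, so the cross terms are absorbed rather than amplified; this is the discrete analogue of the mechanism behind the uniqueness results of Theorems \ref{continuous exsit} and \ref{thm32}. A secondary technical point is verifying the $h^k$ consistency rate \emph{uniformly} for both $l=k$ and $l=k-1$, which is exactly where the dimension-dependent restriction (allowing $l=k-1$ only for $d=2$) enters, through the trace and inverse-inequality exponents appearing in Lemmas \ref{lem4.3} and \ref{RTgeneral}.
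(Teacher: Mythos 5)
Your proposal follows the same architecture as the paper's proof: subtract the scheme from the consistency identities of Lemma \ref{lem44} to get the error equations, test with $(\bm{e}_{\bm u},e_p,e_T)$ so that $b_h(\bm{e}_{\bm u},e_p)$ cancels (your observation that $\bm{e}_{\bm u}\in\bm W_h$ is exactly why this works), use \eqref{3356} and coercivity to reduce the convective differences to a single term bounded by $\mathcal N_h\interleave\bm u_h\interleave\interleave\bm{e}_{\bm u}\interleave^2$ (resp.\ $\mathcal M_h$), absorb it via the stability bound \eqref{stabs} and the smallness condition \eqref{c-0}, control $E_L,E_N,\overline E_L,\overline E_N$ by Lemmas \ref{lem4.3} and \ref{lem4.5}, and finish the pressure estimate with the inf-sup condition of Lemma \ref{infsup}. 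Up to the minor point that the paper bounds the temperature convective difference as $\overline c_h(\bm{e}_{\bm u};T_h,e_T)$ and invokes $\interleave T_h\interleave\le\kappa^{-1}\lVert g\rVert_h$ from \eqref{regular estimates Th}, whereas you use $\overline c_h(\bm{e}_{\bm u};H_hT,\cdot)$ and need a separate bound $\interleave H_hT\interleave\apprle\kappa^{-1}\lVert g\rVert_h$, the two arguments coincide.

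The one step that does not close as you state it is the passage from your coupled velocity inequality to \eqref{erroru1}. You correctly retain the buoyancy term and arrive at
$(C_0+\mathcal M_hRa\kappa^{-2}\lVert g\rVert_h)\interleave\bm{e}_{\bm u}\interleave\apprle Ra\interleave e_T\interleave+(1+{\Pr}^{-1}\lVert\bm u\rVert_2)h^k\lVert\bm u\rVert_{k+1}$. Substituting your own temperature inequality $\interleave e_T\interleave\apprle\mathcal M_h\kappa^{-2}\lVert g\rVert_h\interleave\bm{e}_{\bm u}\interleave+R_T$ then cancels $\mathcal M_hRa\kappa^{-2}\lVert g\rVert_h$ from the left and leaves
$C_0\interleave\bm{e}_{\bm u}\interleave\apprle(1+{\Pr}^{-1}\lVert\bm u\rVert_2+Ra\kappa^{-1}\lVert T\rVert_2)h^k\lVert\bm u\rVert_{k+1}+Ra(1+\kappa^{-1}\lVert\bm u\rVert_2)h^k\lVert T\rVert_{k+1}$,
which is optimal in $h$ but is \emph{not} \eqref{erroru1}: the denominator degrades from $C_0+\mathcal M_hRa\kappa^{-2}\lVert g\rVert_h$ to $C_0$ and the bound acquires $Ra$-weighted $\lVert T\rVert_{k+1}$ and $\lVert T\rVert_2$ contributions absent from $C_1$. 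The paper obtains \eqref{erroru1} in its stated form only because its tested identity \eqref{431} silently omits the term $d_h(H_hT-T_h,\bm I_h\bm u-\bm u_h)$ that is plainly present in the error equation \eqref{430}; with that term dropped, the velocity estimate decouples and the clean constant $C_1$ emerges. So you should either justify discarding that coupling term (which the paper does not) or accept the modified constants above; "from which \eqref{erroru1} follows" is the gap.
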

\begin{proof}
	From (\ref{pb2}) and Lemma \ref{lem44} we   easily get the following error equations:
	\begin{equation}\label{430}
	\begin{aligned}
	&a_h(\bm{I}_h\bm{u}-\bm{u}_h,\bm{v}_h)+c_h(\bm{I}_h\bm{u};\bm{I}_h\bm{u},\bm{v}_h)-c_h(\bm{u}_h;\bm{u}_h,\bm{v}_h)+b_h(\bm{v}_h,J_hp-p_h) \\
	&\quad -b_h(\bm{I}_h\bm{u}-\bm{u}_h,q_h)
	-d_h(H_hT-T_h,\bm{v}_h)=E_L(\bm{u},\bm{v}_h)+E_N(\bm{u};\bm{u},\bm{v}_h),\forall (\bm{v}_h,q_h)\in \bm{V}_h^0\times Q_h^0,\\
	\end{aligned}
	\end{equation}
	\begin{equation}
	\overline{a}_h(H_hT-T_h,s_h)+\overline{c}_h(\bm{I}_h\bm{u};H_hT,s_h)-\overline{c}_h(\bm{u}_h;T_h,s_h)=\overline{E}_L(T,s_h)+\overline{E}_N(\bm{u};T,s_h), \forall s_h\in S_h^0. 
	\end{equation}
	Take   $(\bm{v}_h,q_h,s_h) = (\bm{I}_h\bm{u}-\bm{u}_h,J_hp-p_h,H_hT-T_h)$ in the above two equations, then we have
	\begin{equation}\label{431}
	\begin{aligned}
	\Pr \interleave \bm{I}_h\bm{u}-\bm{u}_h\interleave^2 &= E_L(\bm{u},\bm{I}_h\bm{u}-\bm{u}_h)+E_N(\bm{u};\bm{u},\bm{I}_h\bm{u}-\bm{u}_h)\\
	&-c_h(\bm{I}_h\bm{u};\bm{I}_h\bm{u},\bm{I}_h\bm{u}-\bm{u}_h)+c_h(\bm{u}_h;\bm{u}_h,\bm{I}_h\bm{u}-\bm{u}_h),
	\end{aligned}
	\end{equation}
	\begin{equation}
	\begin{aligned}
	\kappa \interleave H_h T-T_h\interleave^2 &= \overline{E}_L(T,H_h T-T_h)+\overline{E}_N(\bm{u};T,H_h T-T_h)\\
	&-\overline{c}_h(\bm{I}_h\bm{u};H_hT,H_h T-T_h)+\overline{c}_h(\bm{u}_h;T_h,H_h T-T_h)
	\end{aligned}
	\end{equation}
		By \eqref{3356} and (\ref{stabs}), it holds
	\begin{equation*}
	\begin{aligned}
	&c_h(\bm{I}_h\bm{u};\bm{I}_h\bm{u},\bm{I}_h\bm{u}-\bm{u}_h)-c_h(\bm{u}_h;\bm{u}_h,\bm{I}_h\bm{u}-\bm{u}_h) \\
	=& c_h(\bm{I}_h\bm{u}-\bm{u}_h;\bm{u}_h,\bm{I}_h\bm{u}-\bm{u}_h)\\
\leq& \mathcal{N}_h\interleave \bm{u}_h\interleave \interleave \bm{I}_h\bm{u}-\bm{u}_h\interleave^2\\
	\leq& \mathcal{N}_h(Ra\kappa^{-1}\lVert g\rVert_h+{\Pr}^{-1}\lVert \bm{f}\rVert_h)\interleave \bm{I}_h\bm{u}-\bm{u}_h\interleave^2,
	\end{aligned}
	\end{equation*}
which, together with \eqref{431}, \eqref{c-0}, Lemma \ref{lem4.3}, and Lemma \ref{lem4.5},  leads to
	\begin{equation*}
    \begin{aligned}
     (C_0+\mathcal{M}_h Ra\kappa^{-2}\lVert g\rVert_h)\interleave \bm{I}_h\bm{u}-\bm{u}_h\interleave 
     \apprle (1+{\Pr}^{-1}\lVert \bm{u}\rVert_2)h^k\lVert \bm{u}\rVert_{k+1},
    \end{aligned}
	\end{equation*}
i.e.   \eqref{erroru1} holds.

	Similarly, we can obtain
	\begin{align*}
	 \interleave H_h T-T_h\interleave &\leq \mathcal{M}_h\kappa^{-2}\lVert g\rVert_h\interleave \bm{I}_h\bm{u}-\bm{u}_h\interleave + \kappa^{-1}(\kappa +\lVert \bm{u}\rVert_2)h^k\lVert T\rVert_{k+1} + \kappa^{-1}h^k\lVert T\rVert_2\lVert \bm{u}\rVert_{k+1}\\
	 & \apprle (C_1\mathcal{M}_h\kappa^{-2}\lVert g\rVert_h +\kappa^{-1}\lVert T\rVert_2)h^k\lVert \bm{u}\rVert_{k+1}+(1 +\kappa^{-1}\lVert \bm{u}\rVert_2)h^k\lVert T\rVert_{k+1},
	\end{align*}
	i.e. \eqref{errorT1} holds.
	
	Finally, let us estimate $\lVert J_hp-p_h\rVert$. In light of Theorem \ref{infsup}, \eqref{430}, Lemma \ref{boundedness},  Lemma \ref{lem4.3},  Lemma \ref{lem4.5},  \eqref{erroru1}, and  \eqref{errorT1}, we have
	\begin{align*}
	&\lVert J_hp-p_h\rVert \apprle \sup\limits_{\bm{0}\neq\bm{v}_h\in \bm{V}_h^0} \frac{b_h(\bm{v}_h,J_hp-p_h)}{\interleave \bm{v}_h\interleave}\\
	=& \sup\limits_{\bm{0}\neq\bm{v}_h\in \bm{V}_h^0} \frac{E_L(\bm{u},\bm{v}_h)+E_N(\bm{u};\bm{u},\bm{v}_h)-a_h(\bm{I}_h\bm{u}-\bm{u}_h,\bm{v}_h)-c_h(\bm{I}_h\bm{u};\bm{I}_h\bm{u},\bm{v}_h)+c_h(\bm{u}_h;\bm{u}_h,\bm{v}_h)+d_h(H_hT-T_h,\bm{v}_h)}{\interleave \bm{v}_h\interleave}\\
	\apprle& (\Pr +\lVert \bm{u}\rVert_2)h^k\lVert \bm{u}\rVert_{k+1}+(\Pr + \mathcal{N}_h(Ra\kappa^{-1}\lVert g\rVert_h+{\Pr}^{-1}\lVert \bm{f}\rVert_h))\interleave \bm{I}_h\bm{u}-\bm{u}_h\interleave + \Pr  Ra\interleave H_h T-T_h\interleave\\
	\apprle& \Pr (C_1+ Ra\kappa^{-1}\lVert T\rVert_2)h^k\lVert \bm{u}\rVert_{k+1}+\Pr Ra(1 +\kappa^{-1}\lVert \bm{u}\rVert_2)h^k\lVert T\rVert_{k+1},
	\end{align*}
	i.e. \eqref{errorp} holds.
\end{proof}

From Theorem \ref{error1}, Lemma \ref{lem3.3}, and the triangle inequality, it follows the following error estimates:
\begin{thm}
	Under the same conditions of  Theorem \ref{error1}, it holds
	\begin{align*}
	&\lVert \nabla\bm{u}-\nabla_h\bm{u}_{h0}\rVert_0 \apprle (C_1+1)h^k\lVert \bm{u}\rVert_{k+1},\\
	&\lVert \nabla\bm{u}-\nabla_{w,m}\bm{u}_h\rVert_0 \apprle (C_1+1)h^k\lVert \bm{u}\rVert_{k+1},\\
	&\lVert \nabla T-\nabla_h T_{h0}\rVert_0 \apprle (\mathcal{M}_h\kappa^{-2}\lVert g\rVert_hC_1 +\kappa^{-1}\lVert T\rVert_2)h^k\lVert \bm{u}\rVert_{k+1}+(1 +\kappa^{-1}\lVert \bm{u}\rVert_2)h^k\lVert T\rVert_{k+1},\\
	&\lVert \nabla T-\nabla_{w,m}T_h\rVert_0 \apprle (\mathcal{M}_h\kappa^{-2}\lVert g\rVert_hC_1 +\kappa^{-1}\lVert T\rVert_2)h^k\lVert \bm{u}\rVert_{k+1}+(1 +\kappa^{-1}\lVert \bm{u}\rVert_2)h^k\lVert T\rVert_{k+1},\\
	&\lVert p-p_{h0}\rVert_0 \apprle \Pr (C_1+ Ra\kappa^{-1}\lVert T\rVert_2)h^k\lVert \bm{u}\rVert_{k+1}+\Pr Ra(1 +\kappa^{-1}\lVert \bm{u}\rVert_2)h^k\lVert T\rVert_{k+1} + h^k\lVert p\rVert_k.
	\end{align*}
\end{thm}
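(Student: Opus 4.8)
The plan is to estimate each of the five quantities by inserting the projections $\bm{I}_h\bm{u}$, $J_hp$, and $H_hT$ and applying the triangle inequality, so that every error splits into an \emph{approximation} part (controlled by the standard projection estimates of Lemma \ref{ineq} and Lemma \ref{RTgeneral}) and a \emph{discrete} part (controlled by the abstract estimates \eqref{erroru1}--\eqref{errorp} of Theorem \ref{error1}). Since all of the ingredients are already in place, the argument is essentially bookkeeping.

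First I would treat the velocity gradient in the broken seminorm. I write
$$\nabla\bm{u}-\nabla_h\bm{u}_{h0}=\nabla_h(\bm{u}-\bm{P}_k^{RT}\bm{u})+\nabla_h(\bm{P}_k^{RT}\bm{u}-\bm{u}_{h0}).$$
The first term is $\apprle h^k\lVert\bm{u}\rVert_{k+1}$ by the RT estimate \eqref{RT12} with $r=k+1$, $j=k$; because the interior component of $\bm{I}_h\bm{u}-\bm{u}_h$ is precisely $\bm{P}_k^{RT}\bm{u}-\bm{u}_{h0}$, the second term is bounded via \eqref{1seminorm v} and \eqref{erroru1} by $\apprle C_1h^k\lVert\bm{u}\rVert_{k+1}$, giving the first displayed estimate. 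For the weak gradient I instead use the commutativity \eqref{com1}, i.e.\ $\nabla_{w,m}\bm{I}_h\bm{u}=\bm{Q}_m^0(\nabla\bm{u})$, to write
$$\nabla\bm{u}-\nabla_{w,m}\bm{u}_h=(\nabla\bm{u}-\bm{Q}_m^0\nabla\bm{u})+\nabla_{w,m}(\bm{I}_h\bm{u}-\bm{u}_h),$$
where the first term is $\apprle h^k\lVert\bm{u}\rVert_{k+1}$ by Lemma \ref{ineq} and the second is $\le\interleave\bm{I}_h\bm{u}-\bm{u}_h\interleave\apprle C_1h^k\lVert\bm{u}\rVert_{k+1}$ directly from the definition of $\interleave\cdot\interleave$ together with \eqref{erroru1}.

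The temperature estimates follow the identical pattern through $H_hT=\{Q_k^0T,Q_l^bT\}$: I bound $\lVert\nabla_h(T-Q_k^0T)\rVert_0$ and $\lVert\nabla T-\bm{Q}_m^0\nabla T\rVert_0$ by $h^k\lVert T\rVert_{k+1}$ (using Lemma \ref{ineq} and the commutativity \eqref{com3}), and absorb the discrete part $\interleave H_hT-T_h\interleave$ using \eqref{errorT1}. For the pressure I split
$$p-p_{h0}=(p-Q_{k-1}^0p)+(Q_{k-1}^0p-p_{h0});$$
the first term contributes the extra $h^k\lVert p\rVert_k$ by Lemma \ref{ineq} with $j=k-1$, $s=k$, while the second satisfies $\lVert Q_{k-1}^0p-p_{h0}\rVert_0\le\lVert J_hp-p_h\rVert$ by the definition of the norm $\lVert\cdot\rVert$ on $Q_h^0$ and is then controlled by \eqref{errorp}.

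I expect no serious obstacle, as the whole proof is the triangle inequality plus previously established estimates. The one point that genuinely requires care is the order of the $L^2$-projection error $\lVert\nabla\bm{u}-\bm{Q}_m^0\nabla\bm{u}\rVert_0$ (and its temperature analogue) in the limiting case $m=k-1$: one must verify that $s=k$ is admissible in Lemma \ref{ineq} for the projection $\bm{Q}_m^0$, which holds since $k\le(k-1)+1=m+1$, so that this term still attains the full order $h^k\lVert\bm{u}\rVert_{k+1}$ rather than dropping to a lower order. Once this admissibility is checked, summing the split contributions over $\mathcal{T}_h^f$ and $\mathcal{T}_h$ yields all five estimates.
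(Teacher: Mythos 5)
Your proposal is correct and follows exactly the route the paper indicates (the paper states only that the theorem "follows from Theorem \ref{error1}, Lemma \ref{lem3.3}, and the triangle inequality" without writing out the details): you insert the interpolants $\bm{I}_h\bm{u}$, $H_hT$, $J_hp$, control the approximation parts by Lemmas \ref{ineq}, \ref{RTgeneral} and the commutativity relations, and control the discrete parts by \eqref{1seminorm v} together with \eqref{erroru1}--\eqref{errorp}. Your explicit check that $s=k$ is admissible for $\bm{Q}_m^0$ when $m=k-1$ is a worthwhile detail the paper leaves implicit.
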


\section{Local elimination property and iteration scheme}

\subsection{Local elimination}
In this subsection, we shall show that in the WG scheme (\ref{pb2}), the velocity,   pressure, and temperature approximations, $(\bm{u}_{h0}, p_{h0}, T_{h0})$, defined in the interior of the elements, can be locally eliminated by using the numerical traces, $(\bm{u}_{hb}, p_{hb}, T_{hb})$, defined on the interface of the elements. Therefore, after the local elimination the resultant   system only involves degrees of freedom of $(\bm{u}_{hb}, p_{hb}, T_{hb})$
as unknowns.

We rewrite the   scheme (\ref{pb2}) as the following form:   seek $\bm{u}_h = (\bm{u}_{h0},\bm{u}_{hb})\in \bm{V}_h^0$, $p_h = (p_{h0},p_{hb})\in Q_h^0$ and $T_h = (T_{h0},T_{hb})\in S_h^0$ such that
\begin{align}
\label{pb6}
\left \{
\begin{array}{rl}
A_h(\bm{u}_h;\bm{u}_h,\bm{v}_h)+b_h(\bm{v}_h,p_h)-b_h(\bm{u}_h,q_h)-d_h(T_h,\bm{v}_h)&= (\bm{f},\bm{v}_{h0}), \forall \bm{v}_h\in \bm{V}_h^0,\\
\overline{A}_h(\bm{u}_h;T_h,s_h)&= (g,s_{h0}), \forall s_h\in S_h^0.
\end{array}
\right.
\end{align}
For all $K\in \mathcal{T}_h^f$, taking $\bm{v}_{h0}|_{\mathcal{T}_h^f/K} = \bm{0}, \bm{v}_{hb} = \bm{0}$, $q_{h0}|_{\mathcal{T}_h^f/K} = 0, q_{hb} = 0$ and $T_{h0}|_{\mathcal{T}_h^f/K} = 0, T_{hb} = 0$ in (\ref{pb6}), we can get the following local problem:  seek $(\bm{u}_{h0},p_{h0},T_{h0})\in [P_k(K)]^d\times P_{k-1}(K)\times P_k(K)$ such that, for $\forall (\bm{v}_{h0},q_{h0})\in [P_k(K)]^d\times P_{k-1}(K), s_{h0}\in P_k(K)$, 
\begin{align}
\label{pb7}
\left \{
\begin{array}{rl}
A_{h,K}(\bm{u}_{h0};\bm{u}_{h0},\bm{v}_{h0})+b_{h,K}(\bm{v}_{h0},p_{h0})-b_{h,K}(\bm{u}_{h0},q_{h0})-d_{h,K}(T_{h0},\bm{v}_{h0})&= F_{h,K}(\bm{v}_{h0}),\\
\overline{A}_{h,K}(\bm{u}_{h0},T_{h0};s_{h0})&= G_{h,K}(s_{h0}).
\end{array}
\right.
\end{align}
where 
\begin{align*}
A_{h,K}(\bm{u}_{h0};\bm{u}_{h0},\bm{v}_{h0}) :=& a_{h,K}(\bm{u}_{h0},\bm{v}_{h0}) + c_{h,K}(\bm{u}_{h0};\bm{u}_{h0},\bm{v}_{h0});\\
a_{h,K}(\bm{u}_{h0},\bm{v}_{h0}) :=& \Pr (\nabla_{w,m}\{\bm{u}_{h0},\bm{0}\},\nabla_{w,m}\{\bm{v}_{h0},\bm{0}\}) + \Pr\langle \tau\bm{Q}_l^b\bm{u}_{h0},\bm{Q}_l^b\bm{v}_{h0}\rangle_{\partial K},\\
c_{h,K}(\bm{u}_{h0};\bm{u}_{h0},\bm{v}_{h0}) :=& \frac{1}{2}(\nabla_{w,k}\cdot \{\bm{u}_{h0}\otimes\bm{u}_{h0},\bm{0}\otimes \bm{0}\},\bm{v}_{h0})-\frac{1}{2}(\nabla_{w,k}\cdot \{\bm{v}_{h0}\otimes\bm{u}_{h0},\bm{0}\otimes\bm{0}\},\bm{u}_{h0}),\\
b_{h,K}(\bm{v}_{h0},p_{h0}) :=& (\nabla_{w,k}\{p_{h0},0\},\bm{v}_{h0}),\\
d_{h,K}(T_{h0},\bm{v}_{h0}) :=& \Pr Ra(\bm{j}T_{h0},\bm{v}_{h0}),\\
\overline{A}_{h,K}(\bm{u}_{h0},T_{h0};s_{h0}) :=& \overline{a}_{h,K}(T_{h0},s_{h0}) + \overline{c}_{h,K}(\bm{u}_{h0};T_{h0},s_{h0});\\
\overline{a}_{h,K}(T_{h0},s_{h0}) :=& \kappa (\nabla_{w,m}\{T_{h0},0\},\nabla_{w,m}\{s_{h0},0\}) + \kappa\langle \tau Q_l^b T_{h0},Q_l^bs_{h0}\rangle_{\partial K}\\
\overline{c}_{h,K}(\bm{u}_{h0};T_{h0},s_{h0}) :=& \frac{1}{2}(\nabla_{w,k}\cdot \{\bm{u}_{h0}T_{h0},\bm{0}\},s_{h0})-\frac{1}{2}(\nabla_{w,k}\cdot \{\bm{u}_{h0}s_{h0},\bm{0}\},T_{h0})\\
F_{h,K}(\bm{v}_{h0},q_{h0}) :=& (\bm{f},\bm{v}_{h0}) - \Pr (\nabla_{w,m}\{\bm{0},\bm{u}_{hb}\},\nabla_{w,m}\{\bm{v}_{h0},\bm{0}\}) +\Pr\langle \tau\bm{u}_{hb},\bm{Q}_l^b\bm{v}_{h0}\rangle_{\partial K} \\
& \quad -\frac{1}{2}(\nabla_{w,k}\cdot \{\bm{0}\otimes \bm{0},\bm{u}_{hb}\otimes\bm{u}_{hb}\},\bm{v}_{h0}) - (\nabla_{w,k}\{0,p_{hb}\},\bm{v}_{h0}),\\
G_{h,K}(s_{h0}): =& (g,s_{h0}) - \kappa (\nabla_{w,m}\{0,T_{hb}\},\nabla_{w,m}\{s_{h0},0\}) +\Pr\langle \tau T_{hb},\bm{Q}_l^bs_{h0}\rangle_{\partial K} \\
& \quad -\frac{1}{2}(\nabla_{w,k}\cdot \{\bm{0},\bm{u}_{hb}T_{hb}\},s_{h0}).
\end{align*}

For any $K\in \mathcal{T}_h$, we define the following semi-norms:
\begin{align*}
\interleave \bm{v}_{h0}\interleave_K  :=&\left( \lVert \nabla_{w,m}\{\bm{v}_{h0},\bm{0}\}\rVert_{0,K}^2 + \lVert \tau^{1/2}\bm{Q}_l^b\bm{v}_{h0}\rVert_{0,\partial K}^2\right)^{1/2},\\
\interleave s_{h0}\interleave_K: =&\left( \lVert \nabla_{w,m}\{s_{h0},0\}\rVert_{0,K}^2 + \lVert \tau^{1/2}Q_l^bs_{h0}\rVert_{0,\partial K}^2\right)^{1/2}.
\end{align*}
It is easy to see that the above semi-norms  are norms on the local spaces $[P_k(K)]^d$ and $ P_k(K)$, respectively. 

By following the same routine as  in Section 3 for the global  problem \eqref{pb2}, we can obtain   the following existence and uniqueness results for the local problem \eqref{pb6}.

\begin{thm} For   any given $\bm{u}_{hb}, p_{hb}$ and $T_{hb}$, and any $K\in \mathcal{T}_h$, the local problem (\ref{pb7}) admits at least one solution. In addition, it admits a unique solution if
	\begin{equation*}
	({\Pr}^{-1}\mathcal{N}_{h,K}Ra\kappa^{-1}+\mathcal{M}_{h,K}Ra\kappa^{-2})\lVert G_{h,K}\rVert_h + \mathcal{N}_{h,K}{\Pr}^{-2}\lVert F_{h,K}\rVert_h < 1, 
	\end{equation*}
	where 
	\begin{align*}
\mathcal{N}_{h,K} &:= \sup\limits_{0\neq\bm{w}_{h0},\bm{u}_{h0},\bm{v}_{h0}\in \bm{W}_{h,K}}\frac{c_{h,K}(\bm{w}_{h0};\bm{u}_{h0},\bm{v}_{h0})}{\interleave \bm{w}_{h0}\interleave_K\cdot \interleave \bm{u}_{h0}\interleave_K\cdot\interleave \bm{v}_{h0}\interleave_K},\\
\mathcal{M}_{h,K} &:= \sup\limits_{\substack{0\neq\bm{w}_{h0}\in \bm{W}_{h,K},\\ 0\neq T_{h0},s_{h0}\in P_k(K)}}\frac{\overline{c}_{h,K}(\bm{w}_{h0};T_{h0},s_{h0})}{\interleave \bm{w}_{h0}\interleave_K\cdot \interleave T_{h0}\interleave_K\cdot\interleave s_{h0}\interleave_K},\\
\lVert F_{h,K}\rVert_h &:= \sup\limits_{0\neq\bm{v}_{h0}\in \bm{W}_{h,K}}\frac{F_{h,K}(\bm{v}_{h0})}{\interleave \bm{v}_{h0}\interleave_K},\quad 
\lVert G_{h,K}\rVert_h := \sup\limits_{0\neq s_{h0}\in P_k(K)}\frac{G_{h,K}(s_{h0})}{\interleave s_{h0}\interleave_K},
\end{align*}
and $
\bm{W}_{h,K} := \{\bm{w}_{h0}\in [P_k(K)]^d:b_{h,K}(\bm{w}_{h0},q_{h0}) = 0,\forall q_{h0}\in P_{k-1}(K)\}.$

\end{thm}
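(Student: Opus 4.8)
The plan is to mirror, elementwise, the arguments used for the global problem in Section 3, now carried out on the finite-dimensional spaces $[P_k(K)]^d$, $P_{k-1}(K)$ and $P_k(K)$ equipped with the local norms $\interleave\cdot\interleave_K$. First I would record the local counterparts of the structural identities that drove the global proof: the coercivity relations $a_{h,K}(\bm{v}_{h0},\bm{v}_{h0})=\Pr\interleave\bm{v}_{h0}\interleave_K^2$ and $\overline{a}_{h,K}(s_{h0},s_{h0})=\kappa\interleave s_{h0}\interleave_K^2$, the skew-symmetry relations $c_{h,K}(\bm{w}_{h0};\bm{v}_{h0},\bm{v}_{h0})=0$ and $\overline{c}_{h,K}(\bm{w}_{h0};s_{h0},s_{h0})=0$ (the local analogues of \eqref{3356}), and the boundedness of all the bilinear and trilinear forms with respect to $\interleave\cdot\interleave_K$. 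The last of these guarantees that $\mathcal{N}_{h,K}$ and $\mathcal{M}_{h,K}$ are finite; it follows from a local version of Lemma \ref{discrete sobolev embedding} together with Lemma \ref{lem3.3}, the trace inequality of Lemma \ref{trace inequality}, and the inverse inequality on the single element $K$.

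Next I would eliminate the pressure. Reproducing the proof of Lemma \ref{infsup} on $K$ — using the commutativity property \eqref{com2} and the Raviart-Thomas projection — yields a local inf-sup inequality, which by the theory of mixed methods reduces \eqref{pb7} to the constrained problem of seeking $(\bm{u}_{h0},T_{h0})\in\bm{W}_{h,K}\times P_k(K)$ satisfying the two equations with the test functions restricted to $\bm{W}_{h,K}$ and $P_k(K)$; the pressure $p_{h0}$ is then recovered from the momentum equation. For existence I would follow Theorem \ref{exist} verbatim: fixing $\bm{u}_{h0}\in\bm{W}_{h,K}$, the temperature equation is continuous and coercive on $P_k(K)$, so the Lax-Milgram theorem yields a unique $T_{h0}=:F_K(\bm{u}_{h0})$ with $\interleave F_K(\bm{u}_{h0})\interleave_K\le\kappa^{-1}\lVert G_{h,K}\rVert_h$. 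Testing the reduced momentum equation with $\bm{u}_{h0}$ and using the skew-symmetry gives the a priori bound $\interleave\bm{u}_{h0}\interleave_K\le Ra\kappa^{-1}\lVert G_{h,K}\rVert_h+{\Pr}^{-1}\lVert F_{h,K}\rVert_h$. I would then define the solution map $\mathcal{A}_K:\bm{W}_{h,K}\to\bm{W}_{h,K}$ exactly as in \eqref{pbw} and apply a fixed-point theorem; since $\bm{W}_{h,K}$ is finite-dimensional, continuity of $\mathcal{A}_K$ together with the uniform a priori bound already yields a fixed point by Brouwer's theorem, so the Leray-Schauder machinery of Theorem \ref{exist} simplifies considerably.

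For the uniqueness part under the smallness condition I would copy the argument of Theorem \ref{thm32}. Supposing $\bm{u}_{1h0}$ and $\bm{u}_{2h0}$ are two solutions of the reduced momentum equation, I subtract the two equations, test with $\bm{u}_{1h0}-\bm{u}_{2h0}$, and use the skew-symmetry to kill the diagonal convection term. The cross term is controlled by $\mathcal{N}_{h,K}$ times the stability bound on $\interleave\bm{u}_{1h0}\interleave_K$, and the temperature-difference term is estimated through the local Lipschitz bound $\kappa\interleave F_K(\bm{u}_{1h0})-F_K(\bm{u}_{2h0})\interleave_K\apprle\interleave\bm{u}_{1h0}-\bm{u}_{2h0}\interleave_K\cdot\kappa^{-1}\lVert G_{h,K}\rVert_h$, the exact local analogue of \eqref{T1-T2}. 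Collecting these estimates produces $\Pr\interleave\bm{u}_{1h0}-\bm{u}_{2h0}\interleave_K^2\le\big((\mathcal{N}_{h,K}Ra\kappa^{-1}+\mathcal{M}_{h,K}\Pr Ra\kappa^{-2})\lVert G_{h,K}\rVert_h+\mathcal{N}_{h,K}{\Pr}^{-1}\lVert F_{h,K}\rVert_h\big)\interleave\bm{u}_{1h0}-\bm{u}_{2h0}\interleave_K^2$, and the hypothesis forces the bracketed factor strictly below $\Pr$, a contradiction unless the two solutions coincide. The main obstacle I anticipate is not any single step but the careful bookkeeping needed to verify that the forms built from the one-sided weak operators $\nabla_{w,m}\{\cdot,\bm{0}\}$ and $\nabla_{w,k}\cdot\{\cdot,\bm{0}\}$ still satisfy coercivity and skew-symmetry on the purely interior spaces, and that $\interleave\cdot\interleave_K$ is genuinely a norm there; once these local structural facts are in hand, the remainder transcribes directly from Section 3.
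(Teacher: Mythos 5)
Your proposal is correct and follows essentially the same route as the paper, which proves this theorem simply by transcribing the Section~3 arguments (local coercivity and skew-symmetry, Lax--Milgram for the temperature equation, a fixed-point argument for the velocity, and the small-data contradiction argument for uniqueness) to the element-level spaces with the norms $\interleave\cdot\interleave_K$. Your observation that Brouwer's theorem suffices in place of Leray--Schauder on the finite-dimensional space $\bm{W}_{h,K}$ is a harmless simplification, and your flagged verification that $\interleave\cdot\interleave_K$ is a genuine norm on $[P_k(K)]^d$ and $P_k(K)$ is exactly the point the paper itself records before the theorem.
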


\subsection{Iteration scheme}

Since the WG scheme \eqref{pb2} is nonlinear, we introduce the following Oseen's iteration scheme: given   $\bm{u}_h^0$, for $n=1,2,\cdots,$ and $\forall (\bm{v}_h,q_h,s_h)\in \bm{V}_h^0\times Q_h^0 \times S_h^0$, 
\begin{align}
\label{pboseen}
\left \{
\begin{array}{rl}
&a_h(\bm{u}_h^n,\bm{v}_h)+c_h(\bm{u}_h^{n-1};\bm{u}_h^n,\bm{v}_h)+b_h(\bm{v}_h,p_h^n)-b_h(\bm{u}_h^n,q_h)=(\bm{f},\bm{v}_{h0})+d_h(T_h^n,\bm{v}_h),\\
&\overline{a}_h(T_h^n,s_h)+\overline{c}_h(\bm{u}_h^{n-1};T_h^n,s_h)= (g,s_{h0}).
\end{array}
\right.
\end{align}
We have the following convergence theorem.
\begin{thm}
	Let $(\bm{u}_h,p_h,T_h)\in \bm{V}_h^0\times Q_h^0 \times S_h^0$ be the solution to the WG scheme (\ref{pb2}), and  assume that (\ref{c-0}) holds. Then the Oseen's iteration scheme (\ref{pboseen}) is   convergent in the following sense: 
	\begin{align*}
	\lim\limits_{n\longrightarrow \infty}\interleave \bm{u}_h^n- \bm{u}_h\interleave = 0, \lim\limits_{n\longrightarrow \infty}\lVert p_h^n- p_h\rVert = 0, \lim\limits_{n\longrightarrow \infty}\interleave T_h^n- T_h\interleave = 0.
	\end{align*}
\end{thm}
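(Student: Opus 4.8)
The plan is to treat \eqref{pboseen} as a linear Oseen iteration and to show that the velocity error contracts geometrically, after which the temperature and pressure errors are slaved to the velocity error. First I would verify that each step of \eqref{pboseen} is well defined: for given $\bm{u}_h^{n-1}$ the temperature equation is linear and, by \eqref{Core-Bar-A} together with Lemma \ref{lem31}, coercive and continuous on $S_h^0$, so the Lax--Milgram theorem yields a unique $T_h^n$; the velocity--pressure block is then a linear saddle-point problem whose well-posedness follows from the coercivity \eqref{coercivity ah} of $a_h$ on the divergence-free space $\bm{W}_h$ and the inf-sup condition of Lemma \ref{infsup}.

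Write $\bm{e}_u^n := \bm{u}_h^n-\bm{u}_h$, $e_p^n:=p_h^n-p_h$, and $e_T^n:=T_h^n-T_h$. Taking $\bm{v}_h=\bm{0}$ in the first equations of \eqref{pboseen} and \eqref{pb2} shows $\bm{u}_h^n,\bm{u}_h\in\bm{W}_h$, hence $\bm{e}_u^n\in\bm{W}_h$. Subtracting \eqref{pb2} from \eqref{pboseen} and using the splittings $c_h(\bm{u}_h^{n-1};\bm{u}_h^n,\cdot)-c_h(\bm{u}_h;\bm{u}_h,\cdot)=c_h(\bm{u}_h^{n-1};\bm{e}_u^n,\cdot)+c_h(\bm{e}_u^{n-1};\bm{u}_h,\cdot)$ and its analogue for $\overline{c}_h$, I obtain error equations for $(\bm{e}_u^n,e_p^n)$ and for $e_T^n$. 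Testing the temperature error equation with $s_h=e_T^n$, the term $\overline{c}_h(\bm{u}_h^{n-1};e_T^n,e_T^n)$ vanishes by \eqref{3356} while $\overline{a}_h(e_T^n,e_T^n)=\kappa\interleave e_T^n\interleave^2$, so Lemma \ref{boundedness} and the a priori bound $\interleave T_h\interleave\le\kappa^{-1}\lVert g\rVert_h$ of \eqref{regular estimates Th} give $\interleave e_T^n\interleave\le \mathcal{M}_h\kappa^{-2}\lVert g\rVert_h\,\interleave\bm{e}_u^{n-1}\interleave$.

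Next I would test the velocity error equation with $(\bm{v}_h,q_h)=(\bm{e}_u^n,e_p^n)$. Since $\bm{e}_u^n\in\bm{W}_h$ the two $b_h(\bm{e}_u^n,e_p^n)$ contributions cancel, and $c_h(\bm{u}_h^{n-1};\bm{e}_u^n,\bm{e}_u^n)=0$ by \eqref{3356}, leaving $\Pr\interleave\bm{e}_u^n\interleave^2=d_h(e_T^n,\bm{e}_u^n)-c_h(\bm{e}_u^{n-1};\bm{u}_h,\bm{e}_u^n)$. Bounding the right-hand side with \eqref{continuous result dh}, the definition \eqref{Nh}, the velocity bound \eqref{stabs}, and the temperature estimate just derived, I arrive at $\interleave\bm{e}_u^n\interleave\le\rho\,\interleave\bm{e}_u^{n-1}\interleave$ with $\rho=({\Pr}^{-1}\mathcal{N}_hRa\kappa^{-1}+\mathcal{M}_hRa\kappa^{-2})\lVert g\rVert_h+\mathcal{N}_h{\Pr}^{-2}\lVert\bm{f}\rVert_h$, which equals $1-C_0<1$ by \eqref{c-0}. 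Iterating gives $\interleave\bm{e}_u^n\interleave\le\rho^n\interleave\bm{e}_u^0\interleave\to0$, and hence $\interleave e_T^n\interleave\to0$ as well. Finally the pressure error follows from Lemma \ref{infsup} applied to the velocity error equation, $\lVert e_p^n\rVert\apprle\sup_{\bm{0}\neq\bm{v}_h\in\bm{V}_h^0}b_h(\bm{v}_h,e_p^n)/\interleave\bm{v}_h\interleave$, where every term on the right is controlled (via Lemma \ref{boundedness}) by $\interleave\bm{e}_u^n\interleave$, $\interleave\bm{e}_u^{n-1}\interleave$, and $\interleave e_T^n\interleave$, all of which tend to zero.

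The main obstacle is to pin the velocity contraction constant strictly below one. This rests on two points. The skew-symmetry \eqref{3356} removes the otherwise uncontrollable trilinear terms $c_h(\bm{u}_h^{n-1};\bm{e}_u^n,\bm{e}_u^n)$ and $\overline{c}_h(\bm{u}_h^{n-1};e_T^n,e_T^n)$, which both decouples the recursion so that $e_T^n$ is governed solely by $\bm{e}_u^{n-1}$ and eliminates any dependence on the unbounded iterate norm $\interleave\bm{u}_h^{n-1}\interleave$. One must then recognize that the coefficient assembled from Lemma \ref{boundedness} and the a priori bounds \eqref{stabs} and \eqref{regular estimates Th} is precisely the quantity appearing in the uniqueness condition \eqref{assu}, so that the hypothesis \eqref{c-0} forces $\rho=1-C_0<1$ and yields the geometric convergence.
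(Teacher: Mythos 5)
Your proposal is correct and follows essentially the same route as the paper: subtract the schemes, test with $(\bm{e}_u^n,e_p^n,e_T^n)$, use the skew-symmetry \eqref{3356} to kill the quadratic trilinear terms, slave $\interleave e_T^n\interleave$ to $\interleave\bm{e}_u^{n-1}\interleave$ via \eqref{regular estimates Th}, identify the contraction factor with $1-C_0$ through \eqref{stabs}, and recover the pressure from the inf-sup condition of Lemma \ref{infsup}. Your bookkeeping of the factor $Ra$ in the term $Ra\,\kappa^{-1}\mathcal{M}_h\interleave T_h\interleave$ is in fact slightly more careful than the paper's displayed intermediate inequality, and it is exactly what is needed for the coefficient to match \eqref{assu}.
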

\begin{proof}
	Set $\bm{e}_u^n := \bm{u}_h^n-\bm{u}_h, e_p^n: = p_h^n-p_h, e_T^n: = T_h^n-T_h$, then, from (\ref{pb2}) and (\ref{pboseen}), we have,  for  $\forall (\bm{v}_h,q_h,s_h)\in \bm{V}_h^0\times Q_h^0\times  S_h^0$, 
	\begin{align}\label{pbiteration}
	\left \{
	\begin{array}{rl}
	a_h(\bm{e}_u^n,\bm{v}_h)&=-b_h(\bm{v}_h,e_p^n)+b_h(e_u^n,q_h)+c_h(\bm{u}_h;\bm{u}_h,\bm{v}_h)-c_h(\bm{u}_h^{n-1};\bm{u}_h^n,\bm{v}_h)\\
	\overline{a}_h(e_T^n,s_h)&=\overline{c}_h(\bm{u}_h;T_h,s_h)-\overline{c}_h(\bm{u}_h^{n-1};T_h^n,s_h).
	\end{array}
	\right.
	\end{align}
	Taking $\bm{v}_h = \bm{e}_u^n, q_h = e_p^n, s_h = e_T^n$ in (\ref{pbiteration}), in view of \eqref{3356} and Lemma \ref{boundedness}, we get
	\begin{equation}\label{509}
	\begin{aligned}
	\Pr\interleave \bm{e}_u^n\interleave^2 
	=&c_h(\bm{u}_h;\bm{u}_h,\bm{e}_u^n)-c_h(\bm{u}_h^{n-1};\bm{u}_h^n,\bm{e}_u^n) + d_h(e_T^n,\bm{e}_u^n)\\
	=&-c_h(\bm{e}_u^{n-1};\bm{u}_h,\bm{e}_u^n)-c_h(\bm{u}_h^{n-1};\bm{e}_u^{n},\bm{e}_u^n)+d_h(e_T^n,\bm{e}_u^n)\\
	\leq&\mathcal{N}_h\interleave \bm{e}_u^{n-1}\interleave \interleave \bm{u}_h\interleave \interleave \bm{e}_u^n\interleave+\Pr Ra\interleave e_T^n\interleave\interleave \bm{e}_u^n\interleave,
	\end{aligned}
	\end{equation}
	\begin{equation}\label{510}
	\begin{aligned}
	\kappa\interleave e_T^n\interleave^2=&\overline{c}_h(\bm{u}_h;T_h,e_T^n)-\overline{c}_h(\bm{u}_h^{n-1};T_h^n,e_T^n)\\
	= &-\overline{c}_h(\bm{e}_u^{n-1};T_h,e_T^n)-\overline{c}_h(\bm{u}_h^{n-1};e_T^{n},e_T^n) \\
	\leq& \mathcal{M}_h\interleave \bm{e}_u^{n-1}\interleave \interleave T_h\interleave\interleave \bm{e}_T^n\interleave,
	\end{aligned}
	\end{equation}
	which,   together with \eqref{regular estimates Th}, \eqref{stabs}, and \eqref{c-0},	implies
	\begin{equation*}
	\begin{aligned}
	\interleave \bm{e}_u^n\interleave &\leq {\Pr}^{-1}\mathcal{N}_h\interleave \bm{e}_u^{n-1}\interleave \interleave \bm{u}_h\interleave +Ra\interleave e_T^n\interleave  \\
	&\leq ( {\Pr}^{-1}\mathcal{N}_h \interleave \bm{u}_h\interleave +\kappa^{-1}\mathcal{M}_h\interleave T_h\interleave)\interleave \bm{e}_u^{n-1}\interleave \\
	&\leq (1-C_0)\interleave \bm{e}_u^{n-1}\interleave \leq \cdots 
	\leq (1-C_0)^n\interleave \bm{e}_u^{0}\interleave .
	\end{aligned}
	\end{equation*}
	Since $0<C_0<1$, the above inequality leads to  the conclusion
	\begin{align}\label{ulim}
	\lim\limits_{n\longrightarrow \infty}\interleave \bm{u}_h^n- \bm{u}_h\interleave =\lim\limits_{n\longrightarrow \infty}\interleave\bm{e}_u^n\interleave= 0.
	\end{align}
	Thus, from   \eqref{510}  and \eqref{regular estimates Th} it follows
	\begin{align}\label{Tlim}
	\lim\limits_{n\longrightarrow \infty}\interleave T_h^n- T_h\interleave =\lim\limits_{n\longrightarrow \infty}\interleave e_T^n\interleave= 0.
	\end{align}
	
Finally, in light of  Lemma \ref{infsup} and the first equation of \eqref{pbiteration}, we obtain
\begin{align*}
\lVert e_p^n\rVert \leq &\sup\limits_{\bm{v}_h\in \bm{V}_h^0}\frac{b_h(\bm{v}_h,e_p^n)}{\interleave\bm{v}_h \interleave} \\
= &\sup\limits_{\bm{v}_h\in \bm{V}_h^0}\frac{1}{\interleave\bm{v}_h \interleave}(-a_h(\bm{e}_u^n,\bm{v}_h)+c_h(\bm{u}_h;\bm{u}_h,\bm{v}_h)-c_h(\bm{u}_h^{n-1};\bm{u}_h^n,\bm{v}_h)+d_h(e_T^n,\bm{v}_h))\\
 = &\sup\limits_{\bm{v}_h\in \bm{V}_h^0}\frac{1}{\interleave\bm{v}_h \interleave}(-a_h(\bm{e}_u^n,\bm{v}_h)-c_h(\bm{e}_u^{n-1};\bm{u}_h,\bm{v}_h)-c_h(\bm{e}_u^{n-1};\bm{e}_u^n,\bm{v}_h)-c_h(\bm{e}_u^{n};\bm{u}_h,\bm{v}_h)+d_h(e_T^n,\bm{v}_h))\\
\leq  &\Pr\interleave \bm{e}_u^{n}\interleave +\mathcal{N}_h(\interleave \bm{u}_h\interleave\interleave \bm{e}_u^{n-1}\interleave+\interleave \bm{e}_u^{n}\interleave\interleave \bm{e}_u^{n-1}\interleave+\interleave \bm{u}_h\interleave\interleave \bm{e}_u^{n}\interleave) +\Pr Ra\interleave e_T^n \interleave,
\end{align*}
which, together with (\ref{ulim}) and (\ref{Tlim}), yields 
  $\lim\limits_{n\longrightarrow \infty}\lVert p_h^n- p_h\rVert = 0$.
\end{proof}

\section{Numerical experiments}
In this section, we shall show some numerical results to examine the  performance of the proposed WG   methods for the natural convection equations. The Oseen's iteration scheme \eqref{pboseen} with initial guess $\bm{u}_h^0=0$ is used  in  all the  numerical experiments. 

We consider three cases of our WG methods with $k=1,2$:
\begin{eqnarray*}
\begin{aligned}
WG-I&:& l &= k,&m& = k,\\
WG-II&:& l &= k,&m& = k-1,\\
WG-III&:& l &= k-1,&m& = k-1.
\end{aligned}
\end{eqnarray*}

\begin{exmp} Take   $\Omega = [-1,1]\times[0,1]$ and $\Omega_f = [0,1]\times[0,1]$. 
	The exact solution to the problem \eqref{pb1} is given by
	\begin{align*}
	\left \{
	\begin{array}{rll}
	u_1 &= -x^2(x-1)^2y(y-1)(2y-1) & \text{in} \quad\Omega_f,\\
	u_2 &= y^2(y-1)^2x(x-1)(2x-1) & \text{in}\quad\Omega_f,\\
	p &= x^6-y^6 & \text{in} \quad\Omega_f,\\
	T &= (x-1)(x+1)y(y-1) & \text{in} \quad\Omega.
	\end{array}
	\right.
	\end{align*}
	with $\Pr  = 1,\kappa = 1, Ra = 10$. Regular triangular  meshes are used for the computation (see Figure 1).
\end{exmp}

\begin{figure}[htbp]
	\centering
	\begin{minipage}[t]{0.3\textwidth}		
	\includegraphics[height = 2cm,width=4 cm]{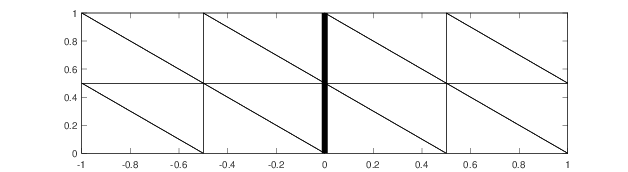} 	
	\end{minipage}
	\begin{minipage}[t]{0.3\textwidth}	
	\includegraphics[height = 2cm,width=4 cm]{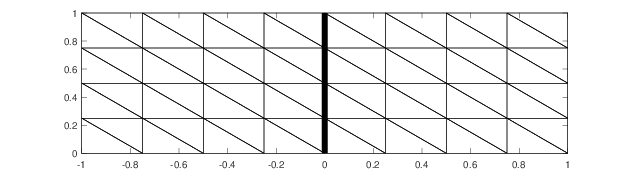}			
	\end{minipage}	
		\caption{Regular triangular meshes:  $4\times 2$ mesh (Left) and $ 8\times 4$ mesh (Right)}	\label{division}
\end{figure}

 Tables 1 and 2 show the history of convergence for the velocity $\bm{u}_{h0} $, pressure $p_{h0}$, and temperature $T_{h0}$. Results of  $div_hU_h =: \max\limits_{K\in \mathcal{T}_h^f}h_K^{-1}\lVert \nabla\cdot \bm{u}_{h0}\rVert_{0,K}$
  are also listed. 
 From the numerical results we have the following observations:
  
  \begin{itemize}
  \item The convergence rates of $\lVert \nabla\bm{u}-\nabla_h\bm{u}_{h0} \rVert_0,$ $ \lVert p-p_{h0} \rVert_0$, and $\lVert \nabla  T-\nabla_hT_{h0} \rVert_0$ for the proposed WG methods  with $k = 1,2$ are  of $k^{\text{th}}$ orders, as is consistent with the theoretical results. In addition, the convergence rates of $\lVert \bm{u}-\bm{u}_{h0}\rVert_0$ and $\lVert T-T_{h0} \rVert_0$ are of $(k+1)^{\text{th}}$ orders.
  
  \item Since $\lVert \nabla_h\cdot \bm{u}_{h0}\rVert_{0,\infty}\apprle \max\limits_{K\in \mathcal{T}_h^f}h_K^{-1}\lVert \nabla\cdot \bm{u}_{h0}\rVert_{0,K}$, the velocity approximations obtained
by our  methods are globally divergence-free, which are conformable to the conclusion in Remark 2.1.
  
  \end{itemize}

\begin{table}[H]
	\normalsize
	\caption{Results for different methods with  $k = 1$ }\label{KK1}
	\centering
\tiny
	\subtable[Method:WG-I]{
	\begin{tabular}{p{0.75cm}<{\centering}|p{1.24cm}<{\centering}|p{0.3cm}<{\centering}|p{1.24cm}<{\centering}|p{0.3cm}<{\centering}|p{1.24cm}<{\centering}|p{0.3cm}<{\centering}|p{1.24cm}<{\centering}|p{0.3cm}<{\centering}|p{1.24cm}<{\centering}|p{0.3cm}<{\centering}|p{1.24cm}<{\centering}}
			\hline   
			\multirow{2}{*}{mesh}&  
			\multicolumn{2}{c|}{ $\frac{\lVert \nabla\bm{u}-\nabla_h\bm{u}_{h0} \rVert_0}{\lVert \nabla\bm{u}\rVert_0}$}&\multicolumn{2}{c|}{$\frac{\lVert \bm{u}-\bm{u}_{h0}\rVert_0}{\lVert \bm{u}\rVert_0}$}&\multicolumn{2}{c|}{ $\frac{\lVert p-p_{h0} \rVert_0}{\lVert p\rVert_0}$}&\multicolumn{2}{c|}{ $\frac{\lVert \nabla  T-\nabla_hT_{h0} \rVert_0}{\lVert \nabla T\rVert_0}$}&\multicolumn{2}{c|}{ $\frac{\lVert T-T_{h0} \rVert_0}{\lVert T\rVert_0}$}
			&\multirow{2}{*}{$div_hU_h$}\cr\cline{2-11}  
			&error&order&error&order&error&order&error&order&error&order&\cr  
			\cline{1-12}
			
			$8\times 4$	    &5.9412E-01    &	    &1.6959E-01	  &       &4.4819E-01   &       &2.4656E-01   &       &2.7341E-02   &      &3.9988E-16\\
			\hline
			$16\times 8$	    &3.1494E-01    &0.92	&4.7778E-02	  &1.83	  &2.3637E-01 	&0.92   &1.2464E-01   &0.98   &6.8747E-03   &1.99  &1.9062E-15\\
			\hline
			$32\times 16$	&1.5988E-01    &0.98	&1.2396E-02	  &1.95	  &1.1983E-01	&0.98   &6.2498E-02   &0.99   &1.7191E-03   &2.00  &3.0715E-15\\
			\hline
			$64\times 32$	&8.0247E-02    &0.99	&3.1249E-03	  &1.99	  &6.0122E-02	&0.99   &3.1272E-02   &1.00   &4.2894E-04   &2.00  &3.1834E-14\\
			\hline
			$128\times 64$	&4.0162E-02    &1.00	&7.8018E-04	  &2.00	  &3.0087E-02	&1.00   &1.5639E-02   &1.00   &1.0704E-04   &2.00  &4.6475E-14\\
			\hline
		\end{tabular}
}
\subtable[Method:WG-II]{
	\begin{tabular}{p{0.75cm}<{\centering}|p{1.24cm}<{\centering}|p{0.3cm}<{\centering}|p{1.24cm}<{\centering}|p{0.3cm}<{\centering}|p{1.24cm}<{\centering}|p{0.3cm}<{\centering}|p{1.24cm}<{\centering}|p{0.3cm}<{\centering}|p{1.24cm}<{\centering}|p{0.3cm}<{\centering}|p{1.24cm}<{\centering}}
		\hline   
		\multirow{2}{*}{mesh}&  
		\multicolumn{2}{c|}{ $\frac{\lVert \nabla\bm{u}-\nabla_h\bm{u}_{h0} \rVert_0}{\lVert \nabla\bm{u}\rVert_0}$}&\multicolumn{2}{c|}{$\frac{\lVert \bm{u}-\bm{u}_{h0}\rVert_0}{\lVert \bm{u}\rVert_0}$}&\multicolumn{2}{c|}{ $\frac{\lVert p-p_{h0} \rVert_0}{\lVert p\rVert_0}$}&\multicolumn{2}{c|}{ $\frac{\lVert \nabla  T-\nabla_hT_{h0} \rVert_0}{\lVert \nabla T\rVert_0}$}&\multicolumn{2}{c|}{ $\frac{\lVert T-T_{h0} \rVert_0}{\lVert T\rVert_0}$}
		&\multirow{2}{*}{$div_hU_h$}\cr\cline{2-11}  
		&error&order&error&order&error&order&error&order&error&order&\cr 
		\cline{1-12}
		
		$8\times 4$	    &7.0486E-01    &	    &7.8104E-01	  &       &4.7353E-01   &       &2.6104E-01   &       &1.3922E-01   &      &1.5492E-15\\
		\hline
		$16\times 8$	    &3.2996E-01    &1.10	&1.8899E-01	  &2.05	  &2.3962E-01 	&0.98   &1.2868E-01   &1.02   &3.5017E-02   &1.99  &5.5321E-16\\
		\hline
		$32\times 16$	&1.6192E-01    &1.03	&4.8031E-02	  &1.98	  &1.2025E-01	&0.99   &6.4066E-02   &1.01   &8.7749E-03   &2.00  &6.8348E-15\\
		\hline
		$64\times 32$	&8.0518E-02    &1.01	&1.2196E-02	  &1.98	  &6.0178E-02	&1.00   &3.1996E-02   &1.00   &2.1989E-03   &2.00  &9.5579E-15\\
		\hline
		$128\times 64$	&4.0158E-02    &1.00	&3.0774E-03	  &1.99	  &3.0095E-02	&1.00   &1.5993E-02   &1.00   &5.5203E-04   &2.00  &2.0390E-14\\
		\hline
	\end{tabular}
}
\subtable[Method:WG-III]{
\begin{tabular}{p{0.75cm}<{\centering}|p{1.24cm}<{\centering}|p{0.3cm}<{\centering}|p{1.24cm}<{\centering}|p{0.3cm}<{\centering}|p{1.24cm}<{\centering}|p{0.3cm}<{\centering}|p{1.24cm}<{\centering}|p{0.3cm}<{\centering}|p{1.24cm}<{\centering}|p{0.3cm}<{\centering}|p{1.24cm}<{\centering}}
		\hline   
		\multirow{2}{*}{mesh}&  
		\multicolumn{2}{c|}{ $\frac{\lVert \nabla\bm{u}-\nabla_h\bm{u}_{h0} \rVert_0}{\lVert \nabla\bm{u}\rVert_0}$}&\multicolumn{2}{c|}{$\frac{\lVert \bm{u}-\bm{u}_{h0}\rVert_0}{\lVert \bm{u}\rVert_0}$}&\multicolumn{2}{c|}{ $\frac{\lVert p-p_{h0} \rVert_0}{\lVert p\rVert_0}$}&\multicolumn{2}{c|}{ $\frac{\lVert \nabla  T-\nabla_hT_{h0} \rVert_0}{\lVert \nabla T\rVert_0}$}&\multicolumn{2}{c|}{ $\frac{\lVert T-T_{h0} \rVert_0}{\lVert T\rVert_0}$}
		&\multirow{2}{*}{$div_hU_h$}\cr\cline{2-11}  
		&error&order&error&order&error&order&error&order&error&order&\cr 
		\cline{1-12}
		
		$8\times 4$	    &7.4774E-01    &	    &8.3792E-01	  &       &4.7910E-01   &       &3.1663E-01   &       &1.6162E-01   &      &1.0304E-16\\
		\hline
		$16\times 8$	    &3.3583E-01    &1.02	&1.9985E-01	  &2.07	  &2.4031E-01 	&0.99   &1.5503E-01   &1.03   &4.0626E-02   &1.99  &2.0466E-16\\
		\hline
		$32\times 16$	&1.6272E-01    &1.01	&5.0551E-02	  &1.98	  &1.2033E-01	&1.00   &7.7080E-02   &1.01   &1.0178E-02   &2.00  &1.7369E-15\\
		\hline
		$64\times 32$	&8.0623E-02    &1.00	&1.2810E-02	  &1.98	  &6.0183E-02	&1.00   &3.8485E-02   &1.00   &2.5494E-03   &2.00  &2.3551E-15\\
		\hline
		$128\times 64$	&4.0212E-02    &1.00	&3.2282E-03	  &1.99	  &3.0093E-02	&1.00   &1.9235E-02   &1.00   &6.3946E-04   &2.00  &6.5944E-15\\
		\hline
	\end{tabular}
}
\end{table}

\renewcommand\arraystretch{1.1}
\begin{table}[H]
	\normalsize
	\caption{Results for different methods with   $k = 2$ }\label{KK2}
	\centering
\tiny
	\subtable[Method:WG-I]{
	\begin{tabular}{p{0.75cm}<{\centering}|p{1.24cm}<{\centering}|p{0.3cm}<{\centering}|p{1.24cm}<{\centering}|p{0.3cm}<{\centering}|p{1.24cm}<{\centering}|p{0.3cm}<{\centering}|p{1.24cm}<{\centering}|p{0.3cm}<{\centering}|p{1.24cm}<{\centering}|p{0.3cm}<{\centering}|p{1.24cm}<{\centering}}
			\hline   
			\multirow{2}{*}{mesh}&  
			\multicolumn{2}{c|}{ $\frac{\lVert \nabla\bm{u}-\nabla_h\bm{u}_{h0} \rVert_0}{\lVert \nabla\bm{u}\rVert_0}$}&\multicolumn{2}{c|}{$\frac{\lVert \bm{u}-\bm{u}_{h0}\rVert_0}{\lVert \bm{u}\rVert_0}$}&\multicolumn{2}{c|}{ $\frac{\lVert p-p_{h0} \rVert_0}{\lVert p\rVert_0}$}&\multicolumn{2}{c|}{ $\frac{\lVert \nabla  T-\nabla_hT_{h0} \rVert_0}{\lVert \nabla T\rVert_0}$}&\multicolumn{2}{c|}{ $\frac{\lVert T-T_{h0} \rVert_0}{\lVert T\rVert_0}$}
			&\multirow{2}{*}{$div_hU_h$}\cr\cline{2-11}  
			&error&order&error&order&error&order&error&order&error&order&\cr 
			\cline{1-12}
			
			$8\times 4$	    &1.6192E-01    &	    &2.8177E-02	  &       &6.6611E-02   &       &2.3814E-02   &       &1.5210E-03   &      &1.5852E-15\\
			\hline
			$16\times 8$	    &4.2800E-02    &1.92	&3.6801E-03	  &2.94	  &1.7476E-02 	&1.92   &5.9899E-03   &1.98   &1.9029E-04   &2.99  &1.3501E-14\\
			\hline
			$32\times 16$	&1.0767E-02    &1.99	&4.6124E-04	  &2.99	  &4.4430E-03	&1.98   &1.4995E-03   &1.99   &2.3790E-05   &3.00  &6.8867E-14\\
			\hline
			$64\times 32$	&2.6808E-03    &2.01	&5.7386E-05	  &3.01	  &1.1115E-03	&1.99   &3.7495E-04   &2.00   &2.9736E-06   &3.00  &3.8064E-14\\
			\hline
			$128\times 64$	&6.7021E-04    &2.00	&7.1513E-06	  &3.00	  &2.7795E-04	&2.00   &9.3738E-05   &2.00   &3.7173E-07   &3.00  &7.2047E-14\\
			\hline
		\end{tabular}
	}
	\subtable[Method:WG-II]{
	\begin{tabular}{p{0.75cm}<{\centering}|p{1.24cm}<{\centering}|p{0.3cm}<{\centering}|p{1.24cm}<{\centering}|p{0.3cm}<{\centering}|p{1.24cm}<{\centering}|p{0.3cm}<{\centering}|p{1.24cm}<{\centering}|p{0.3cm}<{\centering}|p{1.24cm}<{\centering}|p{0.3cm}<{\centering}|p{1.24cm}<{\centering}}
			\hline   
			\multirow{2}{*}{mesh}&  
			\multicolumn{2}{c|}{ $\frac{\lVert \nabla\bm{u}-\nabla_h\bm{u}_{h0} \rVert_0}{\lVert \nabla\bm{u}\rVert_0}$}&\multicolumn{2}{c|}{$\frac{\lVert \bm{u}-\bm{u}_{h0}\rVert_0}{\lVert \bm{u}\rVert_0}$}&\multicolumn{2}{c|}{ $\frac{\lVert p-p_{h0} \rVert_0}{\lVert p\rVert_0}$}&\multicolumn{2}{c|}{ $\frac{\lVert \nabla  T-\nabla_hT_{h0} \rVert_0}{\lVert \nabla T\rVert_0}$}&\multicolumn{2}{c|}{ $\frac{\lVert T-T_{h0} \rVert_0}{\lVert T\rVert_0}$}&\multirow{2}{*}{$div_hU_h$}\cr\cline{2-11}  
		&error&order&error&order&error&order&error&order&error&order&\cr 
			\cline{1-12}
			
			$8\times 4$	    &2.5023E-01    &	    &5.9209E-02	  &       &6.6212E-02   &       &4.1197E-02   &       &4.9610E-03   &      &6.5550E-16\\
			\hline
			$16\times 8$	    &6.3163E-02    &1.98	&7.4474E-03	  &2.99	  &1.7485E-02 	&1.92   &1.0276E-02   &1.99   &6.1111E-04   &3.02  &7.4872E-15\\
			\hline
			$32\times 16$	&1.5659E-02    &2.01	&9.3395E-04	  &2.99	  &4.4432E-03	&1.98   &2.5691E-03   &2.00   &7.5883E-05   &3.01  &5.6488E-15\\
			\hline
			$64\times 32$	&3.8820E-03    &2.01	&1.1720E-04	  &3.00	  &1.1117E-03	&2.00   &6.4257E-04   &2.00   &9.4569E-06   &3.00  &2.4648E-14\\
			\hline
			$128\times 64$	&9.6547E-04    &2.00	&1.4685E-05	  &3.00	  &2.7815E-04	&2.00   &1.6070E-04   &2.00   &1.1804E-06   &3.00  &2.1412E-13\\
			\hline
		\end{tabular}
	}
	\subtable[Method:WG-III]{
	\begin{tabular}{p{0.75cm}<{\centering}|p{1.24cm}<{\centering}|p{0.3cm}<{\centering}|p{1.24cm}<{\centering}|p{0.3cm}<{\centering}|p{1.24cm}<{\centering}|p{0.3cm}<{\centering}|p{1.24cm}<{\centering}|p{0.3cm}<{\centering}|p{1.24cm}<{\centering}|p{0.3cm}<{\centering}|p{1.24cm}<{\centering}}
			\hline   
			\multirow{2}{*}{mesh}&  
			\multicolumn{2}{c|}{ $\frac{\lVert \nabla\bm{u}-\nabla_h\bm{u}_{h0} \rVert_0}{\lVert \nabla\bm{u}\rVert_0}$}&\multicolumn{2}{c|}{$\frac{\lVert \bm{u}-\bm{u}_{h0}\rVert_0}{\lVert \bm{u}\rVert_0}$}&\multicolumn{2}{c|}{ $\frac{\lVert p-p_{h0} \rVert_0}{\lVert p\rVert_0}$}&\multicolumn{2}{c|}{ $\frac{\lVert \nabla  T-\nabla_hT_{h0} \rVert_0}{\lVert \nabla T\rVert_0}$}&\multicolumn{2}{c|}{ $\frac{\lVert T-T_{h0} \rVert_0}{\lVert T\rVert_0}$}
			&\multirow{2}{*}{$div_hU_h$}\cr\cline{2-11}  
			&error&order&error&order&error&order&error&order&error&order&\cr 
			\cline{1-12}
			
			$8\times 4$	    &1.3075E-01    &	    &6.2217E-02	  &       &6.6237E-02   &       &2.1605E-02   &       &5.3332E-03   &      &1.6739E-15\\
			\hline
			$16\times 8$	    &3.4979E-02    &1.90	&7.6750E-03	  &3.02	  &1.7492E-02 	&1.92   &5.4667E-03   &1.98   &6.6033E-04   &3.02  &3.5685E-15\\
			\hline
			$32\times 16$	&8.9627E-03    &1.96	&9.4948E-04	  &3.01	  &4.4333E-03	&1.98   &1.3734E-03   &1.99   &8.2232E-05   &3.01  &3.2603E-14\\
			\hline
			$64\times 32$	&2.2617E-03    &1.99	&1.1834E-04	  &3.00	  &1.1121E-03	&2.00   &3.4409E-04   &2.00   &1.0263E-05   &3.00  &7.3185E-14\\
			\hline
			$128\times 64$	&5.6761E-04    &2.00	&1.4791E-05	  &3.00	  &2.7826E-04	&2.00   &8.6112E-05   &2.00   &1.2821E-06   &3.00  &2.6642E-13\\
			\hline
		\end{tabular}
	}
\end{table}
\begin{figure}[htp]
	\centering

		\includegraphics[height = 4 cm,width=8 cm]{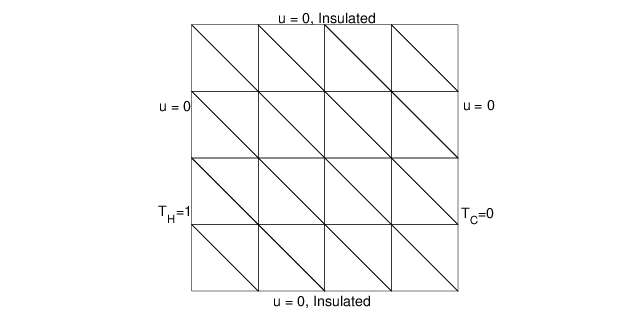} 	

	\caption{The physical domain with its boundary conditions: $4\times4$ mesh}\label{buoyancy}
\end{figure}
 \begin{exmp}
 	We consider   the well-known test cave for the natural convection codes which is called buoyancy-driven cavity problem.  This problem describes 
the two-dimensional flow of a Boussinesq fluid  in an upright square cavity of side $L=1$.   Fig.\ref{buoyancy} shows the physical domain with  the boundary conditions.  The velocity is
zero on all the boundaries. The horizontal walls are insulated with $\frac{\partial T}{\partial \bm{n}} = 0$, and the vertical sides are at
temperatures $T_H=1$ and $T_C=0$. We take  $\Omega=\Omega_f=[0,1]\times [0,1]$,  $\kappa = 1, \Pr = 0.71, \bm{f} = 0$, and $g = 0$.
 \end{exmp}


For different Rayleigh numbers, i.e. $Ra=10^3, 10^4,10^5, 10^6,10^7$,  we use the WG-I method with $k=1,2$ to compute the following quantities at  different mesh sizes: 
\begin{table}[H]
	\normalsize
	\centering{
		\begin{tabular}{c|c p{5cm}|}
			\hline     			
			$u1_{max}$      &the maximum horizontal velocity on the vertical mid-plane of the cavity   \\
			\hline
			$u2_{max}$      &the maximum vertical velocity on the horizontal mid-plane of the cavity   \\
			\hline
			$\overline{Nu}$ &the average Nusselt number throughout the cavity   \\
			\hline
			$Nu_{max}$        &the maximum value of the local Nusselt number on the boundary at x=0   \\
			\hline
			$Nu_{min}$        &the minimum value of the local Nusselt number on the boundary at x=0 \\
			\hline
		\end{tabular}
	}
\end{table}

The results are listed in Table  \ref{num2} and compared with the famous benchmark solutions of de Vahl Davis \cite{de1983natural} and of some other authors such as Manzari \cite{manzari1999explicit}, Massarotti et al \cite{massarotti1998characteristic}, Wan et al \cite{c2001new}, and Zhang et al \cite{Zhang2014Error}. Figure \ref{streamline1}  and Figure \ref{isotherms1} show the contour maps of the stream function and the isotherms of the flow. 
We have the following observations: 
%
\begin{itemize}
\item From Table  \ref{num2} we can see that the WG-I method gives   good  results for all the quantities for different Rayleigh numbers. In particular, the method with $k=2$ behaves very well at the coarsest mesh $40\times40$. 

 \item   Figure \ref{streamline1} demonstrates  that, as Rayleigh number $Ra $ increases, the circular vortex at the cavity center begins to deform into an ellipse and then breaks up into two vortices, and then there's a big vortex in the center. 

\item Figure \ref{isotherms1} shows that, when Rayleigh number $Ra $ is small, the heat transfer mainly depends on  heat conduction (isotherms almost vertical), with the increasing of $Ra$, the heat transfer pattern gradually turns to heat convection and boundary layers appear  around the two walls (isotherms   almost horizontal at the center).
\end{itemize}
\begin{figure}[ht]
	\centering
	\begin{minipage}[t]{0.18\textwidth}		
		\includegraphics[height = 3 cm,width=3 cm]{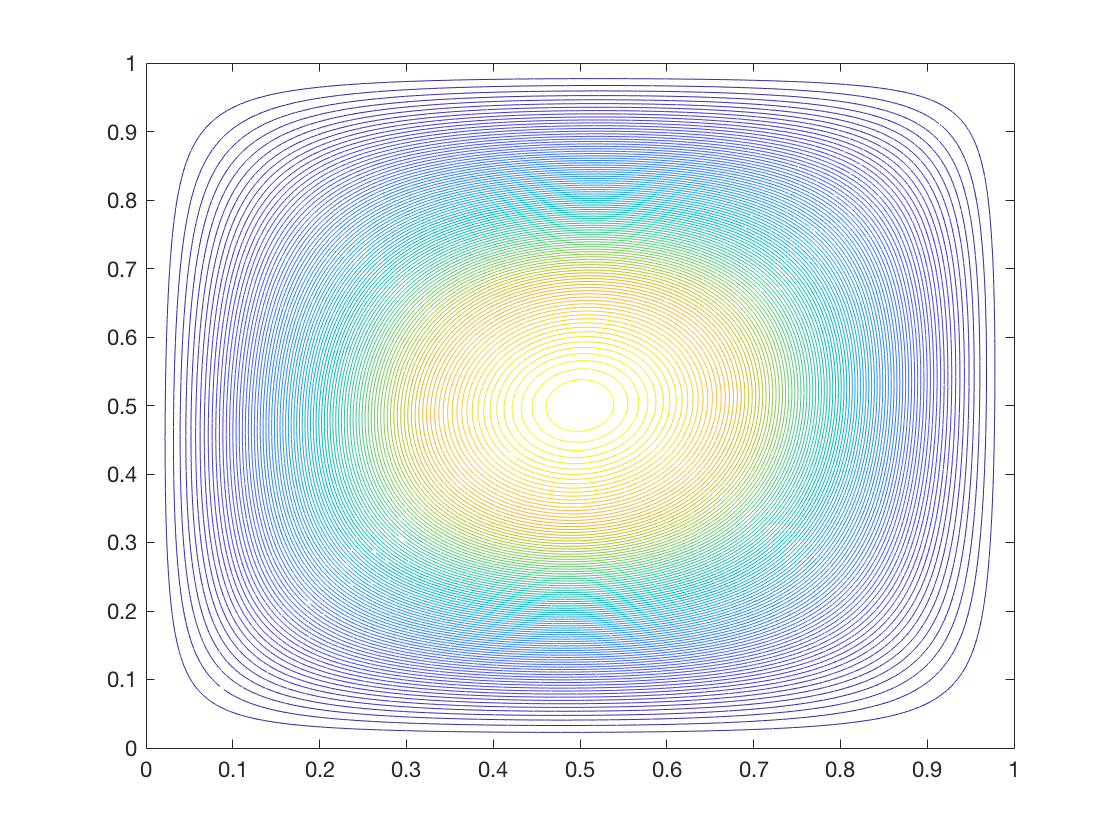} 	
	\end{minipage}
	\begin{minipage}[t]{0.18\textwidth}		
		\includegraphics[height = 3 cm,width=3 cm]{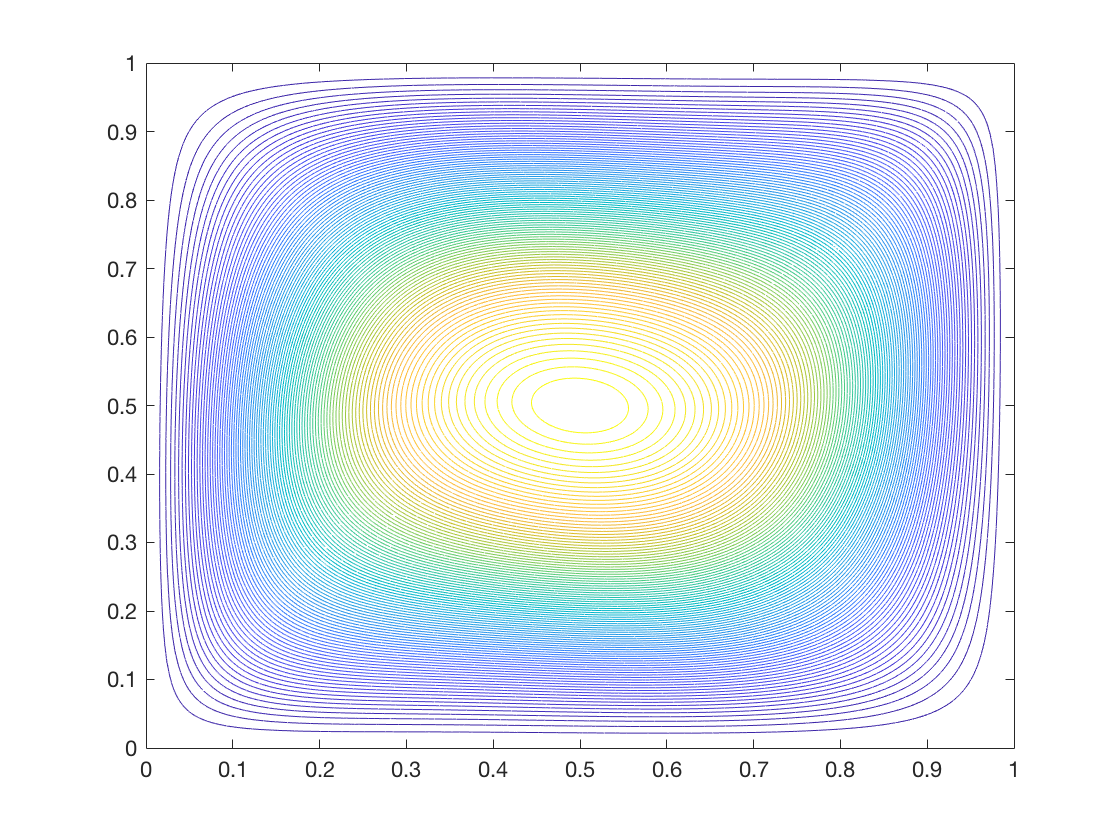} 	
	\end{minipage}
	\begin{minipage}[t]{0.18\textwidth}		
		\includegraphics[height = 3 cm,width=3 cm]{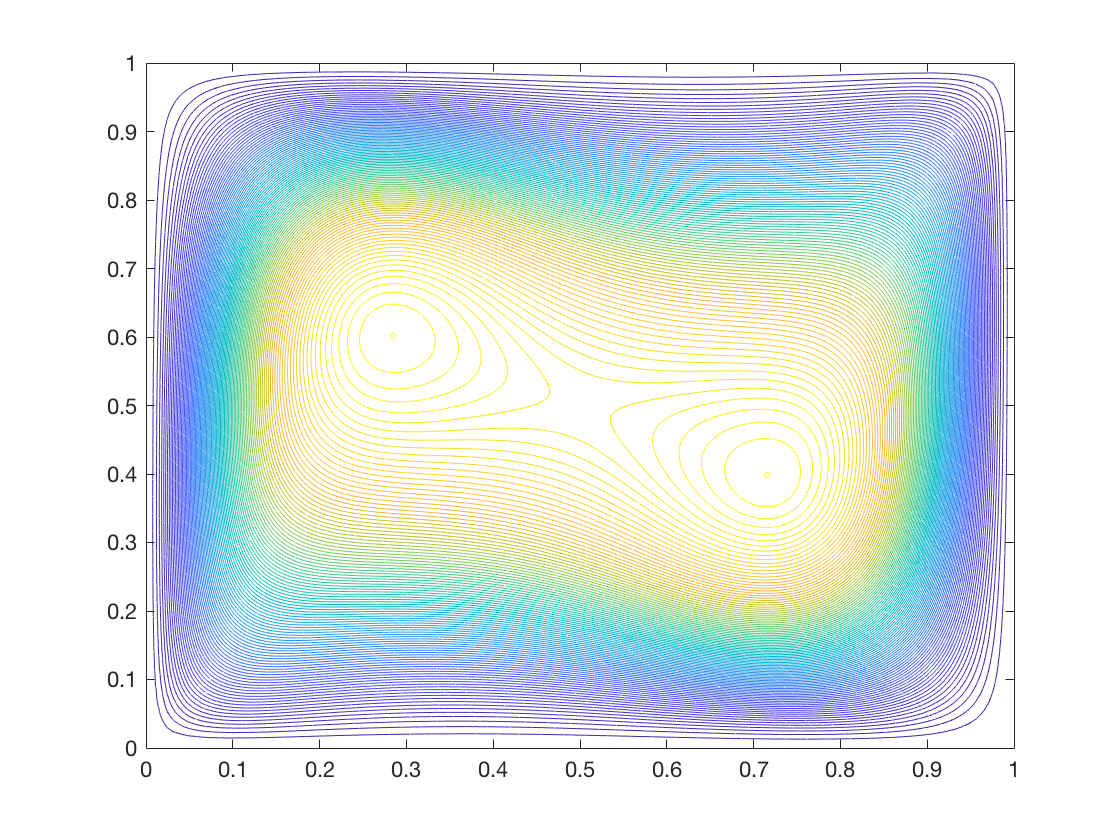} 	
	\end{minipage}
	\begin{minipage}[t]{0.18\textwidth}		
		\includegraphics[height = 3 cm,width=3 cm]{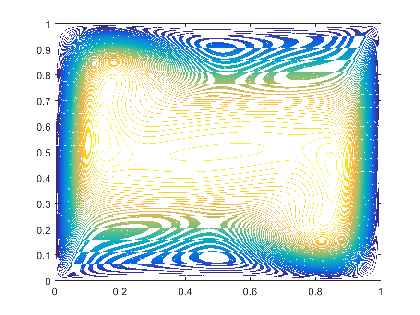} 	
	\end{minipage}
	\begin{minipage}[t]{0.18\textwidth}		
		\includegraphics[height = 3 cm,width=3 cm]{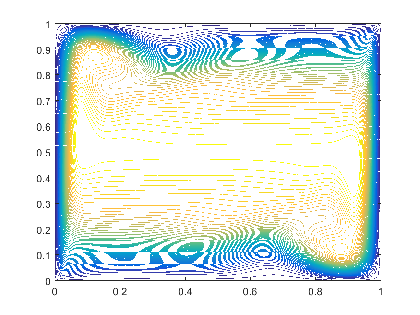} 	
	\end{minipage}
	\tiny\caption{Contour maps of stream function (left to right) with Ra = $10^3, 10^4, 10^5, 10^6, 10^7$.}	\label{streamline1}
\end{figure}

\begin{figure}[ht]
	\centering
	\begin{minipage}[t]{0.18\textwidth}		
		\includegraphics[height = 3 cm,width=3 cm]{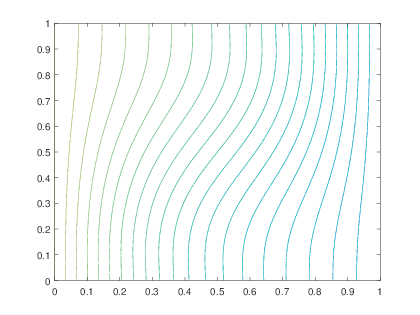} 	
	\end{minipage}
	\begin{minipage}[t]{0.18\textwidth}		
		\includegraphics[height = 3 cm,width=3 cm]{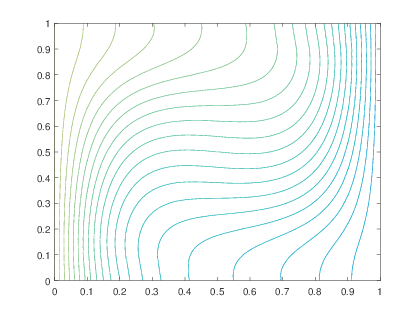} 	
	\end{minipage}
	\begin{minipage}[t]{0.18\textwidth}		
		\includegraphics[height = 3 cm,width=3 cm]{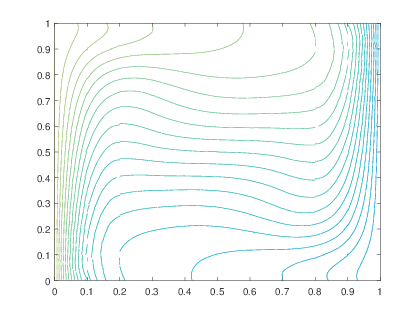} 
	\end{minipage}
	\begin{minipage}[t]{0.18\textwidth}		
		\includegraphics[height = 3 cm,width=3 cm]{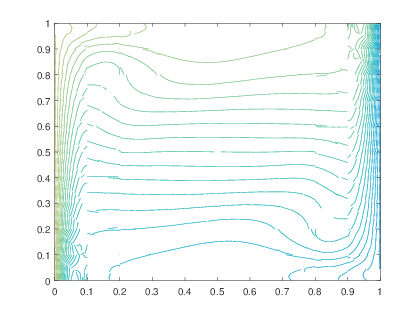} 			
	\end{minipage}
	\begin{minipage}[t]{0.18\textwidth}		
		\includegraphics[height = 3 cm,width=3 cm]{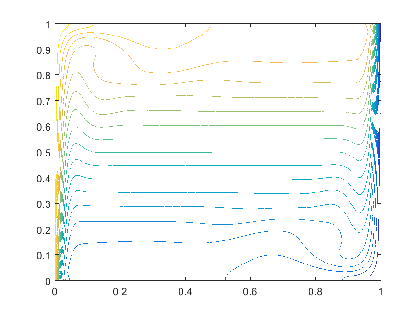} 			
	\end{minipage}
	\tiny\caption{Isotherms(left to right) with Ra = $10^3, 10^4, 10^5, 10^6, 10^7$}	\label{isotherms1}
\end{figure}

\renewcommand\arraystretch{1.1}
\begin{table}[H]
	\normalsize	
	\centering
	\caption{Natural convection in a square cavity: comparison with the benchmark solutions }\label{num2}
	\centering
\scriptsize
	\begin{threeparttable}
	\centering{
	\begin{tabular}{p{0.25cm}<{\centering}|p{0.75cm}<{\centering}|p{1.17cm}<{\centering}|p{1.17cm}<{\centering}|p{1.17cm}<{\centering}|p{1.17cm}<{\centering}|p{0.8cm}<{\centering}|p{0.9cm}<{\centering}|p{0.83cm}<{\centering}|p{0.83cm}<{\centering}|p{1.23cm}<{\centering}}
		\hline   
			\multirow{2}{*}{ Ra}&{}&{WG-I,k=1}&{WG-I,k=1}&{WG-I,k=2}&{WG-I,k=2}&{Ref.\cite{de1983natural}}&{ Ref.\cite{Zhang2014Error}}&{Ref.\cite{manzari1999explicit}}&{ Ref.\cite{massarotti1998characteristic}}&{ Ref.\cite{c2001new}}\cr
		                        &{}&{$40\times 40$}&{$70\times 70$}&{$40\times 40$}&{$50\times 50$}&{}&{ $64\times 64$}&{$70\times 70$}&{$70\times 70$}&{$100\times 100$}\cr
			\cline{1-11}  
			\multirow{5}{*}{$10^3$}&$u1_{max}$	    &3.653    &3.654    &3.640    &3.646    &3.649    &-       &3.68    &-      &3.489          \\
			                       &$u2_{max}$	    &3.711    &3.698    &3.697    &3.697    &3.697 	&-       &3.73    &3.686  &3.69         \\
			                       &$\overline{Nu}$ &1.118    &1.118    &1.118    &1.118    &1.118    &-       &1.074   &1.117  &1.117     \\
			                       &$Nu_{max}$      &1.506    &1.506    &1.560    &1.506    &1.505    &-       &1.47    &-      &1.501    \\
			                       &$Nu_{min}$      &0.691    &0.691    &0.691    &0.691    &0.692    &-       &0.623   &-      &0.691    \\
			\hline
			\multirow{5}{*}{$10^4$}&$u1_{max}$	    &16.227   &16.188   &16.183   &16.180   &16.178	&16.19   &16.10   &-      &16.122     \\
			                       &$u2_{max}$	    &19.744   &19.611   &19.600   &16.628   &19.617	&19.63   &19.90   &19.63  &19.79      \\
			                       &$\overline{Nu}$ &2.243    &2.244    &2.245    &2.245    &2.243    &-       &2.084   &2.243  &2.254    \\
			                       &$Nu_{max}$      &3.528    &3.530    &3.531    &3.531    &3.528    &-       &3.47    &-      &3.579     \\
			                       &$Nu_{min}$      &0.585    &0.585    &0.585    &0.585    &0.586    &-       &0.4968  &-      &0.577    \\
			\hline
			\multirow{5}{*}{$10^5$}&$u1_{max}$	    &34.829   &34.771   &34.715    &34.702  &34.81	&34.74   &34.00   &-      &34.00      \\
			                       &$u2_{max}$	    &69.049   &68.736   &67.875    &68.290  &68.22   &68.48   &70.00   &68.85  &70.63      \\
			                       &$\overline{Nu}$ &4.515    &4.519    &4.522     &4.522   &4.519   &-       &4.30    &4.521  &4.598     \\
			                       &$Nu_{max}$      &7.701    &7.713    &7.716     &7.720   &7.717   &-       &7.71    &-      &7.945    \\
			                       &$Nu_{min}$      &0.726    &0.727    &0.728     &0.728   &0.729   &-       &0.614   &-      &0.698    \\
			\hline
			\multirow{5}{*}{$10^6$}&$u1_{max}$	    &64.977   &64.710   &64.835    &64.541  &64.63	&64.81   &65.40   &-      &65.40      \\
			                       &$u2_{max}$	    &217.307  &221.534  &208.237   &220.609 &219.36  &220.46  &228     &221.6  &227.11    \\
			                       &$\overline{Nu}$ &8.797    &8.813    &8.825     &8.825   &8.800   &-       &8.743   &8.806  &8.976    \\
			                       &$Nu_{max}$      &17.676   &17.511   &17.462    &17.536  &17.925  &-       &17.46   &-      &17.86     \\
			                       &$Nu_{min}$      &0.970    &0.976    &0.980     &0.979   &0.989   &-       &0.716   &-      &0.913     \\
			\hline
			\multirow{5}{*}{$10^7$}&$u1_{max}$	    &154.770  &148.802  &148.454   &148.596 &145.267*&148.40  &139.7   &-      &143.56     \\
			                       &$u2_{max}$	    &819.329  &695.512  &703.702   &707.696 &703.253*&694.14  &698     &702.3  &714.48    \\
			                       &$\overline{Nu}$ &16.564   &16.484   &16.522    &16.521  &-       &-       &13.99   &16.40  &16.656   \\
		                           &$Nu_{max}$      &47.155   &40.374   &40.935    &40.329  &41.025* &-       &30.46   &-      &38.6      \\
			                       &$Nu_{min}$      &1.359    &1.353    &1.363     &1.367   &1.380*  &-       &0.787   &-      &1.298    \\
			\hline

		\end{tabular}
	 \begin{tablenotes}
		\footnotesize
		\item[1] The benchmark solutions with * were mentioned in \cite{Mayne2000h-adaptive} when $Ra=10^7$.
	\end{tablenotes}
	}
\end{threeparttable}
\end{table}

\section{Conclusions}
In this paper, we have developed a class of weak Galerkin finite element methods with globally divergence-free velocity approximation  for the steady-state
natural convection problems.  Well-posedness  of the discrete scheme is analyzed, and  optimal error estimates for the velocity, temperature and pressure approximations are derived. The  proposed Oseen's  iteration algorithm is   unconditionally convergent.  Numerical experiments verify the theoretical results.

\section*{Acknowledgments}
This work was supported by  National Natural Science Foundation of China (11771312) and Major Research Plan of National Natural Science Foundation of China (91430105).


\end{document}